\documentclass[10pt,a4paper]{article}

\usepackage{amsmath,amssymb,amsthm,mathrsfs}
\usepackage{stmaryrd}  
\usepackage{graphicx}
\usepackage{xcolor}
\usepackage{soul}
\usepackage{cancel}
\usepackage{changes}
\newcommand{\stkout}[1]{\ifmmode\text{\sout{\ensuremath{#1}}}\else\sout{#1}\fi}
\setdeletedmarkup{\stkout{#1}}
\usepackage{stmaryrd}

\usepackage{fullpage}
\usepackage{todonotes}
\usepackage{hyperref}
\definechangesauthor[name={Xiaonyu}, color=blue]{XN}

\newlength{\defbaselineskip}
\newcommand{\setlinespacing}[1]%
           {\setlength{\baselineskip}{#1 \defbaselineskip}}

\theoremstyle{plain}
\newtheorem{theorem}{Theorem}[section]
\newtheorem{lemma}[theorem]{Lemma}
\newtheorem{proposition}[theorem]{Proposition}
\newtheorem{corollary}[theorem]{Corollary}

\theoremstyle{definition}
\newtheorem{definition}[theorem]{Definition}

\newtheorem{ass}[theorem]{Assumption}

\theoremstyle{remark}
\newtheorem{remark}[theorem]{Remark}

\numberwithin{equation}{section}

\DeclareMathOperator*{\esssup}{ess\,sup}

 \allowdisplaybreaks
\begin{document}
\title{Stochastic Control Problems with Infinite Horizon and Regime Switching Arising in Optimal Liquidation with Semimartingale Strategies
}

\author{Xinman Cheng\footnote{Department of Applied Mathematics, The Hong Kong Polytechnic University, Kowloon,
		Hong Kong; email:xinman.cheng@connect.polyu.hk} \quad Guanxing Fu\footnote{Department of Applied Mathematics and Research Centre for Quantitative Finance, The Hong Kong Polytechnic University, Kowloon, Hong Kong; email: guanxing.fu@polyu.edu.hk. The author acknowledges financial support from the Hong Kong Research Grants Council (GRF Grant No. 15218825) and The Hong Kong Polytechnic University (Grant No. P0045668).}  \quad  Xiaonyu Xia\footnote{College of Mathematics and Physics, Wenzhou University, Wenzhou 325035, China; email: xiaonyu.xia@wzu.edu.cn. The author acknowledges financial support from the Natural Science Foundation of Zhejiang Povince (Grant No. LMS25A010012), the National Natural Science Foundation of China (Grants No. 12101465 and 12271391), and The Hong Kong Polytechnic University (Grant No. P0045668) during her visit.}
}

\maketitle

\begin{abstract}
 We study an optimal control problem on infinite time horizon with semimartingale strategies, random coefficients and regime switching. The value function and the optimal strategy can be characterized in terms of three systems of backward stochastic differential equations (BSDEs) with infinite horizon. One of them is a system of linear BSDEs with unbounded coefficients and infinite horizon, which seems to be new in literature. We establish the existence of the solutions to these BSDEs by BMO analysis and comparison theorem for multi-dimensional BSDEs. Next, we establish that the optimal control problem is well posed, in the sense that the value function is finite and the optimal strategy—when it exists—is unique. This is achieved by reformulating the cost functional as the sum of a quadratic functional and the candidate value function. The reformulation crucially relies on the well‑established well‑posedness results for systems of BSDEs. Finally, under additional assumptions, we obtain the unique optimal strategy.
\end{abstract}

{\bf AMS Subject Classification:} 93E20, 91B70, 60H30

{\bf Keywords:}{ infinite-horizon stochastic control, system of BSDEs with stochastic Lipschitz driver, regime switching, semimartingale strategy     }

\section{Introduction}

 Let $(\Omega, \mathcal F, \mathbb{P})$ be a fixed complete probability space on which we define a Brownian motion $W$ and a continuous-time stationary Markov chain $\alpha$ valued in a finite state space $\mathcal M = \left\{1, 2,\dots, \ell\right\}$ with $\ell \geq 1$. The Markov chain has a generator matrix $(q^{ij})_{\ell \times \ell}$ with $q^{ij} \ge 0$ for $i \neq j$ and $\sum^\ell_{j=1} q^{ij} = 0$ for each $i \in\cal M$. 
 We assume $W$ and $\alpha$ are independent of each other. We denote by $\mathbb F$ and $\mathbb G$ the augmented filtration generated by $(W,\alpha)$ and $W$, respectively.

In this paper, we study the following infinite-horizon stochastic control problem:
\begin{equation}\label{cost-inf-continuous}
	J(X)=\mathbb{E}\left[\int_{0}^\infty e^{-\phi s}\left(-Y_{s-}\,dX_s + \frac{\gamma_s}{2}\,d[X]_s - \sigma_s^{\alpha_s} \,d[X,W]_s+\lambda_s^{\alpha_s} X_s^2\,ds \right) \right]\rightarrow \min \quad \textrm{over }X,
\end{equation}
subject to the state dynamics
\begin{equation}\label{state-inf-continuous}
	\left\{ \begin{split}
		dY_s&=-\rho_s^{\alpha_s} Y_s\,ds-\gamma_s\,dX_s+\sigma_s^{\alpha_s}\,dW_s-d[\gamma,X]_s,\\
		X_{0-}&=x_0,\quad \,Y_{0-}=y_0,\quad \alpha_{0-}= i_0,
	\end{split} \right.
\end{equation}	
where $\gamma$ follows the dynamics
\begin{equation}\label{eq:gamma}
	d\gamma_s=\gamma_s\left(\mu_s  \,ds+\sigma_{1,s} \,dW_s\right),\quad \gamma_0> 0.\footnote{ Here we assume that the coefficients $\mu$ and $\sigma_1$ in this SDE \eqref{eq:gamma} are independent of the Markov chain $\alpha$ purely for simplicity of presentation. Since relaxing this assumption would introduce several extra terms into the candidate BSDE systems. While these additional terms would serve only to impose more restrictive conditions on the coefficients, rather than to change the underlying mathematical methodology.}
\end{equation}
The minimization in \eqref{cost-inf-continuous} is taken over all c\`adl\`ag semimartingales subject to some integrability conditions that will be specified later.

The control problem \eqref{cost-inf-continuous}-\eqref{state-inf-continuous} without regime switching and with constant $\gamma$ has been studied in \cite{CFX-2024}, motivated by an optimal liquidation problem with semimartingale strategies and infinite horizon, where $X$ is the position of the player and $Y$ is the price deviation process. 
Each term in the cost functional \eqref{cost-inf-continuous} is well interpreted by a micro-foundation; refer to \cite[Section 2.1]{CFX-2024}  and \cite[Section 4.1]{MuhleKarbe} for details. While optimal trade execution typically focuses on short horizons, Almgren \cite{almgren2012optimal} and Sch\"oneborn \cite{Schoneborn15} showed that removing temporal constraints fundamentally alters the optimal strategy, with certain model parameters even precluding full liquidation in finite time. Prompted by this funding, our recent work \cite{CFX-2024} investigates how strategies transform over extended horizons by analyzing the long-term behavior of a stochastic control problem with semimartingale strategies and external flows. 
 Similar models as \eqref{cost-inf-continuous}-\eqref{state-inf-continuous} without regime switching and with finite horizon are studied in Ackermann et al. \cite{AKU-2021, AKU-2022,AKU-2025} in different settings, with and without risk aversion, single asset or multiple asset.

The candidate value function and the optimal strategy can be characterized by three systems of infinite‑horizon BSDEs, one of which is a linear system with stochastic Lipschitz drivers that appears to be new to the literature. Our first contribution is to establish an existence result for a general class of infinite‑horizon BSDE systems with stochastic Lipschitz drivers, which is of independent interest.
To this end, we first truncate both the unbounded coefficients and the infinite time horizon. The resulting BSDE system has Lipschitz drivers on a finite horizon, and its well‑posedness follows from standard results in the literature (see, e.g., \cite[Theorem 4.1]{Briand03}). However, the corresponding solutions lie only in standard solution spaces, and the associated a priori estimates depend explicitly on the time horizon. This dependence prevents a direct passage to the infinite‑horizon limit. To overcome this difficulty, we construct a time‑weighted solution space by carefully choosing a discount factor so that the resulting a priori estimates are uniform with respect to the truncation index. To verify that solutions of the truncated system belong to this weighted space, we compare the system with a one‑dimensional BSDE, rather than an ODE as is commonly done in the regime‑switching literature (see, e.g., \cite{fu2025system,Hu-Liang-Tang}). This comparison requires a delicate BMO analysis, including the use of energy inequalities and reverse Hölder inequalities. We then let the truncation index tend to infinity and verify that the limit solves the original infinite‑horizon system.

Our second contribution is the resolution of the stochastic control problem \eqref{cost-inf-continuous}–\eqref{state-inf-continuous}. We first prove that the value function is finite, a fact that is not immediate from the original formulation of the cost functional \eqref{cost-inf-continuous}. Our approach begins with the finite‑horizon counterpart, where the cost can be rewritten as the sum of a nonnegative quadratic term and the candidate value function. The infinite‑horizon cost is then shown to arise as the limit of this decomposition, with convergence relying crucially on the well‑posedness results established in the first contribution. During the rewriting, to apply It\^o's formula for processes with regime switching (see \cite[Lemma 4.3]{Hu-Liang-Tang}), we must ensure that the relevant processes only jump as a result of regime‑switching. Motivated by \cite{AKU-2021,AKU-2022}, we reexpress the cost functional in terms of the process $P:=X+\frac{1}{\gamma}Y$, referred to as the scaled hidden deviation process. In our setting, all jumps of $P$ originate from the regime process $\alpha$. The existence of an optimal strategy is obtained under additional assumptions. The main challenge lies in proving admissibility of the candidate strategy. Again inspired by \cite{AKU-2021,AKU-2022}, we estimate a process related to 
$P$ rather than $X$ directly. Unlike \cite{AKU-2021}, the corresponding SDE involves unbounded linear coefficients. We eliminate this difficulty via a suitable linear transformation, yielding an SDE with coefficients expressed through a stochastic exponential, for which effective estimates can be derived.

Our work can be compared with two strands of the literature.
First, \cite{hu2024non} studies a linear–quadratic control problem with regime switching on a finite horizon. Their approach is based on a contraction mapping argument, which can be extended to the infinite‑horizon setting, albeit under the additional assumption that a certain discount factor must dominate a quantity related to the transition rates $q^{ij}$. In contrast, our approach to the analysis of general systems of linear BSDEs on an infinite horizon does not rely on such assumptions. Further discussion and technical details can be found in the first author’s PhD thesis\cite{xinmanphd}. Second, our work is related to the literature on multidimensional BSDEs with infinite horizons and unbounded coefficients; see, for example, \cite{li2019general,li2023bsdes,li2024weighted,lishun2020p}. In particular, \cite{li2019general} establishes well‑posedness for multidimensional BSDEs under a weak stochastic monotonicity condition, a general growth condition in $y$, and a stochastic Lipschitz condition in 
$z$. The results in \cite{li2023bsdes} refine this framework by integrating continuous dependence and comparison theorems within a unified proof. More recently, \cite{li2024weighted} develops existence results in weighted 
$L^2$-spaces with random terminal times under similar stochastic monotonicity and Lipschitz conditions. A common feature of these works is the crucial assumption that
$\int_0^\infty (|u_t| + v_t^2) \,dt < \infty$ a.s.,
where $u$ and 
$v$ denote the (possibly stochastic) coefficients of $y$ and 
$z$, respectively; see \cite[Section 2]{li2019general}, \cite[Assumption (I3)]{li2023bsdes}, \cite[Inequality (1.10)]{li2024weighted}, and \cite[Assumptions (A4)–(A5)]{lishun2020p}. In our setting, however, a key observation is that the coefficient of $y$ in the infinite‑horizon BSDE depends explicitly on the transition rates $q^{ij}$, causing this integrability condition to fail. In addition, an earlier work by Papapantoleon et al.~\cite{papapantoleon18} studies a broader class of multidimensional BSDEs with unbounded random time horizons driven by general martingales, allowing for stochastic discontinuities in the filtration. Their model is formulated within a sophisticated and highly general framework, incorporating jumps and a general filtration. Nevertheless, when one abstracts from these structural features to focus on the core mechanism, their approach encounters a similar coefficient integrability issue to that identified in \cite{li2019general,li2023bsdes,li2024weighted,lishun2020p} when applied to our setting. As a result, these existing frameworks cannot be directly applied without imposing additional restrictive assumptions. Filling this gap requires the development of new methodologies tailored to our systems of infinite‑horizon BSDEs with unbounded coefficients. We also note that techniques developed for multidimensional BSDEs with unbounded coefficients on finite horizons generally do not extend to our infinite‑horizon framework. For example, \cite{delbaen2010harmonic} studies multidimensional SDEs and BSDEs on finite horizons, driven by semimartingales and continuous local martingales, respectively, under suitable sliceability conditions in the BMO space for unbounded coefficients. However, these conditions are insufficient for our infinite‑horizon analysis. The underlying difficulty again stems from the constant transition rates 
$q^{ij}$, which render the coefficient of 
$y$ non‑sliceable in the BMO space.

Before presenting the section‑by‑section summary, we briefly review some existing results on optimal liquidation under regime switching; see \cite{bian2016optimal,fu2025system,pemy2006optimal,pemy2008liquidation,siu2019optimal}. Among these works, \cite{pemy2006optimal} employs regime switching to model stock price dynamics and studies an optimal liquidation problem formulated as an optimal stopping problem. In \cite{pemy2008liquidation}, the authors further allow the regime‑switching intensities to depend on the trading intensity and analyze an infinite‑horizon liquidation problem using viscosity solution techniques. The work \cite{bian2016optimal} considers a finite‑horizon liquidation model incorporating both permanent and temporary price impact, again studied via a viscosity solution approach. In a discrete‑time setting, \cite{siu2019optimal} investigates an optimal liquidation problem in which regime switching is used to model stochastic order book resilience. More recently, \cite{fu2025system} analyzes a system of singular BSDEs arising from a liquidation problem, but restricts attention to the finite‑horizon case.
All of these studies differ from the focus of the present work, which centers on infinite‑horizon liquidation problems and their characterization through systems of BSDEs with unbounded coefficients.

The remainder of the paper is organized as follows. We conclude the introduction by introducing notation and stating the standing assumptions. Section \ref{sec:BSDEs} formulates the candidate BSDE systems associated with the control problem \eqref{cost-inf-continuous}–\eqref{state-inf-continuous}. Motivated by these systems, Section \ref{sec:general-BSDE} is devoted to the analysis of a general class of infinite‑horizon BSDE systems with unbounded coefficients and establishes the corresponding well‑posedness results. Finally, Section \ref{sec:control} proves the solvability of the original control problem.

\subsection{Notation, convention, assumption and admissible space}
\textbf{Notation.}
Denote the Euclidean norm by $|\cdot|$. Let $\mathcal S$ be an Euclidean space and $\mathcal P_{\mathbb H}(\mathcal S)$ be the space of all $\mathbb H\times[0,\infty)$-progressively measurable $\mathcal S$-valued stochastic processes, with $\mathbb H=\mathbb F$ and $\mathbb G$, respectively. For each $p>1$ and $K\in\mathbb R$, we define the following spaces:
\begin{equation*}
	\begin{split}
 	  M^{p,K}_{\mathbb H}(0,\infty;\mathcal S) =&~ \left\{   v\in\mathcal P_{\mathbb H}(\mathcal S):     \|v\|_{M^{p,K}}	:=  \mathbb E\left[\left(\int_0^\infty e^{-2Kt} |v_t|^2\, dt\right)^{p/2}\right]^{1/p}<\infty 				\right\},\\
	S^{p,K}_{\mathbb H}(0,\infty;\mathcal S)=&~\left\{    v\in\mathcal P_{\mathbb H}(\mathcal S):  	\| v \|_{p,K}	:=	\mathbb E\left[\sup_{t\geq 0}e^{-pKt}|v_t|^p\right]^{1/p}<\infty	   \right\},\\
	L^{\infty,K}_{\mathbb H}(0,\infty;\mathcal S)=&~\left\{  v\in\mathcal P_{\mathbb H}(\mathcal S):   
\|v\|_{\infty,K}:= \mathop{\text{ess sup}}\limits_{(t,\omega)\in [0,\infty)\times\Omega} e^{-Kt}|v_t| <\infty \right\}.
	\end{split}
\end{equation*}
Let $\mathcal T$ be the space of all $[0,\infty)$-valued stopping times. We say the process $\left(\int_0^t e^{-Ks} v_s \,dW_s\right)_{t\geq 0}$ is a BMO martingale, if it satisfies 
\[
	\|v\|_{\text{BMO},K}:=\sup_{\tau\in\mathcal T}\left(\mathop{\text{ess sup}}\limits_{\omega\in \Omega}\mathbb E_\tau\left[\int^\infty_\tau e^{-2Ks} |v_s|^2\,ds\right]\right)^\frac{1}{2}<\infty,
\]
where for notational convenience, we use $\mathbb E_t[\cdots]$ to denote conditional expectation with respect to any filtration; the intended filtration will be clear from context. The space of all such processes $v\in \mathcal P_{\mathbb H}(\mathcal S)$ is denoted by $H^{2,K}_{\text{BMO},\mathbb H}(0,\infty;\mathcal S)$.

 Note that the constant $K$ in the above definitions of spaces can be either positive or negative.

The Dol\'eans-Dade stochastic exponential of $\int_0^\cdot v_s\,dW_s$ is defined as
\begin{equation*}
	\mathcal{E}\left(\int_0^\cdot v_s\,dW_s \right)_{t} = \exp\left\{\int_{0}^{t} v_s dW_s - \frac{1}{2} \int_{0}^{t} |v_s|^2 ds\right\},
\end{equation*}
which is denoted by $\mathcal E(v)$ for notational convenience.

\textbf{Convention.}
\begin{enumerate}
	\item When the filtration, time horizon, and base space are clear from context, or when we do not want to emphasize the base space, we omit them from the space notation. For instance, we write $L^{\infty,K}$ and $H^{2,K}_{\mathrm{BMO}}$ as shorthand for $L^{\infty,K}_{\mathbb{H}}(0, \infty; \mathcal{S})$ and $H^{2,K}_{\mathrm{BMO}, \mathbb{H}}(0, \infty; \mathcal{S})$, respectively, when $\mathbb{H}$, $[0, \infty)$, and $\mathcal{S}$ are understood from context.
	
	\item The symbol $c$ denotes a positive constant that may vary from line to line, but is always independent of any sequence index tending to infinity.
	
	\item In subsequent estimates, we often omit vector indices by appealing to the Euclidean norm. For example, for a vector \( v := (v^1, \dots, v^\ell) \), we frequently use the inequality
	\[
	|v^i| \leq |v| = \left( \sum_{j=1}^\ell (v^j)^2 \right)^{1/2}
	\]
	without explicitly stating it.
\end{enumerate}
The following standing assumptions are assumed to hold throughout the paper.	
\begin{ass}\label{ass:standing} 
	The discount factor $\phi$ is a nonnegative constant. For each $i\in\cal M$, assume that $\sigma^i$, $\rho^i$, $\lambda^i$, $\mu$ and $\sigma_1$ belong to $\mathcal P_{\mathbb G}(\mathbb R)$. Moreover,
	there exist positive constants $\beta$, $\hat c$, $L$ and $\epsilon$ such that for any $t\in[0,\infty)$ and $ i\in\cal M$, it holds
\begin{enumerate}

	\item[(i)]	 $|\rho^i_t| \leq  \hat c$ and $0\leq \frac{\lambda^i_t}{\gamma_t}\leq \hat c$,

	\item[(ii)] $|\mu_{t}|\leq \hat c e^{-Lt}$ and $|\sigma_{1,t}|\leq \hat ce^{-Lt}$ and $|\widetilde\sigma^i_t|\leq \hat c e^{-\frac{\beta t}{2}}$ where $\widetilde\sigma^i_t:= e^{-\frac{\phi t}{2}}\sigma^i_t$,  

	\item[(iii)] $\frac{\phi}{2}+\rho^i_t+\frac{\mu_t}{2}-\frac{\sigma_{1,t}^2}{2}\geq \epsilon$, $\rho^i_t+\mu_t-\sigma_{1,t}^2\geq 0$ and  $\frac{\phi}{2} +  \frac{
		\frac{\lambda_t^{i}}{\gamma_t}(\rho_t^{i}+\mu_t) }{  \frac{\mu_t}{2} + \rho_t^{i} +
		\frac{\lambda_t^{i}}{\gamma_t} + \frac{\phi}{2}} \geq \epsilon$.
\end{enumerate}
\end{ass}
{\bf The space of admissible strategies.}
The admissible space for \eqref{cost-inf-continuous}-\eqref{state-inf-continuous}, denoted by $\mathscr A$, is defined as all $X\in\mathcal P_{\mathbb F}(\mathbb R)$ satisfying 
\begin{enumerate}
	\item[(i)] $X$ is a c\`adl\`ag semimartingale;
	\item[(ii)] the following integrability is satisfied by $\widetilde X$
	\begin{equation}\label{integrability:X}
		\mathbb E\left[\sup_{t\geq 0} e^{2\varphi t}\gamma_t \widetilde{X}_t^2 \right]<\infty,
	\end{equation}
respectively,
\begin{equation}\label{integrability:P}
	\mathbb E\left[\sup_{t\ge 0} e^{4\varphi t} \gamma_t^2 \widetilde{P}_t ^4\right]<\infty,
\end{equation}
\end{enumerate}
where $0<\varphi<\epsilon\wedge \frac{\beta}{8}$, $\widetilde X_s:= e^{-\frac{\phi}{2}s}	X_s$, $\widetilde Y_s:= e^{-\frac{\phi}{2}s}Y_s$ and $\widetilde P_s:=\widetilde X_s+\frac{1}{\gamma_s}\widetilde Y_s$.

Due to the one-to-one correspondence between $X$ and $\widetilde X$, we may abuse the notation by writing $\widetilde X\in\mathcal A$ for simplicity, if $X$ satisfies (i) and $\widetilde X$ satisfies (ii).


\section{Candidate BSDE systems}\label{sec:BSDEs}
In this section, we formulate the system of backward stochastic differential equations (BSDEs) associated with the control problem under study. We begin by reformulating the optimal control problem \eqref{cost-inf-continuous}–\eqref{state-inf-continuous} in dynamic matrix form:
\begin{equation}\label{cost-t-infty}
	J^{}(t, \mathcal X_{t-},i ;\widetilde X)=\mathbb{E}_t\left[ \int_{t}^{\infty} \left(\mathcal X_{s}^\top\mathcal L d\widetilde{X}_s+ \mathcal R_s\,d[\widetilde{X}]_s +	\mathcal L^\top\mathcal{D}_{s}^{\alpha_s} \,d[\widetilde{X},W]_s+\mathcal X_{s}^\top \mathcal Q_s^{\alpha_s}\mathcal X_{s}\,ds \right)  \right],
\end{equation}
subject to the controlled state dynamics
\begin{equation}\label{state-matrix}
	d \mathcal X_{s} =\mathcal{H}_s^{\alpha_s}\mathcal X_s d s+\mathcal{D}_{s}^{\alpha_s} d W_{s}+\mathcal{I}_s d \widetilde X_{s}+\mathcal P_s d[\widetilde X, W]_s, \quad s \in [t, \infty),
\end{equation}
where $\mathcal X=(\widetilde X,\widetilde Y)^\top$. 
The coefficient matrices are defined as follows: 
\begin{equation}\label{eq:matrix-coeff}
	\begin{split}
		\mathcal L=&~ \begin{pmatrix}
			0 & -1 
		\end{pmatrix}^{\top},\qquad  \mathcal R_s=\frac{\gamma_s}{2},\qquad 	\mathcal{D}_{s}^{\alpha_s}= \begin{pmatrix}
			0 & \widetilde\sigma_s^{\alpha_s}  
		\end{pmatrix}^{\top},\qquad 
		\mathcal{I}_s    = \begin{pmatrix}
			1 & -\gamma_s 
		\end{pmatrix}^{\top} ,\\ 
		\mathcal Q_s^{\alpha_s}=&~ \begin{pmatrix}
			\lambda_s^{\alpha_s} & -\frac{\phi}{4}  \\
			-\frac{\phi}{4}  & 0
		\end{pmatrix},\qquad  
		\mathcal{H}_s^{\alpha_s} = \begin{pmatrix}
			0 & 0  \\
			-\frac{\phi}{2}\gamma_s & -\rho_s^{\alpha_s} -\frac{\phi}{2}
		\end{pmatrix}, \qquad 	\mathcal{P}_{s}= \begin{pmatrix}
			0 & -\gamma_s\sigma_{1,s}
		\end{pmatrix}^{\top}
		.
	\end{split}
\end{equation}
Using either a discrete-time approximation approach (see \cite{AKU-2021,FHX-2023}) or a transformation to a linear-quadratic control framework (see \cite{AKU-2022}), we now present three systems of infinite-horizon BSDEs that will play a fundamental role in establishing well-posedness and in characterizing both the value function and the optimal strategy.

$\bullet$  Let \( A := (A^i)_{i=1,\dots,\ell} \) be a vector of symmetric \( \mathbb{R}^{2 \times 2} \)-valued processes satisfying \( A^i_{11} = \gamma A^i_{21} \) and \( A^i_{12} = \gamma A^i_{22} + \frac{1}{2} \) for each \( i \in \mathcal{M} \). Define \( \bar{A}^i := A^i_{12} \), and suppose \( (\bar{A}^i)_{i \in \mathcal{M}} \) satisfies the following system of BSDEs on \( [0, \infty) \):
	\begin{equation}\label{BSDE:A-inf}
		\begin{aligned}
			-d\bar{A}^{i}_{s}=&~\left\{\sum^\ell_{j=1}q^{ij}\bar A_s^{j} + ( \mu_s -\phi) \bar A^{i}_{s} +\sigma_{1,s}Z^{\bar A,i}_s+\frac{\lambda_s^i}{\gamma_s}-\frac{\gamma_s}{a^i_s}\left((\mu_s + \rho_s^i)\bar A_s^{i}+\sigma_{1,s}Z^{\bar A,i}_s  +\frac{\lambda_s^i}{\gamma_s}\right)^2\right\}\,ds\\
			&~-Z^{\bar A,i}_{s}\,dW_s,
		\end{aligned} 
	\end{equation}
	with the transversality condition 
	\begin{equation}\label{trans:A}
		\lim\limits_{T\to\infty}   \mathbb E\left[ e^{-pK_{\bar A}T}|\bar A^i_T|^p\right]=0 \quad \text{ for any } K_{\bar A}>0,\, p\ge 1, \, i\in\cal M.
	\end{equation}
		 Here, the auxiliary coefficient $a$ that will be used frequently throughout the paper is defined as 
	\begin{equation}\label{def:a-i}
		a_t^{i}=\gamma_t\sigma_{1,t}^2\bar{A}_t^{i}+\frac{\mu_t\gamma_t}{2}-\frac{\gamma_t \sigma_{1,t}^2}{2}+\gamma_t\rho_t^i+\lambda_t^i+\frac{\phi\gamma_t}{2}.
	\end{equation}

$\bullet$ Let \( B := (B^i)_{i \in \mathcal{M}} = ((B^i_1, B^i_2)^\top)_{i \in \mathcal{M}} \) be a vector of \( \mathbb{R}^{2 \times 1} \)-valued processes satisfying \( B^i_1 = \gamma B^i_2 \). Define \( \bar{B}^i := B^i_1 \), and suppose \( (\bar{B}^i)_{i \in \mathcal{M}} \) satisfies the following system of BSDEs on \( [0, \infty) \):
	\begin{equation}\label{BSDE:B-inf}
		  \begin{aligned}
			-d\bar B^{i}_{s}=&~\left\{\sum^\ell_{j=1}q^{ij}\bar B_s^{j}-\frac{\phi}{2}\bar B^{i}_{s}+2\widetilde\sigma^i_s Z^{\bar A,i}_{s} -\frac{\gamma_s}{a^i_s} \left( (\mu_s+\rho_s^i)\bar A_s^{i}+\sigma_{1,s}Z^{\bar A,i}_s+\frac{\lambda_s^i}{\gamma_s}  \right) \right.\\
			&~\quad \times     \left(  (\rho_s^i +\mu_s-\sigma_{1,s}^2  ) \bar B^{i}_{s}+2  \sigma_{1,s}\widetilde\sigma_s^i\bar A_s^{i}- \sigma_{1,s}\widetilde\sigma_s^i +\sigma_{1,s}Z^{\bar B,i}_s \right)   \Bigg\}\,ds -Z^{\bar B,i}_{s}\,dW_s,
		\end{aligned} 
	\end{equation}
	with the transversality condition 
	\begin{equation}\label{trans:B}
		\lim\limits_{T\to\infty} \mathbb E\left[   e^{\frac{1}{4}\beta p T}|\bar B^i_T|^p \right]= 0 \quad 	\text{ for each }  p\geq 1 \text{ and }  i\in\mathcal  M.
	\end{equation}

$\bullet$ Let \( C := (C^i)_{i \in \mathcal{M}} \) be a vector of real-valued processes satisfying the following system of BSDEs:
	\begin{equation}\label{BSDE:C-inf}
		\begin{aligned}
			-d C^{i}_s=&~\left\{\sum^\ell_{j=1}q^{ij}C_s^{j} - \frac{1}{4a^i_s}\left(  (\rho_s^i  +\mu_s-\sigma_{1,s}^2   )\bar B^{i}_{s}+2 \sigma_{1,s}\widetilde\sigma_s^i\bar A_s^{i}- \sigma_{1,s}\widetilde\sigma_s^i + \sigma_{1,s}Z^{\bar B,i}_s \right)^2\right.\\
			&~\left.+(\widetilde\sigma_s^i)^2 \left(\frac{\bar A^{i}_{s}}{ \gamma_s}-\frac{1}{2\gamma_s}\right) -  \sigma_{1,s}\frac{\widetilde \sigma_s^i}{\gamma_s}\bar B_s^{i}+\frac{\widetilde\sigma^i_s}{\gamma_s}  Z^{\bar B,i}_{s}\right\}\,ds-Z^{C,i}_s\,dW_s,
		\end{aligned} 
	\end{equation}
 with the transversality condition 
 \begin{equation}\label{trans:C}
 		\lim\limits_{T\to\infty} \mathbb E\left[e^{\frac{\beta}{4}T}|C^i_T|^2\right]= 0\quad \text{for each }i\in\cal  M,
 \end{equation}
 where $\beta$ is the constant appearing in Assumption \ref{ass:standing}.

In Section \ref{sec:convergence-BC}, we will prove the systems of infinite horizon BSDEs \eqref{BSDE:A-inf}-\eqref{trans:C} admit one solution. 

\section{A general system of infinite horizon BSDEs with stochastic Lipschitz drivers}\label{sec:general-BSDE}

Motivated by the systems of BSDEs introduced in Section~\ref{sec:BSDEs}, particularly the infinite-horizon systems \eqref{BSDE:B-inf}-\eqref{trans:C}, we study a general class of infinite-horizon BSDEs with stochastic Lipschitz drivers. The systems \eqref{BSDE:B-inf}, \eqref{BSDE:C-inf} will appear as a special case of this general framework. The results presented in this section are of independent interest. Before stating the main theorem, we first state several preliminary lemmas.
\subsection{Preliminary lemmas} 
\begin{lemma}\label{lemma:zeta-BMO}
Assume $\zeta\in H^{2,K_{\zeta}}_{\text{BMO}}$, where $K_\zeta$ is a given constant.
For any $b_1\in(0,2)$, $b_2>0$ and $b_3>K_{\zeta}$, it holds that 
	\begin{equation}\label{inequ:zeta}
		\begin{split}
	\left\|  e^{-b_1 b_3\cdot} |\zeta_\cdot|^{b_1} \right\|_{b_2,exp} :=&~\esssup_{\omega\in\Omega,\tau\in\cal T}\mathbb E _\tau\left[\exp\left(b_2\int^\infty_\tau e^{-b_1b_3s}|\zeta_s|^{b_1}\,ds\right)\right] \\
	\leq&~ \frac{4}{3}\exp\left\{        \left(    \frac{1}{2b_1N^2}   \right)^{ -\frac{b_1q}{2} }    \frac{b_2^q}{   b_1q^2(b_3-K_\zeta)   }			\right\},
	\end{split}	
\end{equation}
	where  $q$ is the conjugate of $\frac{2}{b_1}$, and $N$ is any positive constant such that 
		\begin{equation}\label{inequ:zeta-BMO}
		\left\|\zeta\right\|_{\text{BMO},K_\zeta}\leq N.
	\end{equation}
\end{lemma}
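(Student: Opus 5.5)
The plan is to split the integrand with Young's inequality into a part controlled by the BMO norm of $\zeta$ and a purely deterministic part. The quadratic part will then be handled by the John--Nirenberg (energy) inequality for BMO martingales.

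\textbf{Step 1: Young's inequality.} Write
\[
e^{-b_1b_3s}|\zeta_s|^{b_1}=\bigl(e^{-b_1K_\zeta s}|\zeta_s|^{b_1}\bigr)\, e^{-b_1(b_3-K_\zeta)s}.
\]
Apply Young's inequality with exponents $p=\frac{2}{b_1}>1$ and $q=\frac{p}{p-1}$, together with a free parameter $\varepsilon>0$:
\[
b_2\, x y\le \frac{(\varepsilon x)^{p}}{p}+\frac{(b_2 y/\varepsilon)^q}{q},
\qquad x=e^{-b_1K_\zeta s}|\zeta_s|^{b_1},\quad y=e^{-b_1(b_3-K_\zeta)s}.
\]
Since $x^p=e^{-2K_\zeta s}|\zeta_s|^2$, this gives, for every $\tau\in\mathcal T$,
\[
b_2\int_\tau^\infty e^{-b_1b_3s}|\zeta_s|^{b_1}\,ds\le \frac{\varepsilon^{2/b_1}b_1}{2}\int_\tau^\infty e^{-2K_\zeta s}|\zeta_s|^2\,ds+\frac{b_2^q\varepsilon^{-q}}{q}\int_\tau^\infty e^{-b_1q(b_3-K_\zeta)s}\,ds .
\]
Because $b_3>K_\zeta$, the last integral is at most $\frac{1}{b_1q(b_3-K_\zeta)}$, uniformly in $\tau$ and $\omega$.

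\textbf{Step 2: choice of $\varepsilon$.} Choose $\varepsilon$ so that the coefficient of the quadratic term equals $\frac{1}{4N^2}$, that is, $\varepsilon^{2/b_1}=\frac{1}{2b_1N^2}$. Then
\[
\varepsilon^{-q}=\Bigl(\frac{1}{2b_1N^2}\Bigr)^{-\frac{b_1q}{2}},
\]
and the deterministic term becomes exactly the exponent on the right-hand side of \eqref{inequ:zeta}. Pulling this constant out of $\mathbb E_\tau[\exp(\cdot)]$ reduces the claim to
\[
\mathbb E_\tau\Bigl[\exp\Bigl(\tfrac{1}{4N^2}\int_\tau^\infty e^{-2K_\zeta s}|\zeta_s|^2\,ds\Bigr)\Bigr]\le \tfrac43 .
\]

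\textbf{Step 3: energy inequality.} Set $A_t=\int_0^t e^{-2K_\zeta s}|\zeta_s|^2\,ds$. By \eqref{inequ:zeta-BMO} we have $\mathbb E_\sigma[A_\infty-A_\sigma]\le N^2$ for all stopping times $\sigma$. The standard energy inequality (Garsia's lemma) then gives
\[
\mathbb E_\tau[(A_\infty-A_\tau)^k]\le k!\,N^{2k}.
\]
To prove it, expand $(A_\infty-A_\tau)^k=k\int_\tau^\infty (A_\infty-A_s)^{k-1}\,dA_s$, condition at time $s$, and induct on $k$. Summing the exponential series with $\eta=\frac{1}{4N^2}$ yields $\mathbb E_\tau[e^{\eta(A_\infty-A_\tau)}]\le \frac{1}{1-\eta N^2}=\frac43$. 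Taking the essential supremum over $\omega$ and $\tau$ completes the proof.

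The main care point is Step 3: the induction must be carried out on the infinite horizon, where $A_\infty<\infty$ a.s. by the BMO bound. The interchange of conditional expectation and the $dA_s$-integral is justified by monotone convergence. The algebra in Step 2 is routine but must be matched exactly to the stated constant.
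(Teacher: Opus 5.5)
Your proof is correct and follows the paper's argument. You use the same Young's-inequality split with exponents $2/b_1$ and $q$, and your weight $\varepsilon=\bigl(\frac{1}{2b_1N^2}\bigr)^{b_1/2}$ is exactly the paper's factor $\bigl(\frac{2}{b_1}\cdot\frac{1}{4N^2}\bigr)^{b_1/2}$, followed by the John--Nirenberg bound $\frac{4}{3}$; the only difference is that you rederive that bound from the energy inequality (Lemma \ref{lemma:energy}) by summing the exponential series, whereas the paper cites Kazamaki's Theorem 2.2 directly, whose proof is precisely that argument.
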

\begin{proof}  
	Given $\zeta\in H^{2,K_{\zeta}}_{\text{BMO}}$ and \eqref{inequ:zeta-BMO},
by \cite[Theorem 2.2]{Kazamaki-1994} we have for each $\tau\in\mathcal T$
	\[
	\mathbb E_\tau\left[\exp\left(\frac{1}{4N^2}\int^\infty_\tau e^{-2K_{\zeta}s}|\zeta_s|^2\,ds\right)\right]\leq \frac{1}{1-\|\int^\cdot_0\frac{1}{2N}e^{-K_{\zeta}s}\zeta_s\,dW_s\|^2_{\text{BMO}}}\leq \frac{1}{1-\frac{1}{4}}=\frac{4}{3}.
	\]
	
	Thus, by Young's inequality we have 
	\[
	\begin{aligned}
		&~\sup_\tau\mathbb E_\tau\left[\exp\left\{ b_2 \int^\infty_\tau e^{-b_1b_3 s}|\zeta_s|^{b_1}\,ds\right\}\right]\\
		=&~\sup_\tau \mathbb E_\tau\left[\exp\left\{\int^\infty_\tau \left(\left(\frac{2}{b_1}\cdot \frac{1}{4N^2}\right)^{\frac{b_1}{2}}     e^{-b_1K_\zeta s} |\zeta_s|^{b_1}   \times     \left(\frac{2}{b_1} \cdot  \frac{1}{4N^2}    \right)^{-\frac{b_1}{2}}b_2 e^{-b_1(b_3-K_\zeta)s}\right)\,ds\right\}\right] \\
		\leq &~\sup_\tau \mathbb E_\tau\left[\exp\left\{\int^\infty_\tau \left(   \frac{1}{4N^2}e^{-2K_\zeta s}|\zeta_s|^2 +\frac{1}{q}\left(\frac{2}{b_1}\cdot \frac{1}{4N^2}\right)^{-\frac{b_1q}{2}} b_2^q e^{-b_1q(b_3-K_\zeta)s}\right)\,ds\right\}\right]\quad \left(\text{where }\frac{1}{q}+\frac{1}{2/b_1}=1\right)\\
		\leq &~
		\frac{4}{3}\exp\left\{        \left(    \frac{1}{2b_1N^2}   \right)^{ -\frac{b_1q}{2} }    \frac{b_2^q}{   b_1q^2(b_3-K_\zeta)   }			\right\}.
	\end{aligned}\]
\end{proof}

The following lemma can be found in \cite[Page 29]{Kazamaki-1994}.
\begin{lemma}[Energy inequality]\label{lemma:energy}
	Assume $Z\in H^{2,0}_{\text{BMO}}$. For any constant $n\in\mathbb N_+$, it holds that 
	\begin{equation}\label{eq:BMO}
		\esssup_{\omega,\tau}\mathbb E_\tau\left[\left(\int^\infty_\tau|Z_s|^2\,ds\right)^n\right]\leq n!\left\| Z \right\|^{2n}_{\text{BMO}}.
	\end{equation}
\end{lemma}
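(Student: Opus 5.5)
Set $M_t:=\int_0^t Z_s\,dW_s$ and $\|Z\|:=\|Z\|_{\text{BMO}}$, and for a stopping time $\tau$ write $A_t:=\int_\tau^{t}|Z_s|^2\,ds$ for $t\ge\tau$, with $A_\infty:=\int_\tau^\infty|Z_s|^2\,ds$. The statement \eqref{eq:BMO} says $\mathbb E_\tau[A_\infty^n]\le n!\|Z\|^{2n}$. I would prove this by induction on $n$, using the pathwise identity
\[
A_\infty^n = n\int_\tau^\infty (A_\infty - A_s)^{n-1}\,dA_s ,
\]
which holds because $A$ is continuous and nondecreasing. Differentiating $s\mapsto -(A_\infty-A_s)^n$ gives it directly.

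\textbf{Base case.} For $n=1$ the bound is just the definition of the BMO norm: $\mathbb E_\tau[A_\infty]\le \|Z\|^2$ a.s. for every stopping time $\tau$.

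\textbf{Induction step.} Assume $\esssup_{\omega,\sigma}\mathbb E_\sigma[(\int_\sigma^\infty|Z_s|^2ds)^{n-1}]\le (n-1)!\|Z\|^{2(n-1)}$ for every stopping time $\sigma$. Taking $\mathbb E_\tau$ of the identity above, the key step is to move a conditional expectation inside the $dA_s$-integral:
\[
\mathbb E_\tau\left[\int_\tau^\infty (A_\infty-A_s)^{n-1}\,dA_s\right] = \mathbb E_\tau\left[\int_\tau^\infty \mathbb E_s\big[(A_\infty-A_s)^{n-1}\big]\,dA_s\right].
\]
This holds because $A$ is an adapted continuous increasing process, so its integral against a nonnegative process equals its integral against that process's optional projection. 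Here the optional projection of $(A_\infty-A_s)^{n-1}$ is $\mathbb E_s[(\int_s^\infty|Z_u|^2du)^{n-1}]$. By the induction hypothesis, applied at the deterministic time $s$ (or at stopping times, to avoid null-set issues when taking the essential supremum), this projection is bounded by $(n-1)!\|Z\|^{2(n-1)}$. Substituting this bound and then using the base case gives
\[
\mathbb E_\tau[A_\infty^n]\le n\,(n-1)!\,\|Z\|^{2(n-1)}\,\mathbb E_\tau[A_\infty]\le n!\,\|Z\|^{2n}.
\]
Since $\tau$ was arbitrary, taking the essential supremum over $\omega$ and $\tau$ gives \eqref{eq:BMO}.

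\textbf{Main obstacle: integrability.} The one point needing care is justifying the projection step when $A_\infty^n$ is not yet known to be integrable. I would handle this by localization. Replace $\infty$ by a finite horizon $T$ and, if needed, truncate $A$ at level $m$ via the stopping time $\inf\{t: A_t\ge m\}$. Every quantity is then bounded and the argument above applies verbatim; in the truncated setting the conditional expectation of the truncated tail is still bounded by the full tail, so the induction hypothesis remains usable. The resulting bound is uniform in $T$ and $m$, so letting $m, T\to\infty$ and applying monotone convergence yields the claim. Alternatively, one can cite Kazamaki's energy inequality directly, as the paper indicates.
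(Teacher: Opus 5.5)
Your argument is correct, and it is essentially the standard proof of the energy inequality in Kazamaki's book; the paper does not reprove the lemma and simply cites Kazamaki. That proof writes $A_\infty^n=n\int_\tau^\infty (A_\infty-A_s)^{n-1}\,dA_s$, moves the conditional expectation inside the $dA_s$-integral via the optional projection, and inducts. Your localization step is harmless but not needed: all integrands are nonnegative, so the projection identity holds with values in $[0,\infty]$, and since $dA_s=|Z_s|^2\,ds$ is absolutely continuous, the bound at deterministic times already suffices by Fubini.
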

The following reverse H\"older inequality can be found in \cite[Theorem 3.1]{Kazamaki-1994}. 
\begin{lemma}[Reverse H\"older inequality]\label{lemma:reverse}
	Let $\Phi$ be the function on $(1, \infty)$ defined as 
	\begin{equation*}
		\Phi(x) = \left( 1 + \frac{1}{x^{2}} \log \frac{2x-1}{2(x-1)} \right)^{1/2} - 1,
	\end{equation*}
which is nonincreasing, $\lim_{x \searrow 1} \Phi(x) = \infty$ and $\lim_{x \nearrow \infty} \Phi(x) = 0$.
Let $Z\in H^{2,0}_{\text{BMO}}$ and $q_*=\Phi^{-1}(\|Z\|_{\text{BMO}})$. For any $1<q<q_*$, it holds that 
\[
		\esssup_{\tau\in\cal T}\mathbb E_\tau\left[ 	\left|\frac{\mathcal E(Z)_\infty}{\mathcal E(Z)_\tau}\right|^q		   \right]\leq \mathcal K(q,\| Z \|_{\text{BMO}}),
\]
where
	\begin{equation*}
	\mathcal K(q, O) = \frac{2}{1 - \frac{2(q-1)}{2q-1 } \exp\left\{ q^{2}(O^{2}+2O)\right\}  }.
\end{equation*}

\end{lemma}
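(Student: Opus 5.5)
The plan is to work with the continuous BMO martingale $M:=\int_0^\cdot Z_s\,dW_s$ and write $\mathcal E:=\mathcal E(Z)$, $O:=\|Z\|_{\text{BMO}}$. Since $\mathcal E$ is a positive continuous local martingale, it is a supermartingale, so $\mathcal E_\infty$ exists and $\mathbb E_\tau[\mathcal E_\infty/\mathcal E_\tau]\le 1$. Since $M$ is BMO, $\mathcal E$ is in fact a uniformly integrable martingale by Kazamaki's criterion. The statement is a conditional $L^q$ bound for the ratio $\mathcal E_\infty/\mathcal E_\tau$, uniform in $\tau$. I would follow Kazamaki's strategy: first prove a uniform distributional ("good-$\lambda$") estimate for this ratio, then integrate it against $q\lambda^{q-1}\,d\lambda$.

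\textbf{Step 1: a uniform lower bound on the ratio.} For every stopping time $\tau$, the first target is
\[
\mathbb E_\tau\Bigl[\Bigl(\frac{\mathcal E_\infty}{\mathcal E_\tau}\Bigr)^{-\frac{1}{p-1}}\Bigr]\le C(p,O)
\]
for a suitable $p>1$. This comes from writing
\[
\Bigl(\frac{\mathcal E_\infty}{\mathcal E_\tau}\Bigr)^{-a}=\exp\Bigl\{-a(M_\infty-M_\tau)+\frac a2\bigl(\langle M\rangle_\infty-\langle M\rangle_\tau\bigr)\Bigr\}.
\]
I would apply Cauchy--Schwarz to split this into a stochastic exponential factor, whose conditional expectation is at most $1$, and a factor $\exp\{c(a)(\langle M\rangle_\infty-\langle M\rangle_\tau)\}$. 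The second factor is controlled by the John--Nirenberg inequality (the same result from \cite{Kazamaki-1994} used in the proof of Lemma \ref{lemma:zeta-BMO}) or by Lemma \ref{lemma:energy}. The outcome is that $\mathcal E$ satisfies a Muckenhoupt-type $(A_p)$ condition uniformly in $\tau$.

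\textbf{Step 2: good-$\lambda$ estimate.} For $\lambda>1$, set $\sigma_\lambda:=\inf\{t\ge\tau:\mathcal E_t>\lambda\mathcal E_\tau\}$; continuity gives $\mathcal E_{\sigma_\lambda}=\lambda\mathcal E_\tau$ on $\{\sigma_\lambda<\infty\}$. Optional stopping and the strong Markov-type restart at $\sigma_\lambda$ then reduce the conditional tail $\mathbb E_\tau[\mathcal E_\infty/\mathcal E_\tau;\ \mathcal E_\infty>\beta\lambda\mathcal E_\tau]$ to a fraction of $\mathbb E_\tau[\mathcal E_\infty/\mathcal E_\tau;\ \sigma_\lambda<\infty]$. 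The fraction is governed by a bound on $\mathbb P_{\sigma_\lambda}(\mathcal E_\infty/\mathcal E_{\sigma_\lambda}\le 1/2)$. That bound is obtained from the exponential-moment estimate $\mathbb E_\sigma[\exp\{q^2(\langle M\rangle_\infty-\langle M\rangle_\sigma)+2q(M_\infty-M_\sigma)\}]\lesssim \exp\{q^2(O^2+2O)\}$. This is where the factor $\exp\{q^2(O^2+2O)\}$ in $\mathcal K$ should appear: $O^2$ comes from the quadratic variation and $2O$ from the linear term, which has BMO norm at most $O$.

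\textbf{Step 3: layer-cake summation.} Multiplying by $(q-1)\lambda^{q-2}$ and integrating over $\lambda\in(1,\infty)$ gives
\[
\mathbb E_\tau\Bigl[\Bigl(\tfrac{\mathcal E_\infty}{\mathcal E_\tau}\Bigr)^q\Bigr]\le 1+\frac{2(q-1)}{2q-1}\,e^{q^2(O^2+2O)}\,\mathbb E_\tau\Bigl[\Bigl(\tfrac{\mathcal E_\infty}{\mathcal E_\tau}\Bigr)^q\Bigr]+\cdots,
\]
with the constant tuned so that the right-hand side produces exactly the $\mathcal K$ in the statement. This inequality can be absorbed into the left-hand side provided $\frac{2(q-1)}{2q-1}e^{q^2(O^2+2O)}<1$. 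To make the absorption legitimate, I would run the argument with $\mathcal E$ stopped at a localizing sequence, so that all quantities are finite, and then pass to the limit by Fatou.

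\textbf{Step 4: link to $\Phi$.} The absorption condition is equivalent to $O<\Phi(q)$. Indeed, $\frac{2(q-1)}{2q-1}e^{q^2(O^2+2O)}<1$ means $(O+1)^2<1+\frac{1}{q^2}\log\frac{2q-1}{2(q-1)}$. Since $\Phi$ is nonincreasing, $q<q_*=\Phi^{-1}(O)$ gives exactly this.

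\textbf{Main obstacle.} The delicate step is Step 2: obtaining the sharp good-$\lambda$ constant $\frac{2(q-1)}{2q-1}e^{q^2(O^2+2O)}$ rather than some cruder one. If the sharp form resists, I would fall back on the Hölder/John--Nirenberg route. That route gives a reverse Hölder inequality with a different explicit constant, valid for $q$ in a smaller range, and I would then reconcile it with the stated $\mathcal K$ by citing \cite[Theorem 3.1]{Kazamaki-1994} directly.
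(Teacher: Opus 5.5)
The paper gives no proof of this lemma; it quotes it from \cite[Theorem 3.1]{Kazamaki-1994}. So your fallback of citing that theorem is exactly the paper's route, and your Step 4 is the check needed to apply it. (You need $\Phi$ to be \emph{strictly} decreasing to get $O<\Phi(q)$ from $q<q_*$; this does hold.)

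Your self-contained argument, however, has a genuine gap. The one concrete estimate in Step 2 is false. Let $\tau:=\inf\{t:|W_t|\ge a\}$ and $M:=W_{\cdot\wedge\tau}$. Then $\mathbb E_\sigma[\langle M\rangle_\infty-\langle M\rangle_\sigma]=a^2-W_{\sigma\wedge\tau}^2\le a^2$, so $O=a$. On the other hand, $\mathbb E[\exp\{q^2\tau+2qW_\tau\}]\ge e^{-2qa}\,\mathbb E[e^{q^2\tau}]=\infty$ as soon as $q^2a^2\ge\pi^2/8$. Since $\Phi(q)\to\infty$ as $q\downarrow 1$, this occurs with $O<\Phi(q)$, i.e.\ inside the range where the lemma must hold.

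The underlying reason is that the lemma covers BMO norms of any size. Exponential moments of $\langle M\rangle_\infty-\langle M\rangle_\sigma$ with a fixed positive coefficient can be infinite once $O$ is large, as the example shows. So a factor finite for every $O$, such as $e^{q^2(O^2+2O)}$, cannot come from such an upper bound. Note that $q^2(O^2+2O)$ is exactly what Jensen's inequality gives as a \emph{lower} bound for $\mathbb E_\sigma[\exp\{-q^2(\langle M\rangle_\infty-\langle M\rangle_\sigma)-2q^2|M_\infty-M_\sigma|\}]$, using $\mathbb E_\sigma[\langle M\rangle_\infty-\langle M\rangle_\sigma]\le O^2$ and $\mathbb E_\sigma|M_\infty-M_\sigma|\le O$. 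This suggests the factor enters as the reciprocal of a lower bound.

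Apart from this, Steps 2--3 never actually derive a good-$\lambda$ inequality. The unspecified ``$+\cdots$'' and the constant ``tuned'' to reproduce $\mathcal K$ mean the inequality of Step 3 is read off from the statement rather than proved. The $(A_p)$ bound of Step 1 is never used afterwards.

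Your fallback does not rescue even a weaker version of the lemma. Write $X:=\mathcal E(Z)_\infty/\mathcal E(Z)_\tau$. Then
$X^q=\bigl(\mathcal E(rqM)_\infty/\mathcal E(rqM)_\tau\bigr)^{1/r}\exp\{\tfrac{q(rq-1)}{2}(\langle M\rangle_\infty-\langle M\rangle_\tau)\}$.
Hölder plus John--Nirenberg therefore requires $\frac{rq(rq-1)}{2(r-1)}O^2<1$. Since $\frac{rq(rq-1)}{2(r-1)}>\frac{rq}{2}>\frac12$ for all $r,q>1$, this route yields no reverse H\"older inequality at all once $O\ge\sqrt2$.

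In the paper the lemma is applied in the proof of Lemma \ref{lemma:general-P} with $O=\|\widetilde g^m\|_{\text{BMO}}\le\|g\|_{\text{BMO}}$. No smallness of $\|g\|_{\text{BMO}}$ is assumed, so an inequality valid for every BMO martingale is indispensable.

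A Gehring-type self-improvement in which BMO enters only through lower bounds would supply this, with some constant. Concretely:
\begin{itemize}
\item Jensen gives $\mathbb E_\sigma[(\mathcal E_\infty/\mathcal E_\sigma)^{1/2}]\ge e^{-O^2/4}$.
\item Hence $\mathbb P_\sigma(\mathcal E_\infty/\mathcal E_\sigma>\beta)\ge\delta$, with $\beta,\delta>0$ depending only on $O$.
\item This yields $\mathbb E_\tau[X;X>\lambda]\le\frac{\lambda}{\delta}\mathbb P_\tau(X>\beta\lambda)$ for $\lambda\ge1$.
\item That inequality can be integrated against $(q-1)\lambda^{q-2}$ and absorbed for $q$ close to $1$.
\end{itemize}
This gives a constant and range different from $\mathcal K(q,O)$ and $q<q_*$. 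To obtain exactly $\mathcal K(q,O)$ for all $1<q<q_*$, you should do what the paper does and invoke \cite[Theorem 3.1]{Kazamaki-1994}.
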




\subsection{A general infinite-horizon BSDE system}\label{subsec:general-BSDE}

The goal of this section is to establish the existence of solutions to  the following infinite-horizon BSDE system:
	\begin{equation}\label{BSDE:Y-infinite}
		\left\{\begin{aligned}
			-dY^i_t=&\left[-\nu^i_t Y^i_t+\sum^\ell_{j=1}q^{ij}Y^j_t+\sum^\ell_{j=1}\kappa^{ij}_tY^j_t+(g^i_t)^\top Z^i_t+f^i_t\right]\,dt-(Z^i_t)^\top \,d\mathcal W_t,\\
			i\in&~ \cal M,
		\end{aligned}\right.
	\end{equation}
where $\mathcal W$ is a multidimensional Brownian motion.

The following assumptions on the coefficients of \eqref{BSDE:Y-infinite} are assumed to hold throughout Section \ref{subsec:general-BSDE}.
 \begin{ass}\label{ass:general-BSDE}
\begin{enumerate}
	\item[(i)]  Let $r\in(0,2)$, $\bar K\leq \bar c$ be fixed constants throughout this section\footnote{Notice that $\bar K$ and $\bar c$ can be either positive or negative.}. All coefficients are assumed to be $\mathscr F\times[0,\infty)$ progressively measurable, where $\mathscr F$ denotes the augmented natural filtration of the Brownian motion $\mathcal W$. 
	\item[(ii)] $g\in H^{2,0}_{\text{BMO}}$.
	
	\item[(iii)] Let $\kappa:=(\kappa^{ij})_{i,j\in\cal M}\in\mathcal P_{\mathscr F}(\mathbb R^{\ell\times\ell})$ satisfy $\kappa^{ij} \ge 0\text{ for }  i \neq j \footnote{ This condition is required for the monotonicity in comparison principle for multidimensional BSDEs. } \text{ and }|\kappa| \leq |\phi|^r$ where $\phi\in H^{2,K_\phi}_{\text{BMO}}$ for $K_\phi< 0$.
	\item[(iv)] Let  $\nu:=(\nu^i)_{i\in\cal M}\in\mathcal P_{\mathscr F}(\mathbb R^\ell)$ satisfy   $\bar K\leq\nu^{i}\leq \bar c  \text{ for each }i\in\cal M.$
	\item[(v)] 	Given a constant  $K_f<\bar K$,  $|f^{}|^{1/2}\in \bigcap_{p\geq 1}M^{p, K_f/2}$.

\end{enumerate}

\end{ass}

We will first establish the existence of solutions to a truncated version of \eqref{BSDE:Y-infinite}, and second, show that this solution converges to one solution to \eqref{BSDE:Y-infinite} with some transversality condition.

For each given constant $m>0$, define\footnote{We abuse the notation by letting $h_m$ denote functions on both $\mathbb R$ and $\mathbb R^\ell$, without confusion.} 
\[
	h_m (x) := \frac{\min \{| x |, m \}}{| x|} \cdot x \cdot \mathbf{1}_{\{ | x | \neq 0 \}}
\]
and consider the truncated version of \eqref{BSDE:Y-infinite}
	\begin{equation}\label{BSDE:general-k-P}
		\left\{\begin{aligned}
			-d Y^{m,i}_t=&~\left[-\nu^{i}_tY^{m,i}_t+\sum^\ell_{j=1}q^{ij} Y^{m,j}_t+\sum^\ell_{j=1}h_m(\kappa^{ij}_t) Y^{m,j}_t+h_m(g^{i}_t)^\top Z^{m,i}_t+ f^{i}_t\right]\,dt\\
			&~-(Z^{m,i}_t)^\top\,d\mathcal W_t,  \quad t\in[0,m),\\
			Y^{m,i}_t=&~0,\quad t\in[m,\infty),\\
			 i\in&~ \cal M.
		\end{aligned}\right.
	\end{equation}

The following lemma establishes both the existence of a solution to \eqref{BSDE:general-k-P} and an a priori estimate for that solution, uniformly in $m$.
\begin{lemma}\label{lemma:general-P}
	For each $m$, each $K\in (K_f,\bar K) $ and each $K'>K$, the system of infinite-horizon BSDEs \eqref{BSDE:general-k-P} admits a solution 
	\[
		(  Y^{m},  Z^{m}):=(  Y^{m,i},  Z^{m,i}  )_{i\in\mathcal M}   \in  \bigcap_{p\geq 1}S^{p, K}\times M^{2, K'}.
	\]  
	Moreover, for $Y^{m}$ we have the following estimate for any $p>2a$
	\begin{equation}\label{estimate:Y-Pik-2}
		\begin{split}
			\mathbb E\left[  \sup_{t\geq 0}\left| e^{-K t} Y^{m}_t\right|^p  \right] 
			\leq  c\mathcal K(q,\bar N)^\frac{p}{q} \left\| |\phi|^r\right\|^\frac{p}{2a}_{2a\ell,exp} \mathbb E\left[ \left( 	\int_0^\infty  e^{-K_fs} | f_s |\,ds	  \right)^p \right],
		\end{split}
	\end{equation}
	where $\bar N=\|   g \|_{\text{BMO}}$, $q>1$ and $a>0$ are two constants determined in the proof, and the function $\mathcal K$ is given in Lemma \ref{lemma:reverse}.
	For $Z^{m}$ we have the following estimate:
		\begin{equation}\label{estimate:Z-Pik-2}
			\begin{split}
					\|	  Z^{m}		\|^2_{M^{2,K'}}				
				\leq~c\| Y^{m} \|^2_{2,K}+ c\|Y^{m}\|^{2}_{4,K}\left(\|g\|^2_{\text{BMO}}+\left\|  \phi \right\|^{r}_{\text{BMO},K_\phi}\right)+c\| Y^{m} \|_{2,K}\left\|   |  f  |^{\frac{1}{2}} \right\|^2_{M^{4, K_f/2}}.
			\end{split}
	\end{equation}
If we further assume that $|f^{}|^{1/2}\in H^{2, K_f/2}_{\text{BMO}}$, then $(  Y^{m},  Z^{m})\in L^{\infty,K} \times H^{2,K'}_{\text{BMO}}$. We also have the following estimate
\begin{equation}\label{estimate:Y-Pik}
	\begin{split}
		\|Y^{m}\|_{\infty,K}\leq c \mathcal K(q,\bar N)^\frac{1}{q}\left\| |\phi|^r\right\|^\frac{1}{2a}_{2a\ell,exp}  \left\|	  |f|^{\frac{1}{2}}	\right\|^{2}_{\text{BMO},\frac{1}{2}K_f},
	\end{split}
\end{equation}
and
		\begin{equation}\label{estimate:Z-Pik}
	\begin{split}
			\|	  Z^{m}		\|^2_{\text{BMO},K'}				
		\leq&~c\| Y^{m} \|^2_{\infty,K} +  c\|Y^{m}\|^2_{\infty,K}\left(\|g\|^2_{\text{BMO}}+\left\|  \phi \right\|^{r}_{\text{BMO},K_\phi}\right)+ c \left\|   |  f  |^{\frac{1}{2}} \right\|^4_{\text{BMO},\frac{1}{2}K_f}  .
	\end{split}
\end{equation}

\end{lemma}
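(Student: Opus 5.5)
Existence for fixed $m$ is standard: on $[0,m]$ the driver of \eqref{BSDE:general-k-P} is Lipschitz with bounded coefficients, because $h_m(\kappa)$ and $h_m(g)$ are bounded by $m$ and $\nu$ is bounded. Assumption \ref{ass:general-BSDE}(v) gives $\int_0^m|f_s|\,ds\in L^p$ for every $p$, since the discount $e^{-K_f s}$ is bounded above and below on $[0,m]$. Hence \cite[Theorem 4.1]{Briand03} yields a solution in $\bigcap_p S^p\times M^2$ on $[0,m]$. We extend it by $0$ on $[m,\infty)$. Membership in the weighted spaces $S^{p,K}$, $M^{2,K'}$ is then automatic for fixed $m$. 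The real content is that the constants in \eqref{estimate:Y-Pik-2}--\eqref{estimate:Z-Pik} do not depend on $m$.

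For the $Y$-estimate I would use the comparison theorem for multidimensional BSDEs; it applies because $q^{ij}+h_m(\kappa^{ij})\ge 0$ for $i\neq j$. I compare $\pm Y^{m,i}$ with a one-dimensional dominating BSDE:
\[
-d\bar Y_t=\Big[(-\bar K+\ell|\phi_t|^r+c_q)\bar Y_t+h_m(g_t)^\top\bar Z_t+|f_t|\Big]dt-\bar Z_t^\top d\mathcal W_t,\qquad \bar Y_m=0 .
\]
Here I take each component of $g$ to be handled by a single common BSDE, replacing $g^i$ by the worst case. If the comparison principle cannot cope with different $g^i$, I would instead compare component-wise, using the linear representation of $Y^{m,i}$ itself. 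Solving the dominating linear BSDE explicitly gives
\[
\bar Y_t=\mathbb E_t\Big[\int_t^m \Gamma_{t,s}\,\tfrac{\mathcal E(h_m(g))_s}{\mathcal E(h_m(g))_t}\,|f_s|\,ds\Big],\qquad \Gamma_{t,s}=\exp\Big(\int_t^s(-\bar K+\ell|\phi_u|^r)\,du\Big).
\]
The $q$-terms cancel in row sums or are absorbed using $\sum_j q^{ij}=0$. This is exactly where the one-dimensional BSDE replaces the usual ODE comparison. I then multiply by $e^{-Kt}$ and write $e^{-\bar K(s-t)}e^{-Kt}=e^{-K_f s}\,e^{(K_f-K)t}\,e^{(K-\bar K)(s-t)}\cdots$, using $K<\bar K$ to control the decaying factors. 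Next I apply Hölder with three exponents. The stochastic exponential factor is bounded by Lemma \ref{lemma:reverse}, with $q$ chosen below $\Phi^{-1}(\bar N)$; note $\|h_m(g)\|_{\rm BMO}\le\|g\|_{\rm BMO}=\bar N$. The $\exp\int\ell|\phi|^r$ factor is bounded by Lemma \ref{lemma:zeta-BMO}, with $b_1=r$, $b_2=2a\ell$ and $b_3=0>K_\phi$, which gives $\||\phi|^r\|_{2a\ell,\exp}$. The remaining factor is $\int_0^\infty e^{-K_f s}|f_s|\,ds$. Applying Doob's inequality to the resulting martingale with exponent $p>2a$ gives \eqref{estimate:Y-Pik-2}. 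In the BMO case I instead take conditional expectations at stopping times and bound $\mathbb E_\tau\int_\tau^\infty e^{-K_f s}|f_s|\,ds$ by $\||f|^{1/2}\|^2_{{\rm BMO},K_f/2}$, which yields \eqref{estimate:Y-Pik}.

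For the $Z$-estimates I apply Itô's formula to $e^{-2K't}|Y^{m}_t|^2$ on $[\tau,m]$. The gap $K'>K$ produces a positive term $2(K'-K)\int e^{-2K's}|Y|^2$, which keeps the time integrals uniformly bounded. The cross terms are handled with Young's inequality. The term $e^{-2K's}|Y||g||Z|$ is bounded by $\tfrac14 e^{-2K's}|Z|^2+c\,e^{-2Ks}|Y|^2|g|^2$, which is then controlled by $\sup_s|e^{-Ks}Y_s|^2\int|g|^2$. In the $M^2$ case I use Cauchy–Schwarz and the energy inequality (Lemma \ref{lemma:energy}) to bring in $\|Y\|_{4,K}^2\|g\|^2_{\rm BMO}$. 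The $\kappa$-term is treated the same way, via $|\phi|^r\le c(1+e^{-2K_\phi s}|\phi|^2)$ and the exponential weights; this produces $\|\phi\|^r_{{\rm BMO},K_\phi}$. The $f$-term is bounded by $\sup|e^{-Ks}Y|\int e^{-K_f s}|f|$, using $2K'-K>K_f$. Taking expectations, or $\mathbb E_\tau$ in the BMO case, gives \eqref{estimate:Z-Pik-2} and \eqref{estimate:Z-Pik}.

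I expect the main obstacle to be the comparison step. One has to arrange the exponents $q$, $a$ and $p$ so that Hölder splits cleanly into the reverse-Hölder and exponential-moment pieces. The $q^{ij}$ coupling must also not create a growing factor, which is the failure mode of the constant transition rates noted in the introduction.
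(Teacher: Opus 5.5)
Your overall route is the paper's. You dominate $\pm Y^{m,i}$ by one scalar linear BSDE via the multidimensional comparison lemma of Hu--Liang--Tang. For a common scalar $\bar Y$ one has $\sum_j q^{ij}\bar Y=0$ exactly, so you can take $c_q=0$. You then solve the dominating equation explicitly, combine conditional Doob, Lemma~\ref{lemma:reverse} and Lemma~\ref{lemma:zeta-BMO} (with $b_1=r$, $b_2=2a\ell$, $b_3=0$), and finish with It\^o's formula for $e^{-2K't}|Y^m_t|^2$. Keeping $-\bar K$ in the dominating driver, instead of passing to $e^{-Kt}Y^m$ and discarding $(K-\nu^i)\bar Y\le 0$, is equivalent. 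Your factorisation of $e^{-Kt-\bar K(s-t)}$ is off, but the bound $e^{-Kt-\bar K(s-t)}\le e^{-Ks}\le e^{-K_fs}$ for $s\ge t\ge 0$ is all you need.

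\textbf{The gap.} The genuine gap is the $z$-part of the dominating driver, which you leave open. A linear term $G_t^\top\bar z$ with $G$ fixed independently of the solution (such as your $h_m(g)$) cannot dominate $h_m(g^i_t)^\top\bar z$ for all $i$; there is no linear ``worst case''. Your fallback, comparing component-wise through the linear representation of $Y^{m,i}$, does not close either. The $i$-th equation contains every $Y^{m,j}$ through $q^{ij}+h_m(\kappa^{ij})$, and bounding those terms separately brings back a growing factor from the constant rates $q^{ij}$.

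The missing idea is to use the sublinear driver $|g_t|\,|\bar z|$, with $|g|=(\sum_i|g^i|^2)^{1/2}$. By Cauchy--Schwarz, $h_m(g^i_t)^\top\bar Z_t\le|g_t||\bar Z_t|$ for every $i$. An inequality checked along the dominating solution is all the comparison lemma requires. You then linearise \emph{a posteriori}: $|g_t||\bar Z_t|=(\widetilde g^m_t)^\top\bar Z_t$ with $\widetilde g^m:=|g|\,\frac{\bar Z}{|\bar Z|}\mathbf 1_{\{\bar Z\neq 0\}}$. Since $|\widetilde g^m|\le|g|$, you get $\|\widetilde g^m\|_{\text{BMO}}\le\bar N$, and this is what makes $\mathcal K(q,\bar N)$ independent of $m$. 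Replacing $|g|$ and $\ell|\phi|^r$ by $\min(|g|,m)$ and $\ell\min(|\phi|^r,m)$ in the dominating equation keeps the domination and makes that equation Lipschitz, so its solvability is immediate; none of the bounds change.

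\textbf{Smaller points.}
\begin{itemize}
\item In the BMO case the H\"older split leaves $\mathbb E_t[(\int_t^\infty e^{-K_fs}|f_s|\,ds)^{2a}]^{1/(2a)}$. A first conditional moment is therefore not enough. You need Lemma~\ref{lemma:energy} with $n=\lceil 2a\rceil$ and Jensen, with the integral starting at $t$.
\item Since $\mathcal E_s/\mathcal E_t$ varies with $s$ inside the representation, first bound it by its $\sup_{s\ge t}$ and apply conditional Doob, and only then use Lemma~\ref{lemma:reverse}.
\item In the $Z$-step, $K'>K$ produces no favourable term. The $|Y|^2$ term on the right has coefficient $2(K'-\nu^i)\le 2(K'-\bar K)$. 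The role of $K'>K$ is only to make $\int_0^\infty e^{-2(K'-K)s}\,ds$ finite.
\item Young's bound $|\phi|^r\le c(1+e^{-2K_\phi s}|\phi|^2)$ gives $c(1+\|\phi\|^2_{\text{BMO},K_\phi})$, not $\|\phi\|^r_{\text{BMO},K_\phi}$. To get the stated form, use H\"older in time, $\int_\tau^\infty e^{-2(K'-K)s}|\phi_s|^r\,ds\le c(\int_\tau^\infty|\phi_s|^2\,ds)^{r/2}$, together with $K_\phi<0$. Either version gives the uniformity in $m$ that Theorem~\ref{thm:general-YZ} needs.
\end{itemize}
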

\begin{proof}
We first define $\widetilde Y^{m,i}_t=e^{-K t}Y^{m,i}_t$ and $\widetilde Z^{m,i}_t=e^{-K  t}Z^{m,i}_t$, which satisfy the following system of BSDEs: 
	\begin{equation}\label{BSDE:general-k-P-tilde}
		\left\{\begin{aligned}
			-d\widetilde Y^{m,i}_t=&~\left[(K-\nu^{i}_t)\widetilde Y^{m,i}_t+\sum^\ell_{j=1}q^{ij}\widetilde Y^{m,j}_t       +\sum^\ell_{j=1}h_m(\kappa^{ij}_t)\widetilde Y^{m,j}_t \right.\\
			&~+h_m(g^{i}_t)^\top\widetilde Z^{m,i}_t+   e^{-K t}f^{i}_t\Bigg]\,dt -(\widetilde Z^{m,i}_t)^\top \,d\mathcal W_t,\quad t\in[0,m),\\
			\widetilde Y^{m,i}_t=&~0,\quad t\in[m,\infty),\\
			i\in&~\mathcal M.
		\end{aligned}\right.
\end{equation}
Due to the truncation, the system of BSDEs \eqref{BSDE:general-k-P-tilde} admits a Lipschitz driver. As a result, the well-posedness of \eqref{BSDE:general-k-P-tilde}—and hence of \eqref{BSDE:general-k-P}—in the space \( S^{2,0} \times M^{2,0} \) follows from a standard result; see \cite[Theorem 4.1]{Briand03}.

Next we verify that \( (Y^{m}, Z^{m}) \in \bigcap_{p\geq 1}S^{p, K}\times M^{2, K'} \). To this end, we introduce the following auxiliary system of BSDEs and compare its solution with that of \eqref{BSDE:general-k-P-tilde}:
	\begin{equation}\label{general-Y-bar}
		\left\{ \begin{aligned}
			-d\bar Y^{m,i}_t=&~\left[\sum^\ell_{j=1}q^{ij}\bar Y^{m,j}_t+| \kappa_t| \sum^\ell_{j=1} \bar Y^{m,j}_t+|g_t| |\bar Z^{m,i}_t|+e^{-K t}| f_t|\right]\,dt-(\bar Z^{m,i}_t)^\top\,d\mathcal W_t, \quad t\in[0,m),\\
			\bar Y^{m,i}_{t}=&~0,\quad t\in[m,\infty),\\
			 i\in&~\cal M,
		\end{aligned}\right.
	\end{equation}
	where we have used the following notations of Euclidean norm
	\[
		|\kappa|=\left(\sum^\ell\limits_{i,j=1}|\kappa^{ij}|^2\right)^\frac{1}{2},\quad |g|=\left(\sum^\ell\limits_{i=1}|g^{i}|^2\right)^\frac{1}{2},\quad |f|=\left(\sum^\ell\limits_{i=1}|f^{i}|^2\right)^\frac{1}{2}.
	\]

	{\bf Step 1:} {\it the estimate for $\bar Y^{m,i}$.}
	Indeed, any solution to the following one-dimensional BSDE is also a solution to the system \eqref{general-Y-bar}  
	\begin{equation}\label{general-Y-bar-2}
		\left\{ \begin{aligned}
			-d\bar Y^{m}_t=&~\left[\ell| \kappa_t| \bar Y^{m}_t+(\widetilde g_t^m)^\top \bar Z^{m}_t+e^{-K t}| f_t|\right]\,dt-(\bar Z^{m}_t)^\top\,d\mathcal W_t, \quad t\in[0,m),\\
			\bar Y^{m}_{t}=&~0, \quad t\in[m,\infty),
		\end{aligned}\right.
	\end{equation}
	where $\widetilde g_t^{m}:= |g_t|\frac{\bar Z^{m}_t}{|\bar Z^{m}_t|}\mathbf{1}_{\{|\bar Z^{m}_t|\neq 0\}}$ for each $t$. The explicit solution of \eqref{general-Y-bar-2} is given by (see e.g. \cite[Proposition 6.2.1]{Pham})
	\[
	\bar Y^m_t=(\Psi_t^m)^{-1}\mathbb E_t\left[\int^{m}_te^{-K s}\Psi_s^m |f_s|\,ds\right] \mathbf{1}_{\{ t\leq m\}}  \ge0,
	\]
	where $\Psi^m$ satisfies
	\begin{equation}\label{eq:Psi-k}
		\begin{split}
			\Psi^m_t=\mathcal E( \widetilde g^m )_t\Gamma_t, \quad 	\text{with } \Gamma_t:=\exp\left\{\int^t_0\ell  |\kappa_s|\,ds\right\}. 
		\end{split}
	\end{equation}
	By Assumption \ref{ass:general-BSDE}(iv), we have $| \kappa|\leq |\phi|^r$. Thus, for each $p\ge 1$, applying Lemma \ref{lemma:zeta-BMO} with $b_1=r$, $b_2=p\ell$, $b_3=0$ and $K_\zeta=K_\phi<0$, we obtain
	\begin{equation}\label{inequ:Gamma-k}
		\sup_\tau\mathbb E_\tau\left[\left|\frac{\Gamma_\infty}{\Gamma_\tau}\right|^p\right]\leq \sup_\tau\mathbb E_\tau\left[ e^{ p\ell \int^\infty_\tau|\phi_s|^rds}\right]\leq \left\| |\phi|^r \right\|_{p\ell,exp}<\infty.
	\end{equation}
	Since $|\widetilde g^m|\leq| g|$, it follows that $ \left\|  \widetilde g^m \right\|_{\text{BMO},0}<\infty$. It implies that 
	$$
	N^m:=\left\| \widetilde g^m \right\|_{\text{BMO},0}\leq   \left\| g \right\|_{\text{BMO},0}:=\bar N.
	$$
	By \cite[Theorem 3.1]{Kazamaki-1994}, $\mathcal{E}(\widetilde g^m)$ satisfies the reverse H\"older inequality. Using Lemma \ref{lemma:reverse},  for each $1 < q < q_{*}:=\Phi^{-1}(\bar N)\leq \Phi^{-1}(N^m)$ and each $\tau \in\mathcal T$, it holds 
	\begin{equation}\label{reverse}
		\mathbb{E}_\tau \left[ \left|\frac{\mathcal{E}(\widetilde g^m)_{\infty} }{\mathcal{E}(\widetilde g^m)_\tau } \right|^q\right] \leq \mathcal K(q, N^m) \leq \mathcal K(q, \bar N).
	\end{equation}
	Let $p_*$ be the conjugate exponent of $q_*$, and fix any $a>p_*$ with the corresponding conjugate exponent $q$. Then $q<q_*
	$ and 
	\begin{equation}\label{estimate:bar-Y-k}
		\begin{split}
			\bar Y^m_t=&~(\Psi_t^m)^{-1}\mathbb E_t\left[\int^{m}_te^{-K s}\Psi^m_s|f_s|\,ds\right]\\
			\leq&~\mathbb E_t\left[\sup_{u\ge t}\left|\frac{\Psi^m_u}{\Psi^m_t}\right|\int^{\infty}_te^{-K s}| f_s|\,ds\right]\\
			=&~ \mathbb E_t\left[ \sup_{u\geq t} \frac{\mathcal E(\widetilde g^m)_u}{\mathcal E( \widetilde g^m)_t}   \frac{ \exp(\int_0^u\ell| \kappa_s |\,ds) }{ \exp( \int_0^t\ell |\kappa_s|\,ds       )    }  \int_t^\infty e^{-K s}|f_s|\,ds  \right]  \qquad(\text{by \eqref{eq:Psi-k}}) \\
			\leq&~ \mathbb E_t\left[  \sup_{u\geq t} \left|\frac{\mathcal E(\widetilde g^m)_u}{\mathcal E( \widetilde g^m)_t}\right|^q \right]^{\frac{1}{q}} \mathbb E_t\left[  \left|\frac{ \Gamma_\infty }{ \Gamma_t   }\right|^a  \left(\int_t^\infty e^{-K s}|f_s|\,ds\right)^a    \right]^{\frac{1}{a}}          \qquad(\text{by H\"older's inequality})  \\
			\leq&~\frac{q}{q-1}\mathbb E_t\left[\left|\frac{ \mathcal{E}(\widetilde g^m)_{\infty}}{ \mathcal{E}(\widetilde g^m)_t}\right|^q\right]^\frac{1}{q}\mathbb E_t\left[\left|\frac{\Gamma_\infty}{\Gamma_t}\right|^a\left(\int^\infty_te^{-K s} |f_s|\,ds\right)^a\right]^\frac{1}{a}\quad(\text{by conditional Doob's maximal inequality})\\
			\leq&~  \frac{q}{q-1}\mathcal K(q, \bar N)^\frac{1}{q}\mathbb E_t\left[\left|\frac{\Gamma_\infty}{\Gamma_t}\right|^{2a}\right]^\frac{1}{2a}\mathbb E_t\left[\left(\int^\infty_te^{-K s} |f_s|\,ds\right)^{2a}\right]^\frac{1}{2a} \quad(\text{by H\"older's inequality and \eqref{reverse}}) \\
		\leq&~		\frac{q}{q-1} \mathcal K(q,\bar N)^\frac{1}{q} \left\| |\phi|^r\right\|^\frac{1}{2a}_{2a\ell,exp} \mathbb E_t\left[ \left(  \int_0^\infty e^{-K_fs} |	f_s	| \,ds  \right)^{2a} \right]^{\frac{1}{2a}}		\quad(\text{by }\eqref{inequ:Gamma-k} \text{ and } K >K_f ).
		\end{split}
	\end{equation}
Applying Doob's maximal inequality, for any $p>2a$, we obtain
\begin{equation*}
	\begin{split}
		\mathbb E\left[  \sup_{t\geq 0}(\bar Y^m_t)^p  \right] \leq&~  \left(\frac{q}{q-1}\right)^p \mathcal K(q,\bar N)^\frac{p}{q} \left\| |\phi|^r\right\|^\frac{p}{2a}_{2a\ell,exp} \mathbb E\left[   \sup_{t\geq 0}  \mathbb E_t\left[ \left(  \int_0^\infty e^{-K_fs} |	f_s	| \,ds  \right)^{2a} \right]^{\frac{p}{2a}}	 \right]\\
		\leq&~ \left(\frac{q}{q-1}\right)^p  \left( \frac{p}{p-2a}   \right)^{\frac{p}{2a}} \mathcal K(q,\bar N)^\frac{p}{q} \left\| |\phi|^r\right\|^\frac{p}{2a}_{2a\ell,exp} \mathbb E\left[ \left( 	\int_0^\infty  e^{-K_fs} | f_s |\,ds	  \right)^p \right].
	\end{split}
\end{equation*}


{\bf Step 2:} {\it the estimate for $\widetilde Y^{m,i}$ and $Y^{m,i}$.}
The system of BSDEs \eqref{BSDE:general-k-P-tilde} satisfies all the conditions of \cite[Lemma 2.2]{Hu-Liang-Tang} (see also \cite{Hu-Peng-2006}). In particular, the inequality \( (K - \nu^{i}_t)\bar{Y}^m_t \leq 0 \) holds under the assumption \( K < \bar{K} \) and Assumption~\ref{ass:general-BSDE}(v). This implies that 
	\[ 
		(K-\nu^{i}_t)\bar Y^{m}_t+\sum^\ell_{j=1}q^{ij}\bar Y^{m}_t+\sum^\ell_{j=1}h_m(\kappa^{ij}_t)\bar Y^{m}_t\leq \ell| \kappa_t| \bar Y^{m}_t.
	\] 
Moreover, by the definitions of $\widetilde g^m$ and $h_m$, we have
	\[
		h_m(g^{i}_t)^\top \bar Z^{m}_t\leq (\widetilde g_t^m)^\top \bar Z^{m}_t \text{ for }i\in\cal M.
	\]
	Thus, Condition (iv) in \cite{Hu-Liang-Tang} is satisfied. The driver of \eqref{BSDE:general-k-P-tilde} satisfies Condition (iii) in \cite{Hu-Liang-Tang}, because $q^{ij}\geq 0$ and $\kappa^{ij}\geq 0$. Conditions (i) and (ii) are satisfied trivially.
Therefore, by applying \cite[Lemma 2.2]{Hu-Liang-Tang}, we conclude that for each $i=1,\cdots,\ell$,
\begin{equation}\label{compare}
			|\widetilde  Y^{m,i}_t| \leq \bar Y_t.
\end{equation}
As a result, the estimate \eqref{estimate:Y-Pik-2} follows. 
	
	{\bf Step 3: } {\it the estimate for $Z^{m,i}$.}  
	Applying It\^o's formula to $e^{-2K'\cdot}|Y^{m,i}_\cdot|^2$, we have for each $\tau\in\cal T$,
	\begin{equation*}
		\begin{aligned}
			&~e^{-2K' \tau }|Y^{m,i}_\tau|^2+ \mathbb E_\tau\left[\int^{m}_\tau e^{-2K' s}|Z^{m,i}_s|^2\,ds\right]\\
			\leq&~ \mathbb E_\tau\left[\int^{m}_\tau 2e^{-2K' s}Y^{m,i}_s\left(h_m(g^{i}_s)^\top Z^{m,i}_s+(K'-\nu^{i}_s)Y^{m,i}_s+\sum^\ell_{j=1}q^{ij}Y^{m,j}_s+\sum^\ell_{j=1}h_m(\kappa^{ij}_s)Y^{m,j}_s+f^{i}_s\right)\,ds\right]\\
			\leq&~\frac{1}{4}\mathbb E_\tau\left[\int^{m}_\tau e^{-2K' s}|Z^{m,i}_s|^2\,ds\right]+4\mathbb E_\tau\left[\int^{m}_\tau e^{-2K' s}|Y^{m,i}_s|^2|h_m(g^{i}_s)|^2\,ds\right]\\
			&~+\mathbb E_\tau\left[\int^{m}_\tau 2e^{-2K' s}Y^{m,i}_s\left((K'-\bar K)Y^{m,i}_s+\sum^\ell_{j=1}q^{ij}Y^{m,j}_s+\sum^\ell_{j=1}h_m(\kappa^{ij}_s)Y^{m,j}_s+f^{i}_s\right)\,ds\right],
		\end{aligned}
	\end{equation*}
	which implies that  
	\begin{equation*}
		\begin{split}
			&~\mathbb E_\tau\left[\int^{m}_\tau e^{-2K' s}|Z^{m,i}_s|^2\,ds\right]\\
			\leq&~ 4(K'-\bar K)\mathbb E_\tau\left[ \int_\tau^{m}  e^{-2K' s} (Y^{m,i}_s)^2\,ds  \right] +8\mathbb E_\tau\left[ \int_\tau^{m} e^{-2K' s} |h_m(g^{i}_s)|^2(Y^{m,i}_s)^2\,ds \right]  \\
			&~+4\mathbb E_\tau\left[\int^{m}_\tau e^{-2K' s}Y^{m,i}_s\left(\sum^\ell_{j=1}q^{ij}Y^{m,j}_s \right)\,ds\right] +4\mathbb E_\tau\left[\int^{m}_\tau e^{-2K' s}Y^{m,i}_s\left( \sum^\ell_{j=1}h_m(\kappa^{ij}_s)Y^{m,j}_s\right)\,ds \right]\\
			&~+4\mathbb E_\tau\left[\int^{m}_\tau e^{-2K' s}Y^{m,i}_s  f^{i}_s \,ds\right].
		\end{split}
	\end{equation*}
By the assumption that $K'>K>K_f$ and the fact that $|Z^{m,i}_t|=0$ for $t\in (m,\infty)$, we get that
\begin{equation}\label{estimate:Z-Pik-1}
	\begin{split}
		\mathbb E_{\tau}\left[\int^{\infty}_{\tau} e^{-2K' s}| Z^{m}_s|^2\,ds\right]
		\leq &~c\mathbb E_{\tau}\left[  \sup_{t\geq {\tau}} e^{-2K  t}| Y_t^{m}  |^{2}   \right]		+ c\mathbb E_{\tau}\left[  \sup_{t\geq {\tau}}  e^{-4K t} | Y_t^{m} |^{4}    \right]^{1/2}	 \mathbb E_{\tau}\left[ \left( \int_{\tau}^\infty |g_s|^2\,ds  \right)^{2} \right]^{1/2} 			\\
		&~  +	c \mathbb E_{\tau}\left[  \sup_{t\geq {\tau}} e^{-4K t}| Y^{m}_t |^{4} \right]^{1/2}   \mathbb E_\tau\left[ \left( \int_{\tau}^\infty |\phi_s|^2\,ds  \right)^{r} \right]^{1/2} 		\\
		&~+ c\mathbb E_{\tau}\left[  \sup_{t\geq {\tau}}  e^{-2K t}  |Y_t^{m}  |^{2}  \right]^{1/2}\mathbb E_{\tau}\left[     \left( \int_{\tau}^\infty  e^{-K_{f}s} |f_s| \,ds \right)^{2}      \right]^{1/2},
	\end{split}
\end{equation}
from which we obtain the estimate \eqref{estimate:Z-Pik-2} for $\tau=0$.

  If we further assume that $|f^{}|^{1/2}\in H^{2, K_f/2}_{\text{BMO}}$, then by \eqref{estimate:bar-Y-k}-\eqref{estimate:Z-Pik-1} together with Lemma \ref{lemma:energy}, we can easily obtain the desired estimates \eqref{estimate:Y-Pik} and \eqref{estimate:Z-Pik}.
\end{proof}  
\begin{remark}\label{rem:additional-estimate}
 By a similar argument as in Step 3, we can also obtain the following estimate for $Z^{m}$ for each $p\geq 1$
\begin{equation}\label{estimate:Z-Pik-3}
	\begin{split}
	&~\mathbb E\left[\left(\int^{\infty}_0 e^{-2K' s}| Z^{m}_s|^2\,ds\right)^p\right]\\
	\leq &~c\mathbb E\left[  \sup_{t\geq 0} e^{-2pK  t}| Y_t^{m}  |^{2p}   \right]		+ c\mathbb E\left[  \sup_{t\geq 0}  e^{-4pK t}  |Y_t^{m} |^{4p}    \right]^{1/2}	\mathbb E\left[ \left( \int_0^\infty |g_s|^2\,ds  \right)^{2p} \right]^{1/2} 		\\
	&~  +	c \mathbb E\left[  \sup_{t\geq 0} e^{-4pK t} |Y^{m}_t |^{4p} \right]^{1/2}   \mathbb E\left[ \left( \int_0^\infty |\phi_s|^2\,ds  \right)^{rp} \right]^{1/2} 		\\
	&~+ c\mathbb E\left[  \sup_{t\geq 0}  e^{-2pK t} | Y_t^{m}  |^{2p}  \right]^{1/2}\mathbb E\left[     \left( \int_0^\infty  e^{-K_{f}s} |f_s| \,ds \right)^{2p}      \right]^{1/2}.
	\end{split}
\end{equation}
This estimate will be used in the proof of Theorem \ref{thm:general-YZ}.
\end{remark}
%
%
\begin{theorem}\label{thm:general-YZ}
	 For any given $K_Y$ and $K_Z$ satisfying $K_f<K_Y<K_Z<\bar K$, the system of BSDEs \eqref{BSDE:Y-infinite} admits a solution $(Y,Z)$ in $\bigcap_{p\geq 1}S^{p, K_Y}\times M^{2, K_Z}$, satisfying the following transversality condition 
	\begin{equation}\label{trans:general-BSDE}
		\begin{split}
			\lim_{T\rightarrow\infty}\mathbb E\left[    e^{-pK_YT} |Y_T|^p   \right]=0.
		\end{split}
	\end{equation} 
If we further assume that $|f^{}|^{1/2}\in H^{2, K_f/2}_{\text{BMO}}$, the solution $(Y,Z)$ belongs to a finer space $L^{\infty,K_Y} \times H^{2,K_Z}_{\text{BMO}}$.
\end{theorem}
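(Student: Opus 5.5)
We will obtain $(Y,Z)$ as the limit of the truncated solutions $(Y^m,Z^m)$ from Lemma~\ref{lemma:general-P}. The plan is to show that $(Y^m,Z^m)_m$ is Cauchy in $S^{p,K_Y}\times M^{2,K_Z}$, pass to the limit in the equation, and read off the transversality condition from the uniform bounds.

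\textbf{Step 1: uniform bounds.} Fix $K_f<K_Y<K_Z<\bar K$ and choose an intermediate $K\in(K_f,K_Y)$. By \eqref{estimate:Y-Pik-2}, \eqref{estimate:Z-Pik-2} and Remark~\ref{rem:additional-estimate}, $\sup_m\|Y^m\|_{p,K}<\infty$ for every $p$. We also get $\sup_m \mathbb E[(\int_0^\infty e^{-2K''s}|Z^m_s|^2ds)^p]<\infty$ for some $K''\in(K,K_Z)$. The right-hand sides are finite and independent of $m$ for three reasons: $\mathbb E[(\int_0^\infty e^{-K_f s}|f_s|ds)^p]\le c\| |f|^{1/2}\|^{2p}_{M^{2p,K_f/2}}$, the BMO norm of $g$ gives moments of $\int|g|^2$ via Lemma~\ref{lemma:energy}, and $\int_0^\infty|\phi_s|^2ds\le \int_0^\infty e^{-2K_\phi s}|\phi_s|^2ds$ because $K_\phi<0$.

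\textbf{Step 2: Cauchy property.} For $n>m$, set $\delta Y=Y^n-Y^m$ and $\delta Z=Z^n-Z^m$. This pair solves a linear system of the same type as \eqref{BSDE:general-k-P}, with coefficients $h_n(\kappa)$, $h_n(g)$, $-\nu$, $q$ on $[0,n)$. Its source term is
\[
\delta f^i=\sum_j\big(h_n(\kappa^{ij})-h_m(\kappa^{ij})\big)Y^{m,j}+\big(h_n(g^i)-h_m(g^i)\big)^\top Z^m+\mathbf 1_{[m,n)}f^i,
\]
which accounts for the fact that $Y^m\equiv0$ after $m$. We apply the comparison argument of Lemma~\ref{lemma:general-P}, namely Steps~1--2 with the dominating scalar BSDE, the reverse H\"older inequality and $\Gamma$. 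Doing so with the weight $e^{-K_Y t}$ yields
\[
\|\delta Y\|_{p,K_Y}^p\le c\,\mathbb E\Big[\Big(\int_0^\infty e^{-K_Y s}|\delta f_s|ds\Big)^{p'}\Big]^{p/p'}
\]
for some $p'>p$, with $c$ independent of $m,n$. The constants there depend only on $\|g\|_{\rm BMO}$ and $\||\phi|^r\|_{exp}$, which dominate the truncated ones.

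We then show the right-hand side tends to $0$. The factor $|h_n(\kappa)-h_m(\kappa)|\le|\kappa|\mathbf 1_{\{|\kappa|>m\}}\to0$ a.e., and $|\kappa|e^{-(K_Y-K)s}\,e^{-Ks}|Y^m|$ is dominated by $\sup_t e^{-Kt}|Y^m_t|\int|\phi|^r e^{-(K_Y-K)s}ds$, which has uniformly bounded moments. The $g$-term is handled by Cauchy--Schwarz: it is bounded by $(\int|g|^2\mathbf 1_{\{|g|>m\}})^{1/2}(\int e^{-2(K_Y)s}|Z^m|^2)^{1/2}$, where the first factor vanishes in all $L^p$ by Lemma~\ref{lemma:energy} and dominated convergence, and the second is uniformly bounded since $K_Y>K''$ is not needed—we only need $K_Y\ge K''$. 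If necessary we instead choose $K<K''<K_Y$ in Step~1. The $f$-term is $\int_m^\infty e^{-K_Y s}|f_s|ds\le e^{-(K_Y-K_f)m}\int e^{-K_fs}|f|$, which tends to $0$. Uniform integrability follows from the uniform higher moments.

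For $\delta Z$, the Itô argument of Step~3 with weight $e^{-2K_Z t}$ gives the Cauchy property in $M^{2,K_Z}$ from that of $\delta Y$ in $S^{4,K_Y}$ plus the vanishing of $\delta f$.

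\textbf{Step 3: limit and transversality.} Let $(Y,Z)$ be the limit. On each $[0,T]$, the stochastic integrals converge in $L^2$ and the drivers converge in $L^1$, using the same domination as above. Hence $(Y,Z)$ solves \eqref{BSDE:Y-infinite} on every finite interval, and therefore on $[0,\infty)$. For transversality, take $K\in(K_f,K_Y)$ with $Y\in S^{p,K}$. Then
\[
\mathbb E[e^{-pK_YT}|Y_T|^p]\le e^{-p(K_Y-K)T}\|Y\|^p_{p,K}\to0.
\]

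\textbf{Step 4: the BMO case.} Under $|f|^{1/2}\in H^{2,K_f/2}_{\rm BMO}$, the bounds \eqref{estimate:Y-Pik} and \eqref{estimate:Z-Pik} are uniform in $m$. Passing to the a.s.\ and conditional limit, using Fatou's lemma for the conditional expectations, preserves the $L^{\infty,K_Y}$ and $H^{2,K_Z}_{\rm BMO}$ bounds.

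The main obstacle is Step~2. The difference system has source terms built from $Z^m$ and from the unbounded $\kappa$, so the comparison estimate must be applied with exponents chosen carefully to keep $m$-uniform moments. This forces the chain of intermediate discount rates $K_f<K<K''\le K_Y<K_Z$.
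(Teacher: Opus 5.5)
Your proposal is correct and follows essentially the same route as the paper. Both take the truncated solutions of Lemma~\ref{lemma:general-P}, form the difference system whose source collects the truncation errors, and apply the a priori estimates \eqref{estimate:Y-Pik-2}, \eqref{estimate:Z-Pik-2}, \eqref{estimate:Z-Pik-3} to that difference to get the Cauchy property. They then pass to the limit in the equation on finite intervals and use Fatou-type limits of the uniform bounds for the BMO refinement.

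The variations are minor. Your source term pairs $h_n-h_m$ with the lower-index $(Y^m,Z^m)$, which is the consistent choice when the difference equation is written with $h_n$ coefficients. For transversality you use a uniform $S^{p,K}$ bound with $K<K_Y$; this needs a one-line Fatou argument to pass to the limit $Y$. The paper instead uses $Y^m_T=0$ for $T\ge m$ together with convergence in $S^{p,K_Y}$.
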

\begin{proof}
	Given $K_Y$ and $K_Z$ as in the statement of the theorem, choose constants $K_F$, $K$ and $K'$ such that
	\[
		K_f<K<K'<K_F<K_Y<\bar K.
	\]
For any \( m_1, m_2 \in \mathbb{N} \), denote by \( (Y^{m_2}, Z^{m_2}) \) and \( (Y^{m_1}, Z^{m_1})\in L^{\infty,K}\times H^{2,K'}_{\text{BMO}} \) the solutions to the BSDE \eqref{BSDE:general-k-P} with \( m = m_2 \) and \( m = m_1 \), respectively, as constructed in Lemma~\ref{lemma:general-P}. Define the difference processes
\[
	\left( \Delta Y(m_2, m_1),\ \Delta Z(m_2, m_1) \right) := \left( Y^{m_2} - Y^{m_1},\ Z^{m_2} - Z^{m_1} \right).
\]
Then \( \left( \Delta Y(m_2, m_1),\ \Delta Z(m_2, m_1) \right) \) satisfies the BSDE system \eqref{BSDE:general-k-P} with \( m = m_2 \) and $f^{i}$ replaced by 
\[
	 F^{i}_s(m_2,m_1):=	\left(  h_{m_2}(g^{i}_s) - h_{m_1}( g^{i}_s )  \right)^\top Z^{m_2,i}_s + \sum_{j=1}^\ell \left(     	h_{m_2}(\kappa^{ij}_s) - h_{m_1}(\kappa^{ij}_s)		  \right) Y^{m_2,j}_s+f^{i}_s\textbf{1}_{[m_1,m_2)}.
\]
{\bf Step 1:} {\it the estimate and convergence of $F^{i}$.} In this step, we verify $F^{i}$ satisfies Assumption \ref{ass:general-BSDE}(ii) for the constant $K_F$.

By H\"older's inequality we have the following convergence estimate for each $p\ge 1$:
\begin{equation}\label{convergence:F-m}
	\begin{split}
		&~\mathbb E\left[ \left(\int_0^\infty e^{-K_F s}|  F^{i}_s(m_2,m_1)|\,ds\right)^p\right]\\
		\leq&~ c\sup_{i\in\cal M} \mathbb E\left[\left(\int^\infty_0 |h_{m_2}(g^{i}_s)-h_{m_1}(g^{i}_s)|^2\,ds\right)^p\right]^\frac{1}{2}\mathbb 
		E\left[\left(\int^\infty_0 e^{-2K's}|Z^{m_2,i}_s|^2\,ds\right)^p\right]^\frac{1}{2}\\
		&~ +c \mathbb E\left[  \sup_{t\geq 0}  e^{-2pK t} | Y_t^{m}  |^{2p}  \right]^{1/2} \mathbb E\left[\left(\int^\infty_0   e^{-(K_F-K)s}   |h_{m_2}(\kappa^{ij}_s)-h_{m_1}(\kappa^{ij}_s)|\,ds\right)^{2p}\right]^{\frac{1}{2}}\\  
		&~ +c\mathbb E\left[\left(\int^\infty_0  e^{-K_fs}  |f^{i}_s\textbf{1}_{[m_1,m_2)}|\,ds\right)^p\right]\\
		{\rightarrow}&~   0,\qquad \text{ as }{m_2, m_1 \to \infty}.
	\end{split}
\end{equation}

This convergence follows from the following two facts:
\begin{itemize}
 \item[i).]    The uniform bounds hold:    
 \[
 		\sup_m\|Y^{m}\|_{2p,K}<\infty, \quad \text{and} \sup_m\left\|  Z^{m,i}  \right\|_{M^{2p,K'}}<\infty
 \]
  by estimates \eqref{estimate:Y-Pik-2} and \eqref{estimate:Z-Pik-3}. 

 \item[ii).]  The truncation error vanishes as \( m \to \infty \): for example,
 \begin{equation*}
 	\begin{split}
 		  &~\mathbb E\left[\left(\int^\infty_0 |h_{m}(g^{i}_s)- g^{i}_s|^2\,ds\right)^p\right] \\
 		  =&~ \mathbb E\left[\left(\int^\infty_0 \left| \frac{m}{|g^{i}_s|} g^{i}_s \mathbf{1}_{\{ |g^{i}_s|\geq m \}} +  g^{i}_s\mathbf{1}_{\{0< |g^{i}_s|< m \}} -  g^{i}_s \right|^2\,ds\right)^p\right]\\
 		 \leq&~ 2^{2p+1}\mathbb E\left[   \left(\int_0^\infty | g^{i}_s |^2  \mathbf{1}_{\{ |g^{i}_s|\geq m \}} \,ds\right)^p   \right]\\
 		  \rightarrow&~ 0,
 	\end{split}
 \end{equation*}
by Assumption \ref{ass:general-BSDE}(ii). A similar argument applies to $h_m(\kappa), f\textbf{1}_{[0,m)}$.
\end{itemize}
 {\bf Step 2:} {\it convergence of $\Delta Y^{i}$ and $\Delta Z^{i}$.} By Lemma \ref{lemma:general-P}, $( \Delta Y,\Delta Z )\in \bigcap_{p\geq 1}S^{p, K_Y}\times M^{2, K_Z}$. In this step, we consider the convergence of the difference process.

By \eqref{estimate:Y-Pik-2} in Lemma \ref{lemma:general-P} and the convergence \eqref{convergence:F-m}, we obtain for any $p>2a$
\begin{equation}\label{convergence:Y-m}
	\begin{split}
		&~ \mathbb E\left[ \sup_{t\geq 0}  \left| e^{-K_Yt} \Delta  Y^{i}_t(m_2,m_1) \right|^p  \right] \\ 
	\leq&~ 			   \left(\frac{q}{q-1}\right)^p  \left( \frac{p}{p-2a}   \right)^{\frac{p}{2a}}\mathcal K(q,\bar N)^\frac{p}{q} \left\| |\phi_s|^r\right\|^\frac{p}{2a}_{2a\ell,exp} \mathbb E\left[ \left( 	\int_0^\infty  e^{-K_Fs} | F_s(m_2,m_1) |\,ds	  \right)^p \right] \\  
	\rightarrow&~ 0,\quad \text{as }m_2 \text{ and }m_1\rightarrow\infty. 
	\end{split}
\end{equation}
By the estimate \eqref{estimate:Z-Pik-2}, we have
\begin{equation}\label{convergence:Z-m}
	\begin{split}
		&~\mathbb E\left[\int_0^\infty e^{-2K_Zs}| \Delta Z_s(m_2,m_1)|^2\,ds\right]\\
		\leq&~	c\mathbb E\left[  \sup_{t\geq 0} e^{-2K_Yt}| \Delta Y_t(m_2,m_1) |^2   \right]		+ c\mathbb E\left[  \sup_{t\geq 0} | e^{-K_Yt}\Delta Y_t(m_2,m_1)   |^4    \right]^{1/2}\mathbb E\left[ \left( \int_0^\infty |g_s|^2\,ds  \right)^{2} \right]^{1/2} 			\\
		&~+	c \mathbb E\left[  \sup_{t\geq 0} |e^{-K_Yt}\Delta Y_t(m_2,m_1)|^4 \right]^{1/2}  \mathbb E\left[ \left( \int_0^\infty |\phi_s|^2\,ds  \right)^{r} \right]^{1/2}	\\
		&~+ c\mathbb E\left[  \sup_{t\geq 0} | e^{-K_Yt}\Delta Y_t(m_2,m_1) |^2  \right]^{1/2}\mathbb E\left[     \left( \int_0^\infty  e^{-K_Fs}|F_s|\,ds \right)^2      \right]^{1/2} \\
		&\xrightarrow{m_2, m_1 \to \infty} 0.
	\end{split}
\end{equation}
%
Thus, the sequence $(Y^{m},Z^{m})$ is a Cauchy one in $\bigcap_{p\geq 1}S^{p,K_Y}\times M^{2,K_Z}$, and admits a limit denoted by $(Y,Z)\in \bigcap_{p\geq 1}S^{p,K_Y}\times M^{2,K_Z}$.  

{\bf Step 3:} {\it the limit $(Y,Z)$ is a solution to the system \eqref{BSDE:Y-infinite}.}  Our goal is to prove that each term in the BSDE for $(e^{-K_Z\cdot} Y^{m}_\cdot,e^{-K_Z\cdot}Z^{m}_\cdot)$ converges a.s. to the corresponding term in the BSDE for $(e^{-K_Z\cdot} Y_\cdot,e^{-K_Z\cdot}Z_\cdot)$, by extracting a subsequence if necessary. Note that $(e^{-K_Z\cdot} Y^{m}_\cdot,e^{-K_Z\cdot}Z^{m}_\cdot)$ satisfy the BSDE \eqref{BSDE:general-k-P-tilde} with $K$ replaced by $K_Z$.

First, by the convergence of $Y^{m}$ in Step 2, we have
  \begin{equation*}
  	\begin{split}
  		&~\lim_{m\rightarrow\infty }  \int_0^\infty \left(| K_Z-\nu^{i}_s | e^{-K_Zs} | Y^{m,i}_s - Y^{i}_s	|	+ \sum_{j=1}^\ell q^{ij} e^{-K_Z s} |	Y^{m,j}_s - Y^{j}_s		|		\right)	\,ds  \\
  		\leq&~ c\lim_{m\rightarrow\infty }  \sup_{t\geq 0} e^{-K_Y t}| Y^{m}_t - Y_t   |  \\
  		=&~ 0,
  	\end{split}
  \end{equation*}
  where we have used the fact $K_Y<K_Z$.
  
  Second, using again the convergence of $Y^{m}$ in Step 2 and the intergrability of $\kappa$ in Assumption \ref{ass:general-BSDE}(iv), we obtain
  \begin{equation*}
  	\begin{split}
  		 &~\lim\limits_{m\rightarrow\infty }   \int_0^{\infty} e^{-K_Zs}\sum^\ell_{j=1}|h_{m}(\kappa^{ij}_s)Y^{m,j}_s- \kappa^{ij}_sY^{j}_s|\,ds  \\
  		 \leq&~c\lim\limits_{m\rightarrow\infty }   \int_0^{\infty} e^{-K_Zs} \max_{j\in\cal M}|h_{m}(\kappa^{ij}_s)||Y^{m}_s-  Y_s|\,ds   + c\lim\limits_{m\rightarrow\infty }   \int_0^{\infty} e^{-K_Zs} \max_{j\in\cal M}|h_{m}(\kappa^{ij}_s)-\kappa_s^{ij}| | Y_s|\,ds  \\
  		 \leq&~ \lim_{m\rightarrow\infty} \sup_{t\geq 0} e^{-K_Yt}| Y^{m}_t - Y_t  |   \int_0^\infty  e^{-(K_Z-K_Y)s} |\kappa_s|\,ds + c\sup_{t\geq 0} e^{-K_Yt}|Y_t|\lim_{m\rightarrow\infty} \int_0^\infty e^{-(K_Z-K_Y)s} | \kappa_s| \mathbf{1}_{\{ | \kappa_s |>m \}}         \,ds \\
  		 =&~0,
  	\end{split}
  	\end{equation*}
    where we have used again the fact $K_Y<K_Z$.
    
 Similarly, we get 
\begin{equation*}
 	\begin{split}
 		&~ \lim\limits_{m\rightarrow\infty } \int_0^{\infty} e^{-K_Zs}|h_{m}(g^{i}_s)^\top Z^{m,i}_s- (g^{i}_s)^\top Z^{i}_s|\,ds \\
 		\leq&~ c\lim\limits_{m\rightarrow\infty }  \left(  \int^\infty_0 |h_{m}(g^{i}_s)-g^{i}_s|^2\,ds\right)^\frac{1}{2}\left(   \int^\infty_0 e^{-2K_Zs}|Z_s|^2\,ds\right)^\frac{1}{2}+c\lim\limits_{m\rightarrow\infty } \left(\int^\infty_0 |g_s|^2\,ds\right)^\frac{1}{2}\left( \int^\infty_0 e^{-2K_Zs}|Z^{m}_s-Z_s|^2\,ds\right)^\frac{1}{2}\\
 		= &~ 0,
 	\end{split}
 \end{equation*}
and
\begin{equation*}
	\begin{split}
		~ \lim\limits_{m\rightarrow\infty } \int_0^{\infty} e^{-K_Zs}|f^{i}_s\mathbf{1}_{[0,m)}(s)-f^{i}_s|\,ds 
		\leq~ c\lim\limits_{m\rightarrow\infty } \int_0^{\infty} e^{-K_Zs}|f^{i}_s|\mathbf{1}_{[m,\infty)}\,ds
		= ~ 0. 
	\end{split}
\end{equation*}

Finally, for any $t<T$ it holds 
  \begin{equation*}
  	\begin{split}
  	\lim\limits_{m\to\infty}\mathbb E\left[\left|\int^{T}_te^{-K_Zs}(Z^{m,i}_s-Z^{i}_s)^\top \,d\mathcal W_s\right|^2\right]\leq c\lim_{m\rightarrow\infty} \mathbb E\left[\int^\infty_0e^{-2K_Z s}|Z^{m,i}_s-Z^{i}_s|^2ds\right]= 0.
  	\end{split}
  \end{equation*}
{\bf Step 4:} {\it the limit $(Y^{i},Z^{i})$ resides in a finer space if $|f^{}|^{1/2}\in H^{2, K_f/2}_{\text{BMO}}$.}
By the convergence \eqref{convergence:Y-m} and the estimate \eqref{estimate:Y-Pik}, and by noting $K_Y>K$, we obtain the following almost sure convergence (up to a subsequence)
\[
e^{-K_Yt}|Y^{i}_t|	=  \lim_{m\rightarrow\infty}e^{-K_Y t}| Y^{m,i}_t 	|\leq 	\ \frac{q}{q-1} \mathcal K(q,\bar N)^\frac{1}{q}\left\| |\phi|^r\right\|^\frac{1}{2a}_{2a\ell,exp} \left(\lceil 2a\rceil !  \right)^{\frac{1}{2a}}  \left\|	  |f|^{\frac{1}{2}}	\right\|^{2}_{\text{BMO},\frac{1}{2}K_f}<\infty,
\]
which implies $Y^{i}\in L^{\infty,K_Y} $. By the convergence \eqref{convergence:Z-m} and the estimate \eqref{estimate:Z-Pik}, and by noting $K_Z>K'$ and $K_Y>K$ we have the following almost sure convergence (up to a subsequence)
\begin{equation*} 
	\begin{split}
		&~\mathbb E_\tau\left[\int^{\infty}_\tau e^{-2K_{Z}s}|Z_s|^2\,ds\right]=\lim_{m\rightarrow\infty }\mathbb E_\tau\left[\int^{\infty}_\tau e^{-2K_{Z}s}|Z^{m}_s|^2\,ds\right]\\
		\leq&~c\sup_{m}\| Y^{m} \|^2_{\infty,K_Y} + c\sup_{m}\|Y^{m}\|^2_{\infty,K_Y} \left(\|g\|^2_{\text{BMO}}+ \left\|  \phi \right\|^{r}_{\text{BMO},K_\phi}\right)   + c \left\|  |  f  |^{\frac{1}{2}} \right\|^4_{\text{BMO},\frac{1}{2}K_f}\\
		<&~\infty,
	\end{split}
\end{equation*}
where the above inequality is obtained by \eqref{estimate:Y-Pik}. Thus, $Z^{i}\in H^{2,K_Z}_{\text{BMO}}$. 

{\bf Step 5.} For each $p\geq 1$, the following convergence implies the transersality condition \eqref{trans:general-BSDE}:
\begin{equation*}
	\begin{split}
	&~\sup_{T\geq m} \mathbb E\left[  e^{-p K_Y T} |Y_T|^p    \right] \\
	\leq&~ c\mathbb E\left[ \sup_{t\geq 0}e^{-pK_Y t} | Y^m_t-Y_t	|^p     \right]  +c \sup_{T\geq m} \mathbb E\left[ e^{-pK_Y T} | Y^m_T |^p  \right]\\
	=&~c\mathbb E\left[ \sup_{t\geq 0}e^{-pK_Y t} | Y^m_t-Y_t	|^p     \right] \\
	\rightarrow&~ 0,\quad \text{as }m\rightarrow\infty,
	\end{split}
\end{equation*}
by the convergence in Step 2.
\end{proof}

\subsection{The BSDE Systems \eqref{BSDE:A-inf}-\eqref{BSDE:C-inf}: Existence}\label{sec:convergence-BC}

In this section, we establish the existence of solutions to the infinite-horizon BSDE systems \eqref{BSDE:A-inf}–\eqref{BSDE:C-inf}. Among these, the system for \( A \), given by \eqref{BSDE:A-inf}–\eqref{trans:A}, can be treated by adapting the argument in \cite[Theorem 4.2]{Hu-COCV}; the relevant results are summarized in Appendix \ref{app:A}. The existence results for the systems corresponding to \( B \) and \( C \) will be established using the framework developed in Section~\ref{subsec:general-BSDE}.

The following estimate for $\frac{1}{\gamma}$ will be used frequently.
\begin{lemma}\label{lemma:gamma}
	Under Assumption \ref{ass:standing}, for each $p\ge 1$, we have  
	$$\mathbb E\left[\sup_{t\ge 0}\left|\frac{1}{\gamma_t}\right|^p\right]<\infty.$$
\end{lemma}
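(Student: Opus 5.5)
We write $\gamma$ explicitly. Since $d\gamma_t=\gamma_t(\mu_t\,dt+\sigma_{1,t}\,dW_t)$ with $\gamma_0>0$, we have
\[
\frac{1}{\gamma_t}=\frac{1}{\gamma_0}\exp\left\{-\int_0^t\left(\mu_s-\tfrac12\sigma_{1,s}^2\right)ds-\int_0^t\sigma_{1,s}\,dW_s\right\}.
\]
By Assumption \ref{ass:standing}(ii), $|\mu_s|+\sigma_{1,s}^2\le \hat c e^{-Ls}+\hat c^2e^{-2Ls}$. Hence the drift part is bounded uniformly in $t$ and $\omega$:
\[
\int_0^\infty\left|\mu_s-\tfrac12\sigma_{1,s}^2\right|ds\le \frac{\hat c}{L}+\frac{\hat c^2}{4L}=:c_0 .
\]
It follows that
\[
\sup_{t\ge0}\left|\frac1{\gamma_t}\right|^p\le \gamma_0^{-p}e^{pc_0}\,\sup_{t\ge0}\exp\left\{-p\int_0^t\sigma_{1,s}\,dW_s\right\},
\]
and it suffices to show that $\mathbb E[\sup_{t\ge0}e^{pM_t}]<\infty$ for the continuous martingale $M_t:=-\int_0^t\sigma_{1,s}\,dW_s$.

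To do this, we factor $e^{pM_t}=\mathcal E(-p\sigma_1)_t\exp\{\frac{p^2}{2}\int_0^t\sigma_{1,s}^2ds\}$. The second factor is bounded by the deterministic constant $\exp\{p^2\hat c^2/(4L)\}$. The quadratic variation $\langle M\rangle_\infty\le \hat c^2/(2L)$ is bounded, so Novikov's criterion shows that $\mathcal E(-2p\sigma_1)$ is a true martingale. Running the same factorization with $2p$ in place of $p$ gives $\mathbb E[e^{2pM_t}]\le \exp\{2p^2\hat c^2/L\}$ uniformly in $t$.

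Thus $e^{pM}$ is a nonnegative submartingale (the exponential of a martingale) that is bounded in $L^2$ uniformly in time. Doob's $L^2$ maximal inequality then gives
\[
\mathbb E\Big[\sup_{t\ge0}e^{pM_t}\Big]\le \mathbb E\Big[\sup_{t\ge0}e^{2pM_t}\Big]^{1/2}\le 2\sup_{t\ge0}\mathbb E[e^{2pM_t}]^{1/2}<\infty,
\]
after letting the horizon tend to infinity by monotone convergence. Alternatively, one could bound $\sup_t M_t$ through a time change and the reflection principle, since $\langle M\rangle_\infty$ is bounded, which yields Gaussian tails for $\sup_t M_t$.

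The only point needing care is passing the supremum over the infinite horizon inside the expectation. The argument handles this by combining the uniform-in-time $L^2$ bound from the bounded quadratic variation with Doob's inequality on $[0,T]$ and then letting $T\to\infty$. The exponential decay assumed in Assumption \ref{ass:standing}(ii) is exactly what makes the integrals $\int_0^\infty|\mu_s|\,ds$ and $\int_0^\infty\sigma_{1,s}^2\,ds$ finite deterministic constants, which every step relies on.
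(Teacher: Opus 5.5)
Your proof is correct and follows the paper's route: the paper also writes $1/\gamma_t$ in closed form and concludes directly from the exponential decay of $\mu$ and $\sigma_1$ in Assumption \ref{ass:standing}(ii). You supply the details the paper leaves implicit, namely the deterministic bounds on $\int_0^\infty|\mu_s-\frac12\sigma_{1,s}^2|\,ds$ and on $\langle M\rangle_\infty$, and Doob's $L^2$ inequality for the submartingale $e^{pM}$ followed by monotone convergence in $T$.
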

\begin{proof}
	From \eqref{eq:gamma} we have
	\begin{equation*}
		d\frac{1}{\gamma_s}=\frac{1}{\gamma_s}\Bigg(\left(-\mu_s+\sigma_{1,s}^2\right)\,ds-\sigma_{1,s}\,dW_s\Bigg),
	\end{equation*}
	which implies the closed form expression
	\begin{equation*}
		\frac{1}{\gamma_t}=\frac{1}{\gamma_0}\exp\left\{\int^t_0\left(-\mu_s+\frac{\sigma_{1,s}^2}{2}\right)\,ds-\int^t_0\sigma_{1,s}\,dW_s\right\}.
	\end{equation*}
	Since $|\mu_{t}|\leq \hat c e^{-Lt}$, and  $|\sigma_{1,t}|\leq \hat c e^{-Lt}$, we get the desired estimate.
\end{proof}

We first rewrite \eqref{BSDE:B-inf} in the form compatible with the framework of Section \ref{subsec:general-BSDE}. Specifically, we consider the following formulation:
\begin{equation}\label{BSDE:B-inf-2}
	\left\{
	\begin{aligned}
		&- d \bar{B}_t^{i} =\left\{ -\nu^{i}_t 
		\bar{B}_t^{i}  +\kappa^{i}_t\bar B^{i}_t + \sum_{i = 1}^\ell q^{ij} \bar{B}_t^{j} +  g^{i}_tZ^{\bar B,i}_t  +
		f^{\bar B,i}_t \right\} d t -Z_t^{\bar{B}, i} d W_t, \\
		&\lim\limits_{T\to\infty} \mathbb E\left[   e^{\frac{1}{4}\beta p T}|\bar B^i_T|^p \right]= 0 \quad 	\text{ for each }  p\geq 1 \text{ and }  i\in\mathcal  M.
	\end{aligned}\right.
\end{equation}
Here, the coefficients are given by
	\begin{align*}  
			\nu^{i}_t=&~  \frac{\phi}{2} +
			\frac{\gamma_t}{a^{i}_t}\left(\rho^i_t   +\mu_t - (\sigma_{1,t})^2    		  \right)\left( (\mu_t+\rho^i_t) \bar{A}^{i}_t + \frac{\lambda^i_t}{\gamma_t}  \right),			\qquad   \kappa^{i}_t = -
			\frac{ \gamma_t }{a^{i}_t } \left(  \rho^i_t  +\mu_t-(\sigma_{1,t})^2	\right)	\sigma_{1,t}
			Z^{\bar{A}, i}_t,\\
			f^{\bar B,i}_t =&~  \frac{\gamma_t}{a^{i}_t}\left( 2   \sigma_{1, t} \widetilde\sigma_t^i
			\bar{A}_t^{i} -   \sigma_{1, t} \widetilde\sigma_t^i     \right)
			\left( \mu_t \bar{A}_t^{i} + \sigma_{1, t} Z_t^{\bar{A}, i}
			+ \frac{\lambda_t^i}{\gamma_t} + \rho_t^i \bar{A}_t^{i}
			\right)  + 2    \widetilde\sigma_t^i Z_t^{\bar{A}, i},\\
			g^{i}_t=&~-\frac{\gamma_t}{a^{i}_t}   \sigma_{1,t}  
			\left( (\mu_t+\rho_t^i ) \bar{A}_t^{i} + \sigma_{1, t} Z_t^{\bar{A}, i}
			+ \frac{\lambda_t^i}{\gamma_t} \right).
	\end{align*}
 The next lemma confirms that the coefficients appearing in \eqref{BSDE:B-inf-2} fulfill all the conditions outlined in Assumption \ref{ass:general-BSDE} in Section~\ref{sec:general-BSDE}.
\begin{lemma}\label{lemma:nu-kappa-f-B}
	The processes $\nu$, $\kappa$, $f^{\bar B}$ and $g$ satisfy the requirements in Assumption \ref{ass:general-BSDE} in Section \ref{sec:general-BSDE}. In particular, $-\frac{\beta}{2}<K_{f^{\bar B}}<\frac{\phi}{2}$, where we recall $\beta$ is the positive constant appearing in Assumption \ref{ass:standing}.
\end{lemma}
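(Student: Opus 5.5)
We will verify the five items of Assumption \ref{ass:general-BSDE} one by one, relying on the properties of the solution $(\bar A,Z^{\bar A})$ of \eqref{BSDE:A-inf} collected in Appendix \ref{app:A}. Specifically, we take from there that $\bar A^i$ is bounded, with $0\le \bar A^i\le \bar c_A$ or at least $|\bar A^i|\le c$, and that $Z^{\bar A,i}\in H^{2,0}_{\text{BMO}}$, or at least $e^{-K\cdot}$-weighted BMO for suitable $K$. The basic tool throughout is a uniform lower bound on $a^i_t/\gamma_t$. By \eqref{def:a-i},
\[
\frac{a^i_t}{\gamma_t}=\sigma_{1,t}^2\bar A^i_t+\frac{\phi}{2}+\rho^i_t+\frac{\mu_t}{2}-\frac{\sigma_{1,t}^2}{2}+\frac{\lambda^i_t}{\gamma_t}\ \ge\ \epsilon,
\]
using Assumption \ref{ass:standing}(iii), $\bar A^i\ge 0$ and $\lambda^i\ge0$. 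Hence $\gamma/a^i\le 1/\epsilon$. With (i)--(ii) this shows that all prefactors $\frac{\gamma}{a^i}(\rho^i+\mu-\sigma_1^2)$ are bounded, and that $\sigma_1$ carries a decay factor $e^{-Lt}$.

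\emph{Items (iv) and (iii).} For $\nu^i$, boundedness from above follows at once from the bounds above, which gives $\bar c$. For the lower bound $\bar K$, we use $\rho^i+\mu-\sigma_1^2\ge0$ from (iii) together with $(\mu+\rho^i)\bar A^i+\lambda^i/\gamma$. The latter is not obviously nonnegative, so we will instead bound $\nu^i$ from below by $\frac{\phi}{2}-c\,e^{-Lt}$ or simply by a constant. Since only $\bar K\le\nu$ is needed, any constant lower bound suffices, and we set $\bar K:=\inf\nu$. More carefully, we aim for $\bar K$ close to $\phi/2$, using the last inequality of (iii), which is precisely designed for this (with $\bar A^i$ near its equilibrium value). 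This is where the bound $K_{f^{\bar B}}<\frac{\phi}{2}$ will matter: we need $K_f<\bar K$, so we aim to show $\bar K\ge$ something slightly below $\phi/2$, or at least strictly above the chosen $K_f$.

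For $\kappa$ we take $r=1$ and $\phi^{\kappa}:=c\,\sigma_1 Z^{\bar A}$. Since $|\sigma_{1,t}|\le \hat c e^{-Lt}$, we get $\|\sigma_1Z^{\bar A}\|_{\text{BMO},K_\phi}<\infty$ with $K_\phi=-L<0$. The sign condition $\kappa^{ij}\ge0$ for $i\neq j$ is trivial because $\kappa$ is diagonal. The same decay argument gives $g\in H^{2,0}_{\text{BMO}}$: $|g^i|\le c e^{-Lt}(1+|Z^{\bar A,i}|)$, and a bounded deterministic integrable-square term is BMO.

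\emph{Item (v), the main step.} Each term of $f^{\bar B,i}$ contains a factor $\widetilde\sigma^i$, with $|\widetilde\sigma^i_t|\le \hat c e^{-\beta t/2}$, so
\[
|f^{\bar B,i}_t|\le c\,e^{-\frac{\beta}{2}t}\bigl(1+|Z^{\bar A,i}_t|\bigr).
\]
We need $\mathbb E[(\int_0^\infty e^{-K_f s}|f_s|\,ds)^p]<\infty$ for all $p$, i.e.\ $|f|^{1/2}\in M^{p,K_f/2}$. Taking $K_f>-\beta/2$, we have $e^{-K_f s}e^{-\beta s/2}=e^{-(K_f+\beta/2)s}$, which is integrable. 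Cauchy--Schwarz then bounds the $Z$-part by $(\int e^{-(K_f+\beta/2)s}ds)^{1/2}(\int e^{-(K_f+\beta/2)s}|Z^{\bar A}_s|^2ds)^{1/2}$. All moments of the latter are finite by the energy inequality (Lemma \ref{lemma:energy}), since $Z^{\bar A}$ is BMO in a weighted space with weight dominated by this one. We choose $K_f\in(-\beta/2,\min(\bar K,\phi/2))$, which is possible since $\bar K>-\beta/2$. Matching the weight of the BMO norm of $Z^{\bar A}$ from the appendix to $K_f+\beta/2>0$ is the delicate point. We expect it to go through because the appendix yields $Z^{\bar A}$ BMO with any positive exponential weight (from transversality \eqref{trans:A} for every $K_{\bar A}>0$). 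The same estimate even shows $|f|^{1/2}\in H^{2,K_f/2}_{\text{BMO}}$.
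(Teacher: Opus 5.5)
Your overall route is the paper's: $r=1$, the bound $a^i/\gamma\ge\epsilon$, the exponential decay of $\sigma_1$ and $\widetilde\sigma^i$, and the BMO property of $Z^{\bar A}$ for every positive weight, combined with Cauchy--Schwarz to get $|f^{\bar B}|^{1/2}\in H^{2,K_{f^{\bar B}}/2}_{\text{BMO}}$. Your treatment of $g$, of the diagonal structure of $\kappa$, and of $f^{\bar B}$ is correct. There is, however, a genuine gap in the lower bound for $\nu$, and the ``in particular'' clause of the statement depends on it.

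\textbf{The gap: the lower bound $\bar K$.} You call $(\mu+\rho^i)\bar A^i+\lambda^i/\gamma$ ``not obviously nonnegative'' and fall back on $\bar K:=\inf\nu$. Later you assert $\bar K>-\frac{\beta}{2}$ without proof. The only bound you actually have is the crude one, $\nu^i\ge\frac{\phi}{2}-c$, and that does not exclude $\bar K\le-\frac{\beta}{2}$. In that case the interval $\bigl(-\frac{\beta}{2},\min(\bar K,\frac{\phi}{2})\bigr)$ from which you pick $K_{f^{\bar B}}$ is empty. The later choice $K_{f^{\bar B}}=-\frac{\beta}{3}<-\frac{\beta}{4}<-\frac{\beta}{8}<\bar K$ used for Theorem \ref{thm:B} would then also be unavailable.

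\textbf{The fix.} The missing observation is elementary. The second inequality in Assumption \ref{ass:standing}(iii) gives $\rho^i+\mu\ge\sigma_1^2\ge0$. Moreover $\bar A^i\in[0,\frac12]$ by Proposition \ref{prop:A}, $\lambda^i/\gamma\ge0$, and $\gamma/a^i>0$. Hence the second term of $\nu^i$ is a product of nonnegative factors, and $\nu^i\ge\frac{\phi}{2}$. The paper simply takes $\bar K=\frac{\phi}{2}$, which makes every $K_{f^{\bar B}}\in(-\frac{\beta}{2},\frac{\phi}{2})$ admissible. The last inequality of (iii) plays no role here; it is used only later, for $\hat q^i>\varphi$.

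\textbf{A smaller slip: the choice of $K_\phi$.} With $K_\phi=-L$ you have $e^{-2K_\phi s}|\sigma_{1,s}Z^{\bar A}_s|^2\le \hat c^2|Z^{\bar A}_s|^2$. So you would need $\|Z^{\bar A}\|_{\text{BMO},0}<\infty$, which Proposition \ref{prop:A} does not give: it gives $H^{2,K_{\bar A}}_{\text{BMO}}$ only for $K_{\bar A}>0$. Take $K_\phi\in(-L,0)$ instead, as the paper does. Then the weight is $e^{-2(L+K_\phi)s}$ with $L+K_\phi>0$.
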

\begin{proof}
	We verify each condition in Assumption \ref{ass:general-BSDE}.
	
	(i) Choose $r=1$, $\bar K=\frac{\phi}{2}$.
	
	(ii) 	Now let \( K_{f^{\bar{B}}} \in \left( -\frac{\beta}{2}, \frac{\phi}{2} \right) \). Then for each \( \tau \in \mathcal{T} \), we estimate:
	\begin{equation*}
		\begin{split}
			\mathbb E_\tau\left[\int^\infty_\tau e^{-  K_{f^{\bar B}}s}|f^{\bar B,i}_{s}| \,ds\right]\leq&~ c\mathbb E_\tau\left[\int^\infty_0e^{- (\frac{\beta}{2}+K_{f^{\bar B}})s}\,ds\right]+c\mathbb E_\tau\left[\int^\infty_\tau e^{-( \frac{\beta}{2}+ K_{f^{\bar B}})s}|Z_s^{\bar{A}}|\,ds\right]\\
			\leq&~ c+c  \| Z^{\bar A}\|_{\text{BMO},\frac{1}{4}\beta+\frac{1}{2}K_{f^{\bar B}}}\\
			<&~\infty,
		\end{split}
	\end{equation*}
	which implies $|f^{\bar B}|^{1/2}\in H^{2, \frac{1}{2}K_{f^{\bar B}}}_{\text{BMO}}$.

(iv) Let \( K_\phi \) be any constant satisfying \( -L < K_\phi < 0 \), where \( L \) is the constant appearing in Assumption~\ref{ass:standing}. 
By Assumption \ref{ass:standing}, $|\kappa|\leq c e^{-L\cdot} |Z^{\bar A}|:=|\phi|$, which belongs to $H^{2,K_\phi}_{\text{BMO}}$.


Similarly, by Proposition \ref{prop:A} and Corollary \ref{coro:gamma-a}, we also have (iii):
\[
\|g\|_{\text{BMO},0}<\infty.
\]		
(v) By Assumption \ref{ass:standing}, Proposition \ref{prop:A} and Corollary \eqref{coro:gamma-a}, $\bar K\leq \nu^{i}\leq c$ holds for some positive constant $c$.
\end{proof}
Let $K_{\bar B}=-\frac{\beta}{4}$, $K_{Z^{{\bar B}}}=-\frac{\beta}{8}$ and $K_{f^{\bar B}}=-\frac{\beta}{3}$. Given Lemma \ref{lemma:nu-kappa-f-B},  Theorem \ref{thm:general-YZ} then yields the next theorem which establishes the existence result for \eqref{BSDE:B-inf-2}.  
\begin{theorem}\label{thm:B}
	The system of infinite-horizon BSDEs \eqref{BSDE:B-inf-2} with transversality condition \eqref{trans:B} admits a solution  $(\bar B,Z^{\bar B})$ in $L^{\infty,-\frac{ \beta}{4}} \times H^{2,-\frac{\beta}{8}}_{\text{BMO}} $.
\end{theorem}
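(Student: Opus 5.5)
The plan is to obtain Theorem \ref{thm:B} as a direct application of Theorem \ref{thm:general-YZ} to the system \eqref{BSDE:B-inf-2}, with the coefficients $\nu,\kappa,g,f^{\bar B}$ defined above and $\mathcal W=W$. Lemma \ref{lemma:nu-kappa-f-B} supplies Assumption \ref{ass:general-BSDE} with $r=1$ and $\bar K=\frac{\phi}{2}$. Since the diagonal term $\kappa^i_t\bar B^i_t$ in \eqref{BSDE:B-inf-2} corresponds to $\kappa^{ii}$ in \eqref{BSDE:Y-infinite} with $\kappa^{ij}=0$ for $i\neq j$, the sign condition $\kappa^{ij}\ge 0$ for $i\ne j$ holds trivially, and the bound $|\kappa|\le ce^{-L\cdot}|Z^{\bar A}|=:|\phi|$ gives condition (iii).

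For the constants, I would take $K_f:=K_{f^{\bar B}}=-\frac{\beta}{3}$, $K_Y:=K_{\bar B}=-\frac{\beta}{4}$ and $K_Z:=K_{Z^{\bar B}}=-\frac{\beta}{8}$. The next step is to check the ordering required by Theorem \ref{thm:general-YZ}: since $\beta>0$ and $\phi\ge 0$,
\[
-\tfrac{\beta}{3}<-\tfrac{\beta}{4}<-\tfrac{\beta}{8}<0\le\tfrac{\phi}{2}=\bar K .
\]
Moreover, $K_f=-\frac{\beta}{3}$ lies in the admissible range $(-\frac{\beta}{2},\frac{\phi}{2})$ of Lemma \ref{lemma:nu-kappa-f-B}. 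That lemma also gives the stronger property $|f^{\bar B}|^{1/2}\in H^{2,K_f/2}_{\text{BMO}}$, which in particular yields condition (v), namely $|f^{\bar B}|^{1/2}\in\bigcap_p M^{p,K_f/2}$. This implication follows from the energy inequality, Lemma \ref{lemma:energy}, together with $\mathbb E[(\int_0^\infty e^{-K_fs}|f_s|ds)^p]<\infty$.

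With these in place, Theorem \ref{thm:general-YZ}, in its refined form, produces a solution $(\bar B,Z^{\bar B})\in L^{\infty,-\beta/4}\times H^{2,-\beta/8}_{\text{BMO}}$. It remains to check the transversality condition \eqref{trans:B}. Because $\bar B\in L^{\infty,-\beta/4}$, we have $|\bar B_T|\le ce^{-\beta T/4}$ a.s., so
\[
e^{\frac{\beta}{4}pT}|\bar B_T|^p\le c^p .
\]
This gives boundedness but not the limit zero, so I would not work with $K_Y=-\frac{\beta}{4}$ alone. Instead I would apply Theorem \ref{thm:general-YZ} a second time with $K_Y'\in(-\frac{\beta}{3},-\frac{\beta}{4})$, for instance $-\frac{7\beta}{24}$. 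The transversality condition \eqref{trans:general-BSDE} at level $K_Y'$ then gives $\mathbb E[e^{-pK'_YT}|\bar B_T|^p]\to0$, and since $-pK_Y'>\frac{\beta}{4}p$ this implies \eqref{trans:B}.

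The point that needs care is that the solution produced at level $K_Y'$ is the same one as at level $K_Y$. It is, because the construction in the proof of Theorem \ref{thm:general-YZ} uses the same truncated solutions $Y^m$ regardless of the level, and their limit is the same process in both spaces. Alternatively, one can directly use the pathwise bound $e^{-K'_Yt}|\bar B_t|\le C$ and write $e^{\frac{\beta}{4}T}|\bar B_T|\le Ce^{(K'_Y+\frac{\beta}{4})T}\to0$, which gives the limit by dominated convergence.

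The main obstacle is really the verification of the BMO-type hypotheses on $g$, $\kappa$ and $f^{\bar B}$ uniformly in time, since these require the boundedness of $\gamma/a^i$ and the BMO bounds on $Z^{\bar A}$ from Proposition \ref{prop:A} and Corollary \ref{coro:gamma-a}. That work is already contained in Lemma \ref{lemma:nu-kappa-f-B}, which I take as given.
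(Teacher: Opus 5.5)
Your proposal is correct and follows the paper's route: a direct application of the refined form of Theorem \ref{thm:general-YZ}, with $K_f=-\frac{\beta}{3}$, $K_Y=-\frac{\beta}{4}$, $K_Z=-\frac{\beta}{8}$ and $\bar K=\frac{\phi}{2}$, with the hypotheses supplied by Lemma \ref{lemma:nu-kappa-f-B}. Your second application at a level $K_Y'$ is valid but unnecessary, because the transversality condition \eqref{trans:general-BSDE} that Theorem \ref{thm:general-YZ} already gives at $K_Y=-\frac{\beta}{4}$ reads $\lim_{T\to\infty}\mathbb E[e^{\frac{\beta}{4}pT}|\bar B_T|^p]=0$, which is exactly \eqref{trans:B}.
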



The system \eqref{BSDE:C-inf} is also a special case of \eqref{BSDE:Y-infinite} where all coefficients vanish, leaving only the following nonhomogeneous term:
\begin{equation*}
	\begin{split}
			f^{C,i}_t=&~- \frac{1}{4 a^{i}_t}\left( (\rho_t^i  +\mu_t-\sigma_{1,t}^2   ) \bar{B}_t^{i} + 2   \sigma_{1,t} \widetilde\sigma_t^i \bar{A}_t^{i} - \sigma_{1,t} \widetilde\sigma_t^i   +	\sigma_{1,t} Z^{\bar B,i}_t	  \right)^2  \\
		&~+ (\widetilde\sigma_{t}^i)^2  \left( \bar{A}_t^{i} - \frac{1}{2  } \right) \frac{1}{\gamma_t}  + \widetilde\sigma_t^i \left( Z_t^{\bar{B},i} -  \sigma_{1,t} \bar{B}_t^{i}  \right) \frac{1}{\gamma_t} .
	\end{split}
\end{equation*}
By Theorem \ref{thm:B}, Proposition \ref{prop:A}, Corollary \ref{coro:gamma-a} and Lemma \ref{lemma:gamma}, we have the following estimate for $f^{C}:$ 
	\begin{equation}\label{estimate:f-C}
		\begin{aligned}
			 \mathbb E \left[  \left(\int_0^\infty e^\frac{\beta s}{4}|f^{C}_s| \,ds \right)^p\right] < \infty, \quad p\geq 1.
		\end{aligned}
	\end{equation}
Thus all the coefficients in the system of $C$ satisfy the corresponding requirements in Assumption \ref{ass:general-BSDE} with $K_{f^{C}}=-\frac{\beta}{4}$. Setting $K_{C}=-\frac{\beta}{8}$, $K_{Z^C}=0$, we then obtain the existence result for \eqref{BSDE:C-inf} by applying Theorem \ref{thm:general-YZ}.  
\begin{theorem}\label{thm:C}
The infinite-horizon BSDE system \eqref{BSDE:C-inf}-\eqref{trans:C} admits a solution $(C,Z^{C})$ in $S^{2,-\frac{\beta}{8}} \times  M^{2,0}$.
\end{theorem}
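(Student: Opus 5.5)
The plan is to realise \eqref{BSDE:C-inf} as the special case of \eqref{BSDE:Y-infinite} with $\nu^i\equiv 0$, $\kappa\equiv 0$, $g\equiv 0$ and nonhomogeneous term $f^{C,i}$, and then to apply Theorem \ref{thm:general-YZ}. Here $\bar A$ is supplied by Proposition \ref{prop:A} and $(\bar B,Z^{\bar B})$ by Theorem \ref{thm:B}.

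\textbf{Assumption \ref{ass:general-BSDE}.} First fix the constants: take $\bar K=\bar c=0$, $r=1$, and any $K_\phi<0$ (so $\phi\equiv 0$ is admissible). Conditions (i)--(iv) are then trivial: $g=0$ is BMO, $\kappa=0$ satisfies the sign condition, and $\nu=0$ lies in $[\bar K,\bar c]$.

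The only substantive point is (v) with $K_{f^C}=-\frac{\beta}{4}<\bar K=0$, i.e.
\[
\mathbb E\Big[\Big(\int_0^\infty e^{\frac{\beta}{4}s}|f^C_s|\,ds\Big)^{p/2}\Big]<\infty \quad\text{for all } p\ge1,
\]
which is exactly \eqref{estimate:f-C}. I would verify \eqref{estimate:f-C} term by term.

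\emph{The quadratic term.} Corollary \ref{coro:gamma-a} bounds $1/a^i$ (in the form $\gamma/a$ bounded, together with $1/\gamma$ from Lemma \ref{lemma:gamma}). Theorem \ref{thm:B} gives $|\bar B_s|\le ce^{-\beta s/4}$. Since $\bar A$ is bounded and $|\sigma_1\widetilde\sigma|\le ce^{-(L+\beta/2)s}$, the quantity $(\cdots)^2$ is dominated by
\[
e^{-\beta s/2}+e^{-2Ls}|Z^{\bar B}_s|^2 .
\]
Multiplying by $e^{\beta s/4}$ gives an integrable deterministic part. The $Z^{\bar B}$ part is controlled by $\int e^{\beta s/4}|Z^{\bar B}_s|^2\,ds$, which has all moments by the energy inequality (Lemma \ref{lemma:energy}) because $Z^{\bar B}\in H^{2,-\beta/8}_{\text{BMO}}$.

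\emph{The remaining terms.} The term $(\widetilde\sigma^i)^2/\gamma$ is bounded by $ce^{-\beta s}\sup_t|1/\gamma_t|$, which has all moments by Lemma \ref{lemma:gamma}. For $\widetilde\sigma Z^{\bar B}/\gamma$, apply Cauchy--Schwarz:
\[
\int e^{\beta s/4}e^{-\beta s/2}|Z^{\bar B}_s|\,ds\le c\Big(\int e^{\beta s/4}|Z^{\bar B}_s|^2\,ds\Big)^{1/2},
\]
then combine with $\sup_t|1/\gamma_t|$ via H\"older.

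\textbf{Main obstacle.} This is the bookkeeping of the weights, checking that every exponent appearing is compatible. In particular, the $\sigma_1^2|Z^{\bar B}|^2$ term needs $2L$ plus the BMO weight $\beta/4$ to dominate $\beta/4$, which holds since $L>0$.

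\textbf{Conclusion.} Theorem \ref{thm:general-YZ}, applied with $K_Y=-\frac{\beta}{8}$ and $K_Z=0$ (note that $K_f<K_Y<K_Z\le\bar K$; if strict inequality $K_Z<\bar K$ is needed, take $\bar K$ slightly positive, which is allowed since $\nu=0\ge\bar K$ fails, so instead take $K_Z$ slightly below $0$ and use $M^{2,K_Z}\subset M^{2,0}$), yields
\[
(C,Z^C)\in S^{2,-\beta/8}\times M^{2,0}
\]
together with
\[
\lim_{T\to\infty}\mathbb E\big[e^{\frac{\beta}{4}T}|C_T|^2\big]=0
\]
from \eqref{trans:general-BSDE} with $p=2$. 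This is \eqref{trans:C}.
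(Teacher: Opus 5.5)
Your proposal is correct and follows the paper's own route: treat \eqref{BSDE:C-inf} as the case $\nu=\kappa=g=0$ of \eqref{BSDE:Y-infinite}, verify Assumption \ref{ass:general-BSDE}(v) through \eqref{estimate:f-C} with $K_{f^C}=-\frac{\beta}{4}$, and apply Theorem \ref{thm:general-YZ} with $K_Y=-\frac{\beta}{8}$, which gives both the solution and \eqref{trans:C} (take $p=2$). Your term-by-term check of \eqref{estimate:f-C} supplies the details the paper leaves implicit; you skip the term $\sigma_1\widetilde\sigma^i\bar B^i/\gamma$, but it is harmless since it is bounded by $ce^{-(L+3\beta/4)s}/\gamma_s$. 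Your choice of $K_Z$ slightly below $0$, combined with $M^{2,K_Z}\subset M^{2,0}$, is more careful than the paper's literal choice $K_{Z^C}=0$, which violates the requirement $K_Z<\bar K\le 0$ forced by $\nu\equiv 0$.
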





\section{The solvability of the control problem \eqref{cost-inf-continuous}-\eqref{state-inf-continuous}}\label{sec:control}

In this section, we return to the infinite-horizon stochastic control problem \eqref{cost-inf-continuous}–\eqref{state-inf-continuous}. Our goal is to address the solvability of the control problem in two parts.

First, leveraging the BSDE results developed in Section~\ref{sec:BSDEs}, we prove that the value function of the control problem is finite and thus well-posed (see Definition~\ref{def:wellposed}). To this end, we begin with the corresponding finite-horizon stochastic control problem and reformulate the cost functional as the sum of a fixed cost and a completed square term. We then pass to the infinite-horizon limit.

Second, under an additional assumption on the BSDE solution, we show that the fixed cost coincides with the value function of the control problem and derive the corresponding optimal strategy.

\subsection{Wellposedness of the control problem}
In this section, we prove that the control problem is wellposed in the following sense (refer to \cite{Yong-Zhou-1999}): 
\begin{definition}\label{def:wellposed}
	We say that the control problem \eqref{cost-inf-continuous}-\eqref{state-inf-continuous} is wellposed if 
	\begin{equation}\label{finite-value}
		V_t( \mathcal X_{t-},i  ):=\inf_{\widetilde X\in\mathscr A} J(t,\mathcal X_{t-},i;\widetilde X)>-\infty.
	\end{equation}
\end{definition}
From the expression \eqref{cost-inf-continuous}, it is not immediately clear whether \eqref{finite-value} holds. To verify \eqref{finite-value}, we first reformulate the cost functional \eqref{cost-inf-continuous} by considering its finite-horizon counterpart and then letting the horizon tend to infinity.

\begin{lemma}\label{lemma:reformulation-T}
For any $\widetilde X\in\mathscr A$, define
\[
    J^T(t,\mathcal X_{t-},i;\widetilde X^T) = \mathbb E_t\left[\int_t^T \left(-\widetilde Y^T_{s-}\,d\widetilde X^T_s+ \frac{\gamma_s}{2}d[\widetilde X^T]_s - \widetilde\sigma^{\alpha_s}_s \,d[\widetilde X^T,W]_s\right)	+\int_t^T\left(\lambda^{\alpha_s}_s (\widetilde{X}^T_s)^2-\frac{\phi}{2}\widetilde Y^T_{s}\widetilde{X}^T_s\right)\,ds	 \right],
\]
where $\widetilde X^T_s=\widetilde X_s\mathbf{1}_{[t,T)}(s)$\footnote{ This is essentially a liquidation strategy. For a non-liquidation alternative, additional terms of the form $-(\widetilde Y_{T-}+\frac{\gamma_T}{2}\widetilde X_{T-}) \widetilde X_{T}-\frac{\gamma_T}{2}\widetilde X_{T}( \widetilde X_{T}-\widetilde X_{T-})$ in the finite-horizon cost needs to be introduced and can be shown to vanish due to the integrability \eqref{integrability:X} and \eqref{integrability:P}. The remainder of the analysis will follow a similar approach.} and $\widetilde Y^T$ is the corresponding state. 
	The cost with the finite-horizon and truncated strategy $\widetilde X^T$, $J^T(t,\mathcal X_{t-},i;\widetilde X^T)$, can be reformulated as 
	\begin{equation}\label{cost-T}
		\begin{split}
	J^{T}(t, \mathcal X_{t-},i ;\widetilde X^T)=&~\mathbb{E}_t\left[   \int_{t}^{T} \frac{1}{a_s^{\alpha_s}}(I_{s}^{\bar A,\alpha_s}\mathcal X_{s}+I^{\bar B,\alpha_s}_s)^{2}\,ds \right]+\mathcal X^\top_{t-} A^{i}_t\mathcal X_{t-} +\mathcal X^\top_{t-} B^{i}_t+C^{i}_t\\
	&+\mathbb E_t\left[ \left(\frac{1}{2}-\bar A^{\alpha_{T-}}_T\right) \gamma_T\widetilde{P}^2_{T} -\bar B^{\alpha_{T-}}_T\widetilde{P}_{T} -C^{\alpha_{T-}}_T 	\right],
	\end{split}
\end{equation}		
where the processes $I^{\bar A}$ and $I^{\bar B}$ are given by 
\begin{equation}\label{def:I-A}
	I^{\bar A,i}_t=\begin{pmatrix} -\gamma_t \mu_t\bar{A}_t^{i}-\gamma_t\sigma_{1,t}Z^{\bar{A},i}_t-\rho_t^i \gamma_t\bar{A}_t^{i}-\lambda_t^i \\
		-\rho_t^i \bar{A}_t^{i} +\rho_t^i-\mu_t\bar{A}_t^{i}    +\sigma_{1,t}^2\bar A^i_t   -Z^{\bar{A},i}_t \sigma_{1,t}+\frac{\mu_t}{2}+\frac{\phi}{2}-\frac{ \sigma_{1,t}^2}{2}\end{pmatrix}^\top,
\end{equation}
respectively,
\begin{equation}\label{def:I-B}
	I^{\bar B,i}_t =   -\frac{1}{2}\rho_t^{i} \bar{B}_{t}^{i}-\sigma_{1,t}\widetilde\sigma_t^i\bar{A}_t^{i}+\frac{1}{2}\sigma_{1,t}\widetilde\sigma_t^i       + \frac{1}{2}\sigma_{1,t}^2\bar B^i_t - \frac{1}{2}{\sigma}_{1,t} Z^{\bar B,i}_t - \frac{1}{2}\mu_t\bar B^i_t.
\end{equation} 
\end{lemma}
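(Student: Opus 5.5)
The plan is a completion-of-squares argument: apply It\^o's formula for regime-switching processes to the quadratic candidate $\mathcal X^\top A^{\alpha}\mathcal X+\mathcal X^\top B^{\alpha}+C^{\alpha}$. Because $\widetilde X$ is a general semimartingale, $\mathcal X$ jumps whenever $\widetilde X$ does. To avoid dealing with these jumps, I first pass to the scaled hidden deviation $\widetilde P=\widetilde X+\frac1\gamma\widetilde Y$. From \eqref{state-inf-continuous} and \eqref{eq:gamma}, the terms $-\gamma\,d\widetilde X$ and $-d[\gamma,\widetilde X]$ in $d\widetilde Y$ cancel against the corresponding terms coming from $d(\widetilde Y/\gamma)$, so that
\[
d\widetilde P_s=\Big(\text{linear in }\widetilde X_s,\widetilde Y_s\Big)\,ds+\frac{\widetilde\sigma^{\alpha_s}_s-\sigma_{1,s}\widetilde Y_s}{\gamma_s}\,dW_s .
\]
Thus $\widetilde P$ is continuous between regime switches. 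Using the structural relations $A_{11}=\gamma A_{21}$, $A_{12}=\gamma A_{22}+\frac12$ and $B_1=\gamma B_2$, I rewrite
\[
\mathcal X^\top A^i\mathcal X+\mathcal X^\top B^i=\gamma\Big(\bar A^i-\tfrac12\Big)\widetilde P^2+\bar B^i\widetilde P+(\text{terms in }\widetilde X,\widetilde Y).
\]
These extra terms should be exactly those produced by the cost integrals $-\widetilde Y_{s-}d\widetilde X_s+\frac{\gamma}{2}d[\widetilde X]$. Concretely, one expects a pathwise identity of the type
\[
-\widetilde Y_{-}\,d\widetilde X+\tfrac{\gamma}{2}\,d[\widetilde X]=-\tfrac12\,d\Big(\tfrac{1}{\gamma}\widetilde Y^2\Big)+(\text{$ds$, $d[W]$ terms}),
\]
which follows from $d\widetilde Y=-\gamma\,d\widetilde X+\dots$ and is checked by It\^o's product rule, with the jump terms cancelling since $\Delta\widetilde Y=-\gamma\Delta\widetilde X$. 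This converts the cost into one involving only $ds$-integrals of quadratic expressions in $(\widetilde X,\widetilde P)$ plus boundary terms in $\widetilde Y^2/\gamma$.

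Next, I apply \cite[Lemma 4.3]{Hu-Liang-Tang} on $[t,T]$ to
\[
\Phi_s:=\gamma_s\Big(\bar A^{\alpha_s}_s-\tfrac12\Big)\widetilde P_s^2+\bar B^{\alpha_s}_s\widetilde P_s+C^{\alpha_s}_s .
\]
Its jumps come only from $\alpha$, and their compensators produce the $\sum_j q^{ij}(\cdot)^j$ terms. Substituting the drivers \eqref{BSDE:A-inf}, \eqref{BSDE:B-inf} and \eqref{BSDE:C-inf} and adding the running cost, the $ds$-integrand should collapse, after grouping the terms in $\widetilde X$, to
\[
a\widetilde X^2+2\widetilde X\big(I^{\bar A}\mathcal X+I^{\bar B}-a\widetilde X\big)+\dots=\frac1a\big(I^{\bar A}\mathcal X+I^{\bar B}\big)^2 .
\]
This is where the specific quadratic $-\frac{\gamma}{a}(\cdot)^2$ term of the $A$-equation, the product term of the $B$-equation and the $-\frac1{4a}(\cdot)^2$ term of the $C$-equation are designed to complete the square. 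At time $t$, evaluating $\Phi_{t-}$ together with the $\widetilde Y^2/\gamma$ boundary term gives back $\mathcal X_{t-}^\top A^i_t\mathcal X_{t-}+\mathcal X_{t-}^\top B^i_t+C^i_t$. At time $T$, the truncation $\widetilde X^T_T=0$ gives $\widetilde P_T=\widetilde Y_T/\gamma_T$, so the $\widetilde Y^2/\gamma$ boundary term combines with $\Phi_T$ into the stated terminal expression, with $\alpha_{T-}$ because of left limits.

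The remaining task is to show that the stochastic integrals (against $dW$ and the compensated Markov-chain martingales) have zero conditional expectation. On the finite interval $[t,T]$, I would use the admissibility bounds \eqref{integrability:X} and \eqref{integrability:P}, the boundedness of $\bar A$ and $\gamma/a$ from Proposition \ref{prop:A} and Corollary \ref{coro:gamma-a}, the spaces of Theorems \ref{thm:B} and \ref{thm:C}, the BMO property of $Z^{\bar A}$, and Lemma \ref{lemma:gamma}, together with BDG, to show these local martingales are true martingales.

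The main obstacle I expect is the algebraic verification that the drift collapses exactly to the perfect square $\frac1a(I^{\bar A}\mathcal X+I^{\bar B})^2$. This requires tracking the $\sigma_{1}$ and $[\gamma,\widetilde X]$ cross-variation contributions and the $-\frac{\phi}{2}\widetilde Y\widetilde X$ discount term. I would organise it by first writing the drift of $\Phi$ as a quadratic form in $(\widetilde X,\widetilde P)$ and then matching coefficients of $\widetilde X^2$, $\widetilde X\widetilde P$, $\widetilde P^2$, $\widetilde X$, $\widetilde P$ and $1$ separately.
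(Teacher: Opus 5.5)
Your route is the paper's: write the quadratic candidate through the continuous process $\widetilde P$, apply \cite[Lemma 4.3]{Hu-Liang-Tang} to the $\widetilde P$-part, handle the $\widetilde X$-dependent remainder with the semimartingale It\^o formula, complete the square, and take conditional expectations. The step that fails as written is the boundary bookkeeping.

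First, your pathwise identity has the wrong sign. A single block trade $\Delta\widetilde X=\delta$ costs $-\widetilde Y_-\delta+\frac{\gamma}{2}\delta^2=\frac{1}{2\gamma}\big((\widetilde Y_--\gamma\delta)^2-\widetilde Y_-^2\big)$. In general,
\[
-\widetilde Y_{s-}\,d\widetilde X_s+\tfrac{\gamma_s}{2}\,d[\widetilde X]_s-\widetilde\sigma^{\alpha_s}_s\,d[\widetilde X,W]_s=-\,d\Big(\tfrac{\gamma_s}{2}\widetilde X_s^2+\widetilde X_s\widetilde Y_s\Big)+(\cdots)\,ds+(\cdots)\,dW_s .
\]
The $[\gamma,\widetilde X]$ contributions cancel, and the cost's $-\widetilde\sigma\,d[\widetilde X,W]$ absorbs the $\widetilde\sigma\,d[\widetilde X,W]$ coming from $[\widetilde X,\widetilde Y]$. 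Since $\frac{\gamma}{2}\widetilde X^2+\widetilde X\widetilde Y=\frac{\gamma}{2}\widetilde P^2-\frac{1}{2\gamma}\widetilde Y^2$ with $\widetilde P$ continuous, this equals $+\frac12\,d(\widetilde Y^2/\gamma)$ modulo $ds$- and $dW$-terms.

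Second, and more importantly, even with the sign corrected, a $\widetilde Y^2/\gamma$ boundary term does not pair with your $\Phi=\gamma(\bar A^\alpha-\frac12)\widetilde P^2+\bar B^\alpha\widetilde P+C^\alpha$. At the initial time you get
\[
\Phi_{t-}-\tfrac{1}{2\gamma_t}\widetilde Y_{t-}^2=\mathcal X_{t-}^\top A^i_t\mathcal X_{t-}+\mathcal X_{t-}^\top B^i_t+C^i_t-\tfrac{\gamma_t}{2}\widetilde P_{t-}^2 .
\]
At $T$ the coefficient of $\gamma_T\widetilde P_T^2$ comes out as $1-\bar A^{\alpha_{T-}}_T$ instead of $\frac12-\bar A^{\alpha_{T-}}_T$. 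Correspondingly, the $ds$-integrand you obtain would be off by the drift of $\frac{\gamma}{2}\widetilde P^2$ and would not collapse to $\frac1a(I^{\bar A}\mathcal X+I^{\bar B})^2$.

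The consistent pairing is the one in your first paragraph. Keep $\Phi$ and use the boundary term $\frac{\gamma}{2}\widetilde X^2+\widetilde X\widetilde Y$. It vanishes at $T$ because $\widetilde X^T_T=0$, and it satisfies $\Phi_{t-}+\frac{\gamma_t}{2}\widetilde X_{t-}^2+\widetilde X_{t-}\widetilde Y_{t-}=\mathcal X_{t-}^\top A^i_t\mathcal X_{t-}+\mathcal X_{t-}^\top B^i_t+C^i_t$. The terminal term is then exactly $-\Phi_T$, as in \eqref{cost-T}.

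Equivalently, as the paper does via $\mathcal X^\top A^i\mathcal X=\gamma\bar A^i\widetilde P^2-\frac{1}{2\gamma}\widetilde Y^2$, apply the regime-switching It\^o formula to $\gamma\bar A^\alpha\widetilde P^2+\bar B^\alpha\widetilde P+C^\alpha$ and pair it with $-\frac{1}{2\gamma}\widetilde Y^2$. With this correction, the rest of your plan (coefficient matching and the true-martingale argument) matches the paper's proof.
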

\begin{proof}
	 For any admissible strategy $\widetilde X$, noting that $(\widetilde X^T,\widetilde Y^T)=(\widetilde X,\widetilde Y)$ on $[0,T)$, we split the cost arising from the last jump from the cost:
     \begin{equation}
        \begin{split}
        &~\int_t^T \left(-\widetilde Y^T_{s-}\,d\widetilde X^T_s+ \frac{\gamma_s}{2}d[\widetilde X^T]_s - \widetilde\sigma^{\alpha_s}_s \,d[\widetilde X^T,W]_s\right)	+\int_t^T\left(\lambda^{\alpha_s}_s (\widetilde{X}^T_s)^2-\frac{\phi}{2}\widetilde Y^T_{s}\widetilde{X}^T_s\right)\,ds \\
        =&~\int_t^{T-} \left(-\widetilde Y_{s-}\,d\widetilde X_s+ \frac{\gamma_s}{2}d[\widetilde X]_s - \widetilde\sigma^{\alpha_s}_s \,d[\widetilde X,W]_s\right)	+\int_t^T\left(\lambda^{\alpha_s}_s \widetilde{X}_s^2-\frac{\phi}{2}\widetilde Y_{s}\widetilde{X}_s\right)\,ds\\
        &~-(\widetilde Y_{T-}-\frac{\gamma_T}{2}\Delta \widetilde X^T_T)\Delta \widetilde X^T_T\\
        =&~\int_t^{T-} \left(-\widetilde Y_{s-}\,d\widetilde X_s+ \frac{\gamma_s}{2}d[\widetilde X]_s - \widetilde\sigma^{\alpha_s}_s \,d[\widetilde X,W]_s\right)	+\int_t^T\left(\lambda^{\alpha_s}_s \widetilde{X}_s^2-\frac{\phi}{2}\widetilde Y_{s}\widetilde{X}_s\right)\,ds\\
        &~+(\widetilde Y_{T-}+\frac{\gamma_T}{2}  \widetilde X_{T-}) \widetilde X_{T-}  \quad (\textrm{since }\widetilde X^T_T =0)\\
        =&~ \int_t^{T-} \left(-\widetilde Y_{s-}\,d\widetilde X_s+ \frac{\gamma_s}{2}d[\widetilde X]_s - \widetilde\sigma^{\alpha_s}_s \,d[\widetilde X,W]_s\right)	+\int_t^T\left(\lambda^{\alpha_s}_s \widetilde{X}_s^2-\frac{\phi}{2}\widetilde Y_{s}\widetilde{X}_s\right)\,ds\\
        &~+(\widetilde Y_{T-}+\frac{\gamma_T}{2}  \widetilde X_{T-}) \widetilde X_{T-}  -
        (\mathcal X^\top_{T-} A^{\alpha_{T-}}_{T}\mathcal X_{T-} +\mathcal X^\top_{T-} B^{\alpha_{T-}}_T+C^{\alpha_{T-}}_T) \\
        &~+\mathcal X^\top_{T-} A^{\alpha_{T-}}_{T}\mathcal X_{T-} +\mathcal X^\top_{T-} B^{\alpha_{T-}}_T+C^{\alpha_{T-}}_T.
        \end{split}
     \end{equation}
Observe that 
\begin{equation}\label{eq:XAX-XB}
	\mathcal X^\top A^i\mathcal X = \gamma \bar A^i \left( \widetilde X + \frac{1}{\gamma}\widetilde Y		   \right)^2 - \frac{1}{2\gamma}\widetilde Y^2=\gamma \bar A^i   \widetilde P^2- \frac{1}{2\gamma}\widetilde Y^2,\quad \mathcal X^\top B^i = \bar B^i \widetilde X+\frac{\bar B^i}{\gamma}\widetilde Y =  \bar B^i \widetilde P,
\end{equation}
	where we recall that $\widetilde P:=\widetilde X+\frac{1}{\gamma}\widetilde Y$, that $\bar A$ is the solution to \eqref{BSDE:A-inf} constructed in Proposition \ref{prop:A}, and that $\bar B$ is the solution to \eqref{BSDE:B-inf} constructed in Theorem \ref{thm:B}. Note that both $\gamma \bar A^\alpha   \widetilde P^2$ and $\bar B^\alpha \widetilde P$ satisfy the condition in \cite[Lemma 4.3]{Hu-Liang-Tang}. Therefore, applying integration by parts formula in \cite[Lemma 4.3]{Hu-Liang-Tang} and \cite[Theorem 36]{Protter-2005} to $\mathcal X^\top A^{\alpha} \mathcal X  +\mathcal X^\top  B^{\alpha } +C^{\alpha } $, as well as similar analysis as in \cite[Proposition 4.1]{FHX-2023}, we obtain 
\begin{equation}\label{eq:cost-rewritten-1}
    \begin{split}
        &~\int_t^{T-} \left(-\widetilde Y_{s-}\,d\widetilde X_s+ \frac{\gamma_s}{2}d[\widetilde X]_s - \widetilde\sigma^{\alpha_s}_s \,d[\widetilde X,W]_s\right)	+\int_t^T\left(\lambda^{\alpha_s}_s \widetilde{X}_s^2-\frac{\phi}{2}\widetilde Y_{s}\widetilde{X}_s\right)\,ds\\
        &~+(\widetilde Y_{T-}+\frac{\gamma_T}{2}  \widetilde X_{T-}) \widetilde X_{T-}  -
        (\mathcal X^\top_{T-} A^{\alpha_{T-}}_{T}\mathcal X_{T-} +\mathcal X^\top_{T-} B^{\alpha_{T-}}_T+C^{\alpha_{T-}}_T) \\
        &~+\mathcal X^\top_{T-} A^{\alpha_{T-}}_{T}\mathcal X_{T-} +\mathcal X^\top_{T-} B^{\alpha_{T-}}_T+C^{\alpha_{T-}}_T\\
        =&~   \int_t^{T-} \left(-\widetilde Y_{s-}\,d\widetilde X_s+ \frac{\gamma_s}{2}d[\widetilde X]_s - \widetilde\sigma^{\alpha_s}_s \,d[\widetilde X,W]_s\right)	+\int_t^T\left(\lambda^{\alpha_s}_s \widetilde{X}_s^2-\frac{\phi}{2}\widetilde Y_{s}\widetilde{X}_s\right)\,ds          \\
        &~ +  \left(\frac{1}{2}-\bar A_T^{\alpha_{T-}}\right) \gamma_T\widetilde{P}^2_{T} -\bar B_T^{\alpha_{T-}}\widetilde{P}_{T} -C^{\alpha_{T-}}_T               \\
        &~   +\mathcal X^\top_{T-} A^{\alpha_{T-}}_{T}\mathcal X_{T-} +\mathcal X^\top_{T-} B^{\alpha_{T-}}_T+C^{\alpha_{T-}}_T           \\
        =&~ \int_t^T \frac{1}{a^{\alpha_s}_s} \left(I^{\bar A,\alpha_s}_s\mathcal X_s+I^{\bar B,\alpha_s}_s\right)^2\,ds +\int_t^T \mathcal X_{s}^{\top} Z^{A,\alpha_s}_s \mathcal X_{s} \,dW_s +\int_t^T \mathcal X_{s}^{\top} Z^{B,\alpha_s}_s  \,dW_s+\int_t^T Z^{C,\alpha_s}_s  \,dW_s\\
			&~+\int^{T}_{t} (A_s^{\alpha_s}\mathcal X_{s})^{\top}\mathcal D^{\alpha_s}_s  \,dW_s+\int^{T}_{t} (B_s^{\alpha_s})^{\top}\mathcal D^{\alpha_s}_s  \,dW_s+\int_t^T \sum_{j,j'\in\cal M} \gamma^{}_s\widetilde P_{s}^2 ( \bar A^j_{s} - \bar A^{j'}_{s} )\mathbf{1}_{\{  \alpha_{s-}=j' \}} \,d\widetilde N^{j'j}_s\\
			&~+ \mathcal X^\top_{t-}A^i_t\mathcal X_{t-} + \mathcal X^\top_{t-}B^i_t+C^i_t+\left(\frac{1}{2}-\bar A_T^{\alpha_{T-}}\right) \gamma_T\widetilde{P}^2_{T} -\bar B_T^{\alpha_{T-}}\widetilde{P}_{T} -C^{\alpha_{T-}}_T.
    \end{split}
\end{equation}
Taking conditional expectations, we obtain \eqref{cost-T}.  
\end{proof}
Next, we reformulate the cost functional \eqref{cost-inf-continuous} by letting $T\rightarrow\infty$ in \eqref{cost-T}.
\begin{theorem}\label{thm:reformulation-cost-inf}
	For any $\widetilde X\in\mathscr A$, the cost functional for the control problem \eqref{cost-inf-continuous}-\eqref{state-inf-continuous} starting from $(t,\mathcal X_{t-},i)$ can be rewritten as
	\begin{equation}\label{eq:cost-inf}
		\begin{split}
			J(t,\mathcal X_{t-},i; \widetilde X)
			=&~		 \mathbb E_t\left[ 	\int_t^\infty \frac{1}{a^{\alpha_s}_s} \left(I^{A,\alpha_s}_s\mathcal X_s+I^{B,\alpha_s}_s\right)^2\,ds 	 	\right] + \mathcal X^\top_{t-}A_t^{i}\mathcal X_{t-} + \mathcal X^\top_{t-}B_t^{i}+C_t^{i},
		\end{split}
	\end{equation}
	where we recall ${\cal X}=(\widetilde X,\widetilde Y)^\top$.
	
	In particular, $\inf_{\widetilde X\in\mathscr A} J(t,\mathcal X_{t-},i;\widetilde X)>-\infty$, and infinite-horizon control problem \eqref{cost-inf-continuous}-\eqref{state-inf-continuous} is wellposed in the sesne of Definition \ref{def:wellposed}. Moreover, the optimal strategy is unique whenever it exists.
\end{theorem}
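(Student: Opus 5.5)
The plan is to pass to the limit $T\to\infty$ in the finite-horizon identity \eqref{cost-T} of Lemma \ref{lemma:reformulation-T}. Fix $\widetilde X\in\mathscr A$. There are three tasks: (a) show $J^T(t,\mathcal X_{t-},i;\widetilde X^T)\to J(t,\mathcal X_{t-},i;\widetilde X)$; (b) show that the terminal term $\mathbb E_t[(\frac12-\bar A^{\alpha_{T-}}_T)\gamma_T\widetilde P_T^2-\bar B^{\alpha_{T-}}_T\widetilde P_T-C^{\alpha_{T-}}_T]$ vanishes; (c) pass to the limit in the quadratic term by monotone convergence. Task (c) only needs $a^{i}_s>0$, which follows from Assumption \ref{ass:standing}(iii) together with the positivity and boundedness of $\bar A$ (Proposition \ref{prop:A}, Corollary \ref{coro:gamma-a}). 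Then $\int_t^T\frac{1}{a}(I^{\bar A}\mathcal X+I^{\bar B})^2ds$ increases to the infinite-horizon integral.

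For (b), split the terminal term into three parts.
\begin{itemize}
\item For the first part, use boundedness of $\bar A$ and the admissibility condition \eqref{integrability:P}: $\mathbb E[\gamma_T\widetilde P_T^2]\le e^{-2\varphi T}\mathbb E[\sup_s e^{4\varphi s}\gamma_s^2\widetilde P_s^4]^{1/2}\to0$.
\item For the second part, apply Cauchy--Schwarz: $\mathbb E|\bar B_T\widetilde P_T|\le \mathbb E[\gamma_T\widetilde P_T^2]^{1/2}\mathbb E[\bar B_T^2/\gamma_T]^{1/2}$. The factor $\mathbb E[\bar B_T^2/\gamma_T]$ is bounded using $\bar B\in L^{\infty,-\beta/4}$ (Theorem \ref{thm:B}) and Lemma \ref{lemma:gamma}; it actually decays like $e^{-\beta T/4}$.
\item For the third part, the transversality condition \eqref{trans:C} gives $\mathbb E|C_T|\to0$.
\end{itemize}
Since these are conditional expectations, I would prove convergence in $L^1$, which yields a.s. convergence along a subsequence. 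That suffices because the limit of the left-hand side is identified independently. A point to check is that the regime index $\alpha_{T-}$ is harmless, since all the bounds are uniform over $i\in\mathcal M$.

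Task (a) is the main obstacle. We have $J^T-J$ equal to the tail integrals over $[T,\infty)$ plus the liquidation jump cost $(\widetilde Y_{T-}+\frac{\gamma_T}{2}\widetilde X_{T-})\widetilde X_{T-}$. The jump cost tends to $0$ in $L^1$ by \eqref{integrability:X}, \eqref{integrability:P} (writing $\widetilde Y=\gamma(\widetilde P-\widetilde X)$ and using the weights $e^{2\varphi t}$). For the tail of the cost I would not estimate the stochastic integrals directly. Instead I would use the pathwise rewriting \eqref{eq:cost-rewritten-1} on $[T,T']$. This expresses the tail cost as a nonnegative quadratic integral, plus martingale increments, plus boundary terms at $T$ and $T'$; the boundary terms vanish by the argument of (b). The martingale terms have zero conditional mean provided they are true martingales. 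To verify this I would check square integrability of integrands such as $\mathcal X^\top Z^{A}\mathcal X$, $\mathcal X^\top Z^B$, $Z^C$ and $\gamma\widetilde P^2(\bar A^j-\bar A^{j'})$, using BMO properties of $Z^{\bar A},Z^{\bar B}$, the energy inequality (Lemma \ref{lemma:energy}) and the weighted integrability in $\mathscr A$, where $\varphi<\beta/8$ is chosen precisely to absorb the weights. Combining these shows that the tail is bounded by the tail of an integrable nonnegative integral, so (a) follows. This yields \eqref{eq:cost-inf}; in particular it gives the definition of $J$ as a limit of the well-defined $J^T$.

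Finally, by \eqref{eq:cost-inf} and $a>0$, $J\ge \mathcal X_{t-}^\top A^i_t\mathcal X_{t-}+\mathcal X_{t-}^\top B^i_t+C^i_t>-\infty$, which is well-posedness. For uniqueness: in $\mathcal X=(\widetilde X,\widetilde Y)$ the map $\widetilde X\mapsto\mathcal X$ is affine, so $J$ is a quadratic functional with nonnegative quadratic part. I would show this part is strictly convex. If $\widetilde X^1,\widetilde X^2$ are both optimal, convexity forces $I^{\bar A}(\mathcal X^1-\mathcal X^2)=0$ $ds\otimes d\mathbb P$-a.e. Writing this in terms of $\widetilde P^1-\widetilde P^2$, which satisfies a linear SDE with zero initial condition (same $\mathcal X_{t-}$), this gives $\widetilde P^1=\widetilde P^2$ and hence $\widetilde X^1=\widetilde X^2$.
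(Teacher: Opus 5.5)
Your proposal follows essentially the same route as the paper. You pass to the limit in \eqref{cost-T}, using monotone convergence for the quadratic term and the admissibility conditions together with the bounds on $\bar A$, $\bar B$, $C$ for the terminal term. For the left-hand side you use the vanishing of the liquidation jump (the paper's \eqref{convergence:XY}) plus the pathwise identity \eqref{eq:cost-rewritten-1}, whose stochastic integrals are true martingales. Your tail formulation on $[T,T']$ is just a rearrangement of the paper's dominated-convergence step on $[t,T]$.

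\textbf{A verification step that fails as written.} Square integrability of the integrands is not implied by \eqref{integrability:X}--\eqref{integrability:P}. These conditions control only $\mathbb E[\sup_t e^{2\varphi t}\gamma_t\widetilde X_t^2]$ and $\mathbb E[\sup_t e^{4\varphi t}\gamma_t^2\widetilde P_t^4]$. For instance, since $A_{22}=(\bar A-\frac12)/\gamma$, the integrand $\mathcal X^\top Z^{A}\mathcal X$ contains a term proportional to $\sigma_1\gamma\widetilde X^2$, and $(A\mathcal X)^\top\mathcal D$ contains $\frac12\widetilde\sigma\widetilde X$. Squaring these leads respectively to $\mathbb E[(\sup_t e^{2\varphi t}\gamma_t\widetilde X_t^2)^2]$ and $\mathbb E[\sup_t e^{2\varphi t}\gamma_t\widetilde X_t^2\int_0^\infty \widetilde\sigma_s^2\gamma_s^{-1}\,ds]$, neither of which is controlled.

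The fix is to use the BDG inequality with exponent $1$ followed by Cauchy--Schwarz. For example, $\mathbb E[(\int_0^\infty \gamma_s^2\widetilde P_s^4|Z^{\bar A}_s|^2\,ds)^{1/2}]\le \mathbb E[\sup_s e^{4\varphi s}\gamma_s^2\widetilde P_s^4]^{1/2}\,\mathbb E[\int_0^\infty e^{-4\varphi s}|Z^{\bar A}_s|^2\,ds]^{1/2}$. Similarly, $\mathbb E[(\int_0^\infty \sigma_{1,s}^2\gamma_s^2\widetilde X_s^4\,ds)^{1/2}]\le c\,\mathbb E[\sup_s e^{2\varphi s}\gamma_s\widetilde X_s^2]$. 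This shows that each stochastic integral has an integrable supremum and hence is a uniformly integrable martingale, which is exactly what the paper asserts.

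Your bound of the tail by the tail of an integrable quadratic integral also presupposes $\mathbb E_t[\int_t^\infty \frac{1}{a}(\cdots)^2\,ds]<\infty$. In the opposite case both sides are $+\infty$, and this follows from the same pathwise decomposition with monotone convergence.

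\textbf{Uniqueness.} The paper's proof contains no argument for the uniqueness claim, so your strict-convexity argument is a genuine addition, and it is correct. Write $\delta$ for differences of the two candidate optimizers. From $I^{\bar A}\delta\mathcal X=0$ and $\gamma I^{\bar A}_2-I^{\bar A}_1=a$ you get $\delta\widetilde Y=\gamma\pi\,\delta\widetilde P$. Moreover, $d\widetilde P$ contains no $d\widetilde X$ term, since a jump of $\widetilde X$ is offset by $\Delta\widetilde Y=-\gamma\Delta\widetilde X$. Hence $\delta\widetilde P$ solves the homogeneous version of \eqref{SDE:P*} with $\delta\widetilde P_t=0$, and therefore vanishes.

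Two points should be stated explicitly. First, $\mathscr A$ is convex, because the constraints are convex in $\widetilde X$ ($\widetilde P$ is affine in it); this makes the midpoint of two optimizers admissible. Second, a c\`adl\`ag process vanishing $ds\otimes d\mathbb P$-a.e.\ is indistinguishable from zero, which turns $\delta\widetilde Y=0$ a.e.\ into $\delta\widetilde X\equiv 0$.
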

\begin{proof}
	First, by monotone convergence theorem  we have the convergence of the first term in \eqref{cost-T}
	\begin{equation}\label{estimate:diff-cost-k-Part-1}
		\begin{aligned}
			\lim_{T\rightarrow\infty}\mathbb E_t\left[  \int_t^T   \frac{1}{a^{\alpha_s}_s} \left(  I_{s}^{A,\alpha_s}\mathcal X_{s}+  I^{B,\alpha_s}_s\right)^2\,ds \right]=\mathbb E_t\left[  \int_t^\infty   \frac{1}{a^{\alpha_s}_s} \left(  I_{s}^{A,\alpha_s}\mathcal X_{s}+  I^{B,\alpha_s}_s\right)^2\,ds \right].
		\end{aligned}
	\end{equation}
Second, due to \eqref{integrability:P},  Proposition~\ref{prop:A}, Lemma~\ref{lemma:gamma}, Theorem~\ref{thm:B} and Theorem \ref{thm:C}, we obtain the following convergence
\begin{equation}
	\begin{split}
		&~\left| \mathbb E_t\left[\left(\frac{1}{2}-\bar A_T^{\alpha_{T-}}\right) \gamma_T\widetilde{P}^2_{T} -\bar B_T^{\alpha_{T-}}\widetilde{P}_{T} -C^{\alpha_{T-}}_{T}\right] \right| \\
		\leq&~ ce^{-2\varphi T}\mathbb E_t\left[\sup_{s\ge t}e^{2 \varphi s}\gamma_s\widetilde{P}^2_{s} \right] + ce^{- \frac{\beta}{4} T}\|\bar B\|_{\infty,-\frac{\beta}{4}}\mathbb E_t\left[\sup_{s\ge t}e^{2 \varphi s} \gamma_s \widetilde{P}^2_{s}\right]^{1/2} \mathbb E_t\left[\sup_{s\ge t} \frac{1}{\gamma_s} \right]^{1/2}\\
		&~+ce^{-\frac{\beta T}{8} }\mathbb E_t\left[\sup_{s\ge t}e^{\frac{\beta s}{4} }\left|C_s\right|^2\right]^{1/2}\\
		\rightarrow&~0,\qquad \text{as }T\rightarrow\infty.
	\end{split}
\end{equation}
Therefore, the right hand side of \eqref{cost-T} converges to the right hand side of \eqref{eq:cost-inf}. It remains to verify the convergence of the left hand side.

To do so, note the following convergence
\begin{equation}\label{convergence:XY}
	\begin{split}
		&~\left| \mathbb E_t\left[\frac{1}{2} \gamma_T\widetilde{X}^2_{T-} +\widetilde{X}_{T-} \widetilde{Y}_{T-}\right] \right|  
        = \left| \mathbb E_t\left[ -\frac{1}{2}\gamma_T\widetilde{X}^2_{T-} +\gamma_T\widetilde{X}_{T-} \widetilde{P}_{T}\right] \right|\\
		\leq&~ ce^{-2\varphi T}\mathbb E_t\left[\sup_{s\ge t}e^{2 \varphi s}\gamma_s\widetilde{X}^2_{s} \right] + ce^{-2\varphi T}\mathbb E_t\left[\sup_{s\ge t}e^{2 \varphi s}\gamma_s\widetilde{X}^2_{s} \right]^{1/2} \mathbb E_t\left[\sup_{s\ge t}e^{2 \varphi s}\gamma_s\widetilde{P}^2_{s} \right]^{1/2}\\
		\rightarrow&~0,\qquad \text{as }T\rightarrow\infty.
	\end{split}
\end{equation}
From the integrability \eqref{integrability:X} and \eqref{integrability:P}, the supremum over $T\geq 0$ of all the stochastic integrals in \eqref{eq:cost-rewritten-1} is integrable.  By the dominated convergence theorem and \eqref{convergence:XY}, we get the convergence of the left hand side and thus the equality \eqref{eq:cost-inf}.
\end{proof}

\subsection{Existence of an optimal strategy}

We construct the candidate optimal strategy in the following order: 

$\bullet$ Define the  $\ell$-dimentional processes $\pi=(\pi_s)_{s\geq 0}$ in terms of $(\bar{A},Z^{\bar A})$ by
\[
	\pi^{i}= \frac{\gamma}{a^i}    \left(\frac{\lambda^{i}}{\gamma}+(\rho^{i} +\mu)\bar A^{i}+\sigma_{1} Z^{\bar A,i}\right) ,\quad  i\in \cal M.
\]
$\bullet$ In terms of $\pi$, define the process $\widetilde P^*=(\widetilde P^*)_{s\geq 0}$ as
\begin{equation}\label{SDE:P*}
	\left\{\begin{aligned}
		d\widetilde P^{*}_s=&~-\left(\frac{\phi}{2}+\left(\rho^{\alpha_s}_s+\mu_s-(\sigma_{1,s})^2\right)\pi^{\alpha_s}_s\right)\widetilde P^{*}_s\,ds\\
		&~-\left((\rho^{\alpha_s}_s+\mu_s-(\sigma_{1,s})^2)\eta^{\alpha_s}_s+\frac{\widetilde\sigma^{\alpha_s}_s\sigma_{1,s}}{\gamma_s}\right)\,ds \\
		&~-\left(\sigma_{1,s}\pi^{\alpha_s}_s\widetilde P^{*}_s+\sigma_{1,s}\eta^{\alpha_s}_s-\frac{\widetilde\sigma^{\alpha_s}_s}{\gamma_s} \right) \,dW_s,\\
		\widetilde P^{*}_0=&~x_0,
	\end{aligned}\right. 
\end{equation}
where
\[ 
	\eta^{i}_s:=\frac{1}{2a^{i}_s}  \left\{   \left(\rho_s^{i} +\mu_s-\sigma_{1,s}^2  \right) \bar B^{i}_{s}+2\sigma_{1,s}\widetilde\sigma_s^{i}\bar A_s^{i}-\sigma_{1,s}\widetilde\sigma_s^{i}   	 +\sigma_{1,s} Z^{\bar B,i}_s  	  \right\},\quad s\geq 0.
\] 
$\bullet$ In terms of $\widetilde P^*$ and $\pi$, define processes $\widetilde X^*$ and $\widetilde Y^*$ on $[0,\infty)$ as 
	\begin{equation}\label{optimal-state-inf}
	\begin{aligned}
		\widetilde X^{*}=\left(1-\pi^{\alpha}\right)  \widetilde P^{*} - \eta^{\alpha}   \quad \text{ and }\quad   \widetilde Y^{*}=\gamma\pi^{\alpha} \widetilde P^{*} + \gamma \eta^{\alpha},
	\end{aligned}
\end{equation}
$\bullet$ Recall the initial state $\mathcal X_{0-}=(x_0, y_0)^\top$ and let the initial regime be $i_0$. Define $\Delta\widetilde X^*_0$ as  
\begin{equation}\label{jump-inf}
	\Delta  \widetilde{X}^*_{0}= \frac{I^{\bar A,i_0}_0}{a^{i_0}_0} {\mathcal X}_{0-}+\frac{I^{\bar B,i_0}_0}{a^{i_0}_0},
\end{equation}
where $I^{\bar A}$ and $I^{\bar B}$ are given by \eqref{def:I-A} and \eqref{def:I-B}, respectively. 

The next theorem proves that $(\widetilde X^*)_{s\geq 0}$ together with $\Delta X_0^*$ is the unique optimal strategy of \eqref{cost-inf-continuous}-\eqref{state-inf-continuous} under the following additional assumption:

\begin{ass}\label{ass:existence-control}
	For some $0<K_\pi<L$, $ |\pi|^2\in H^{2,K_\pi}_{\text{BMO}}\bigcap  S^{2,0}$ and $\eta\in S^{4,-\varphi}$.

\end{ass}
\begin{remark}
Assumption~\ref{ass:existence-control} seems strong, but it plays a crucial role in verifying that \( \widetilde{X}^* \in \mathscr{A} \). Essentially, it requires that \( |\sigma_1 Z^{\bar{A}}|^2 \in H^{2,K_\pi}_{\text{BMO}}  \bigcap S^{2,0}   \) and that \( \sup_t |\sigma_{1,t} Z^{\bar{B}}_t| \) is integrable to the fourth power. A similar assumption is also made in \cite[Theorem 3.4 and Remark 3.5(b)]{AKU-2021}. These conditions are trivially satisfied when \( \sigma_1 = 0 \) or $(Z^{\bar A},Z^{\bar B})$ are bounded.
\end{remark}

\begin{theorem}\label{thm:existence-optimal-control}
	Let Assumption \ref{ass:standing} and Assumption \ref{ass:existence-control} hold. 
	\begin{itemize}
		\item[i)] The value function defined in Definition \ref{def:wellposed}, starting at $0$ for the infinite-horizon control problem \eqref{cost-inf-continuous}-\eqref{state-inf-continuous}, satisfies
		\begin{equation} \label{value-inf}
			V_0(\mathcal X_{0-},i_0)=\mathcal X^\top_{0-} A^{i_0}_0 \mathcal X_{0-}+\mathcal X^\top_{0-} B^{i_0}_0+C^{i_0}_0. 
		\end{equation}

	\item[ii)]	If $\widetilde X^*$ defined by \eqref{optimal-state-inf}-\eqref{jump-inf} is a c\`adl\`ag semimartingale, then the pair $( X^*_t, Y^*_t):=(e^{\frac{\phi}{2}t}\widetilde X^*_t, e^{\frac{\phi}{2}t} \widetilde Y^*_t)$ is the unique optimal control-state pair for the infinite-horizon control problem \eqref{cost-inf-continuous}-\eqref{state-inf-continuous}.

	\end{itemize}
\end{theorem}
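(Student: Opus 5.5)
The plan is to combine the reformulation of Theorem \ref{thm:reformulation-cost-inf} with an admissibility check for the candidate $\widetilde X^*$. By \eqref{eq:cost-inf}, for every $\widetilde X\in\mathscr A$,
\[
J(0,\mathcal X_{0-},i_0;\widetilde X)=\mathbb E\Big[\int_0^\infty \tfrac{1}{a^{\alpha_s}_s}\big(I^{\bar A,\alpha_s}_s\mathcal X_s+I^{\bar B,\alpha_s}_s\big)^2ds\Big]+\mathcal X^\top_{0-}A^{i_0}_0\mathcal X_{0-}+\mathcal X^\top_{0-}B^{i_0}_0+C^{i_0}_0 .
\]
Since $a>0$ (Corollary \ref{coro:gamma-a}), the first term is nonnegative. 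Hence $V_0\ge \mathcal X^\top_{0-}A^{i_0}_0\mathcal X_{0-}+\mathcal X^\top_{0-}B^{i_0}_0+C^{i_0}_0$.

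Equality in i), together with optimality in ii), follows once we exhibit an admissible strategy for which $I^{\bar A,\alpha}\mathcal X+I^{\bar B,\alpha}\equiv 0$ for a.e.\ $s$. Uniqueness then holds because any other optimal $\widetilde X$ must also kill the square. That forces the feedback relation, which together with the state dynamics determines $\widetilde P$ through the linear SDE \eqref{SDE:P*}, whose solution is pathwise unique.

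\textbf{Step 1: the candidate annihilates the square.} Using \eqref{eq:XAX-XB}, rewrite $I^{\bar A,i}\mathcal X+I^{\bar B,i}$ in terms of $(\widetilde P,\widetilde Y)$. A direct computation with the definitions of $a^i$, $\pi^i$, $\eta^i$ shows that it vanishes exactly when $\widetilde Y=\gamma\pi^{\alpha}\widetilde P+\gamma\eta^\alpha$, which is the relation \eqref{optimal-state-inf}. The initial jump \eqref{jump-inf} is chosen so that this relation holds at $0$.

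\textbf{Step 2: consistency of the dynamics.} Next we verify that the pair $(\widetilde X^*,\widetilde Y^*)$ solves the state equation \eqref{state-matrix}. The key point is that $\widetilde P=\widetilde X+\widetilde Y/\gamma$ obeys an equation with no $d\widetilde X$ term, since the $\gamma\,dX$ and $d[\gamma,X]$ contributions cancel after applying It\^o to $1/\gamma$. Substituting the feedback $\widetilde Y=\gamma(\pi^\alpha\widetilde P+\eta^\alpha)$ into that $\widetilde P$-equation yields exactly \eqref{SDE:P*}. So $\widetilde P^*$ is the hidden deviation of $\widetilde X^*$, and $\widetilde X^*$ is a semimartingale by the hypothesis in ii). Regime jumps of $\pi^\alpha,\eta^\alpha$ enter $\widetilde X^*$ but not $\widetilde P^*$, consistent with the remark that all jumps of $P$ come from $\alpha$.

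\textbf{Step 3: admissibility (main obstacle).} It remains to check \eqref{integrability:X} and \eqref{integrability:P} for $\widetilde X^*$. Equation \eqref{SDE:P*} is linear with unbounded stochastic coefficients $\pi^\alpha$ and $\sigma_1\pi^\alpha$. I would solve it by variation of constants:
\[
\widetilde P^*_t=\Lambda_t\Big(x_0-\int_0^t\Lambda_s^{-1}(\cdots)\,ds-\int_0^t\Lambda_s^{-1}(\cdots)\,dW_s\Big),
\]
where $\Lambda_t=\exp\{-\int_0^t(\frac{\phi}{2}+(\rho+\mu-\sigma_1^2)\pi^\alpha)\,ds\}\,\mathcal E(-\sigma_1\pi^\alpha)_t$. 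The bound on $\Lambda$ comes from three ingredients. First, Assumption \ref{ass:standing}(iii) gives $\rho+\mu-\sigma_1^2\ge0$ and $\pi\ge0$, so the drift part decays at least like $e^{-\phi t/2}$; the needed rate $\varphi<\epsilon$ should come from the third inequality in (iii). Second, $|\sigma_1|\le\hat ce^{-Lt}$ together with $|\pi|^2\in H^{2,K_\pi}_{\text{BMO}}$, $K_\pi<L$, makes $\sigma_1\pi$ a BMO integrand. The reverse H\"older inequality (Lemma \ref{lemma:reverse}) and Lemma \ref{lemma:zeta-BMO} then control moments of $\sup_t\mathcal E(-\sigma_1\pi^\alpha)_t$. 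Third, the inhomogeneous terms involve $\eta\in S^{4,-\varphi}$ and $\widetilde\sigma/\gamma$, handled by Assumption \ref{ass:standing}(ii) and Lemma \ref{lemma:gamma}; BDG and H\"older then give $\mathbb E[\sup_te^{4\varphi t}\gamma_t^2(\widetilde P^*_t)^4]<\infty$. Finally, $\widetilde X^*=(1-\pi^\alpha)\widetilde P^*-\eta^\alpha$ with $\pi\in S^{2,0}$ yields \eqref{integrability:X} via Cauchy--Schwarz.

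The delicate part is getting the exponent $\varphi$ uniformly, with $e^{\varphi t}$ absorbed by the decay of $\Lambda$ and the $\beta/8$ margin. This is where I expect most of the work to lie.
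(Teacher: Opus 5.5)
\textbf{Overall.} Your architecture for i) and ii) matches the paper's, and most of it is correct. The lower bound from \eqref{eq:cost-inf} is right. So is the reduction $I^{\bar A,i}\mathcal X=I^{\bar A,i}_1\widetilde P+\frac{a^i}{\gamma}\widetilde Y$ via $\gamma I^{\bar A}_2-I^{\bar A}_1=a$, which shows the feedback $\widetilde Y=\gamma(\pi^\alpha\widetilde P+\eta^\alpha)$ kills the square. The initial jump, the derivation of \eqref{SDE:P*} from the $d\widetilde X$-free equation for $\widetilde P$, and your uniqueness argument through pathwise uniqueness of \eqref{SDE:P*} are also fine. The gap is in Step~3, in two places.

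\textbf{First gap: $\pi\ge 0$ is false.} $\pi^i$ contains $\frac{\gamma}{a^i}\sigma_1 Z^{\bar A,i}$, which is unbounded and has no sign. Assumption \ref{ass:standing}(iii) together with $0\le\bar A\le\frac12$ only makes the remaining part $\frac{\gamma}{a^i}\big(\frac{\lambda^i}{\gamma}+(\rho^i+\mu)\bar A^i\big)$ nonnegative and bounded. So the drift factor of your $\Lambda$ has no deterministic decay.
\begin{itemize}
\item This is exactly why the paper introduces $\Gamma$. The sign-indefinite drift terms $\mu$ and $(\rho+\mu)\frac{\gamma}{a}\sigma_1Z^{\bar A}$ are bounded by $ce^{-Ls}(1+|Z^{\bar A}_s|)$, with $Z^{\bar A}\in H^{2,K_{\bar A}}_{\text{BMO}}$ and $K_{\bar A}<L$. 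They are absorbed into a factor whose running supremum, and that of its inverse, has all moments by Lemma \ref{lemma:Gamma}.
\item Only the remainder $\hat q\ge\frac{\phi}{2}+\frac{\lambda(\rho+\mu)}{a}>\varphi$ is used for dissipation.
\item In the $\widetilde P^*$ equation the coefficient is $\rho+\mu-\sigma_1^2$ rather than $\rho+\mu$. So the third inequality of (iii) holds there only up to the integrable error $\sigma_1^2\lambda/a$, which is one reason the paper works with $\gamma\widetilde P^*$.
\item Reverse H\"older alone bounds moments of $\mathcal E(-\sigma_1\pi^\alpha)$ only of order $q<\Phi^{-1}\big(\|\sigma_1\pi^\alpha\|_{\text{BMO}}\big)$, possibly close to $1$. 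The high moments you need come from the exponential moments of every order of $\int_0^\infty e^{-2Ls}|\pi_s|^2\,ds$. These follow from Lemma \ref{lemma:zeta-BMO} with $|\pi|^2\in H^{2,K_\pi}_{\text{BMO}}$, which is the content of Lemma \ref{lemma:Gamma}.
\end{itemize}

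\textbf{Second gap: the stochastic convolution.} ``BDG and H\"older'' does not handle $\Lambda_t\int_0^t\Lambda_s^{-1}(\cdots)\,dW_s$ in your variation-of-constants formula.
\begin{itemize}
\item The kernel $\Lambda_t\Lambda_s^{-1}$ is anticipating, so BDG cannot be applied to it directly.
\item The split $\sup_t e^{\varphi t}|\Lambda_t|\cdot\sup_t\big|\int_0^t\Lambda_s^{-1}(\cdots)\,dW_s\big|$ fails. $\Lambda_s^{-1}$ grows at the full rate $\frac{\phi}{2}+(\rho+\mu-\sigma_1^2)\pi$, which is only bounded above by a constant. This rate may exceed $\frac{\beta}{2}$, the decay rate of $\widetilde\sigma/\gamma$. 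For example, with $\sigma$ constant one has $\beta\le\phi$, while the rate exceeds $\frac{\phi}{2}$ whenever $\sigma_1=\mu=0$ and $\rho,\lambda/\gamma$ are bounded below by a positive constant. Then $\int_0^\infty\Lambda_s^{-2}(\widetilde\sigma_s/\gamma_s)^2\,ds$ diverges.
\end{itemize}

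\textbf{How the paper avoids this.} It does not use variation of constants.
\begin{enumerate}
\item For $\gamma\widetilde P^*$ it divides by $\hat{\mathcal E}=\mathcal E(\sigma_1(1-\pi^\alpha))$ to remove the multiplicative noise. Then $\Lambda=\frac{\Gamma}{\hat{\mathcal E}}\gamma\widetilde P^*$ has additive noise and drift $-\hat q\Lambda$.
\item It applies It\^o's formula to $e^{2\varphi t}\Lambda_t^2$ and drops the nonnegative term $2\int_0^t e^{2\varphi s}\Lambda_s^2(\hat q_s-\varphi)\,ds$.
\item BDG and Young, with $\varphi<\frac{\beta}{8}$ for the forcing terms and \eqref{estimate:Gamma/E}, close the estimate.
\item \eqref{estimate:E/Gamma} transfers the bound back to $\gamma\widetilde P^*$.
\end{enumerate}

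\textbf{Repairing your route.} If you want to keep variation of constants, proceed in three steps.
\begin{enumerate}
\item Pull out the multiplicative-noise factor and the sign-indefinite (or merely integrable) drift factor, evaluated at $t$ outside the integral and at $s$ inside it.
\item Treat the remaining dissipative finite-variation factor $G_t=\exp\{-\int_0^t(r_s-\varphi)\,ds\}$, with $r\ge\varphi$, by integration by parts:
$G_t\int_0^t G_s^{-1}\,d\widehat N_s=\widehat N_t-\int_0^t(r_s-\varphi)\frac{G_t}{G_s}\widehat N_s\,ds$. This is bounded by $2\sup_s|\widehat N_s|$.
\item Only then apply BDG, to the genuine martingale $\widehat N$.
\end{enumerate}
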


\begin{remark}
	If $\widetilde X^*$ is not a semimartingale, then the value function \eqref{value-inf} can only be approximated but cannot be reached by any admissible strategy in $\mathscr A$.
\end{remark}

The following lemma plays a crucial role in establishing the admissible integrability of \( \widetilde{X}^* \) and \( \widetilde{P}^* \).
\begin{lemma}\label{lemma:Gamma}
	Let $\zeta$ and $\theta$ be two processes such that $\zeta\in H^{2,K_{\zeta}}_{\text{BMO}}$ and $\theta^2\in H^{2,K_{\theta}}_{\text{BMO}}$ with $K_{\theta}<0$. For any $K>K_{\zeta}$ and $t\geq 0$ denote  
	$$\Gamma_t:=\exp\left\{\int^t_0e^{-Ks}\zeta_s \,ds+\int^t_0\theta_s\,dW_s\right\}.$$
	We have for any $p\ge 1$
	$$
	\mathbb E\left[\sup_{t\ge 0}|\Gamma_t|^p\right]<\infty.
	$$
\end{lemma}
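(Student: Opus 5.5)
The plan is to split $\Gamma_t=\Gamma^{(1)}_t\Gamma^{(2)}_t$, where
\[
\Gamma^{(1)}_t:=\exp\Big\{\int_0^t e^{-Ks}\zeta_s\,ds\Big\},\qquad \Gamma^{(2)}_t:=\exp\Big\{\int_0^t\theta_s\,dW_s\Big\}.
\]
By Cauchy--Schwarz,
\[
\mathbb E\big[\sup_t|\Gamma_t|^p\big]\le \mathbb E\big[\sup_t|\Gamma^{(1)}_t|^{2p}\big]^{1/2}\,\mathbb E\big[\sup_t|\Gamma^{(2)}_t|^{2p}\big]^{1/2}.
\]
It therefore suffices to bound each factor for every $p\ge 1$.

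\textbf{First factor.} We have $\sup_t|\Gamma^{(1)}_t|^{2p}\le \exp\{2p\int_0^\infty e^{-Ks}|\zeta_s|\,ds\}$. Applying Lemma \ref{lemma:zeta-BMO} with $b_1=1$, $b_3=K>K_\zeta$, $b_2=2p$, and $\tau=0$ gives
\[
\mathbb E\Big[\exp\Big\{2p\int_0^\infty e^{-Ks}|\zeta_s|\,ds\Big\}\Big]<\infty .
\]

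\textbf{Second factor.} Write
\[
(\Gamma^{(2)}_t)^{2p}=\mathcal E(4p\theta)_t^{1/2}\exp\Big\{(4p^2)\int_0^t\theta_s^2\,ds\Big\}^{1/2}.
\]
This follows from $2p\int\theta\,dW=\tfrac12\big(4p\int\theta\,dW-8p^2\int\theta^2\,ds\big)+4p^2\int\theta^2\,ds$. Taking the supremum and using Cauchy--Schwarz again,
\[
\mathbb E\big[\sup_t(\Gamma^{(2)}_t)^{2p}\big]\le \mathbb E\big[\sup_t\mathcal E(4p\theta)_t\big]^{1/2}\,\mathbb E\Big[\exp\Big\{4p^2\int_0^\infty\theta_s^2\,ds\Big\}\Big]^{1/2}.
\]
The second expectation is handled by Lemma \ref{lemma:zeta-BMO} applied to $\zeta=\theta^2\in H^{2,K_\theta}_{\text{BMO}}$ with $b_1=1$, $b_3=0>K_\theta$ (this is exactly where $K_\theta<0$ is used) and $b_2=4p^2$. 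This yields $\mathbb E[\exp\{b\int_0^\infty\theta^2\,ds\}]<\infty$ for every $b>0$.

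\textbf{Main obstacle.} The delicate point is the supremum of the stochastic exponential $\mathcal E(4p\theta)$. The finite exponential moments of every order of $\int_0^\infty\theta^2\,ds$ just obtained give Novikov's condition, so $\mathcal E(c\theta)$ is a uniformly integrable true martingale for every $c$. To control the supremum, I would apply Doob's maximal inequality at exponent $2$:
\[
\mathbb E\big[\sup_t\mathcal E(4p\theta)_t\big]\le \mathbb E\big[\sup_t\mathcal E(4p\theta)_t^2\big]^{1/2}\le 2\sup_t\mathbb E\big[\mathcal E(4p\theta)_t^2\big]^{1/2}.
\]
It then remains to bound $\sup_t\mathbb E[\mathcal E(4p\theta)_t^2]$. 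Write
\[
\mathcal E(4p\theta)^2=\mathcal E(8p\theta)\exp\Big\{16p^2\int\theta^2\,ds\Big\},
\]
and apply the same Cauchy--Schwarz splitting. The term $\mathcal E(16p\theta)$ is a true martingale with expectation $1$ at each $t$. The exponential term has finite expectation by Lemma \ref{lemma:zeta-BMO} with a larger $b_2$. This closes the argument uniformly in $t$, which completes the proof.
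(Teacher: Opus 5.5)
Your proof is correct and takes essentially the same route as the paper. Both arguments split off the drift part, write the Brownian part as a stochastic exponential times an exponential of $\int\theta_s^2\,ds$, and control the supremum of the stochastic exponential via Novikov, Doob's $L^2$ inequality and a further square-root splitting, with all exponential moments of $\int_0^\infty e^{-Ks}|\zeta_s|\,ds$ and $\int_0^\infty\theta_s^2\,ds$ coming from Lemma \ref{lemma:zeta-BMO} (using $b_3=0>K_\theta$ for the latter); the paper saves one splitting step by factoring $\Gamma^p=\exp\{p\int_0^\cdot e^{-Ks}\zeta_s\,ds+\frac{p^2}{2}\int_0^\cdot\theta_s^2\,ds\}\,\mathcal E(p\theta)$ at the outset. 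One harmless slip: the correct identity is $(\Gamma^{(2)}_t)^{2p}=\mathcal E(4p\theta)_t^{1/2}\exp\{4p^2\int_0^t\theta_s^2\,ds\}$, with no square root on the second factor, so the constant in your subsequent bound should be $8p^2$ rather than $4p^2$; this changes nothing, since every exponential moment is finite.
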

\begin{proof}
	By H\"older's inequality, we have
	\begin{equation*}
		\begin{aligned}
			\mathbb E\left[\sup_{t\ge 0}|\Gamma_t|^p\right]
			\leq \mathbb E\left[\sup_{t\ge 0}\exp\left\{2p\int^t_0e^{-Ks}\zeta_s\, \,ds+p^2\int^t_0\theta_s^2 \,ds\right\}\right]^{1/2}\mathbb E\left[\sup_{t\geq 0} L_t^2\right]^{1/2},
		\end{aligned}
	\end{equation*}
	where
	\[
	L_t:=\exp\left\{-\int^t_0\frac{(p\theta_s)^2}{2}\,ds+ \int^t_0 p\theta_s\,dW_s\right\}.
	\]
	Next, we prove the above two expectations are finite.
	
	First,	since $\zeta\in H^{2,K_{\zeta}}_{\text{BMO}}$ and $\theta^2\in H^{2,K_{\theta}}_{\text{BMO}}$, by H\"older's inequality and applying Lemma \ref{lemma:zeta-BMO} to $\zeta$ and $\theta^2$, we obtain 
	\[\begin{aligned}
		&~\mathbb E\left[\sup_{t\ge 0}\exp\left\{2p\int^t_0e^{-Ks}\zeta_s \,ds+p^2\int^t_0\theta_s^2 \,ds\right\}\right]\\
		\leq&~\mathbb E\left[\exp\left\{4p\int^\infty_0e^{-Ks}|\zeta_s| \,ds\right\}\right]^{1/2}\mathbb E\left[  \exp\left\{    2p^2\int^\infty_0\theta_s^2 \,ds \right\}   \right]^{1/2}\\
		<&~\infty.
	\end{aligned}\]
	Second,	by Novikov's criterion and Lemma \ref{lemma:zeta-BMO}, $L_s$ is a true martingale. It follows from Doob's maximal inequality and H\"older's inequality that
	\begin{align*}
			&~\mathbb E\left[\sup_{t\ge 0}\left(\exp\left\{-\int^t_0\frac{(p\theta_s)^2}{2}\,ds+\int^t_0p\theta_s\,dW_s\right\}\right)^2\right]\\
			\leq&~ 4\mathbb E\left[   \exp\left\{   -\int_0^\infty (p\theta_s)^2\,ds  +2 \int_0^\infty p\theta_s\,dW_s  \right\}      \right]\\
			\leq&~\mathbb E\left[\exp\left\{  4\int_0^\infty p\theta_s\,dW_s-8\int_0^\infty(p\theta_s)^2\,ds     \right\} 				   \right]^{1/2}  \mathbb E\left[  \exp\left\{  6p^2\int_0^\infty \theta^2_s\,ds     \right\}  \right]^{1/2}\\
			=&~	\mathbb E\left[  \exp\left\{  6p^2\int_0^\infty \theta^2_s\,ds     \right\}  \right]^{1/2} 	\\
			<&~\infty,
	\end{align*}
	where the last step is given by Lemma \ref{lemma:zeta-BMO}.
\end{proof}
Using Lemma~\ref{lemma:Gamma}, we can establish the integrability result stated in the next lemma.
\begin{lemma}
	Let Assumption \ref{ass:standing} and Assumption \ref{ass:existence-control} hold.
The processes $\widetilde P^*$ and $\widetilde X^*$ defined in \eqref{SDE:P*} and \eqref{optimal-state-inf} satisfy the integrability \eqref{integrability:X} and \eqref{integrability:P}.
\end{lemma}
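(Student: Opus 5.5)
The approach is to solve the linear SDE \eqref{SDE:P*} for $\widetilde P^*$ explicitly through a stochastic exponential, bound that exponential with Lemma~\ref{lemma:Gamma}, and then pass to $\widetilde X^*$ through \eqref{optimal-state-inf}.

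\textbf{Explicit representation of $\widetilde P^*$.} Write \eqref{SDE:P*} as $d\widetilde P^* = -(b_s\widetilde P^*_s + h_s)\,ds - (\sigma_{1,s}\pi^{\alpha_s}_s\widetilde P^*_s + k_s)\,dW_s$, where
\[
b=\tfrac{\phi}{2}+(\rho^{\alpha}+\mu-\sigma_1^2)\pi^{\alpha},\qquad h=(\rho^{\alpha}+\mu-\sigma_1^2)\eta^{\alpha}+\widetilde\sigma^{\alpha}\sigma_1/\gamma,\qquad k=\sigma_1\eta^{\alpha}-\widetilde\sigma^{\alpha}/\gamma.
\]
Introduce the fundamental solution
\[
\Gamma_t=\exp\Big\{-\int_0^t\big(b_s-\tfrac{\phi}{2}+\tfrac12\sigma_{1,s}^2(\pi^{\alpha_s}_s)^2\big)\,ds-\int_0^t\sigma_{1,s}\pi^{\alpha_s}_s\,dW_s\Big\}.
\]
Variation of constants then gives
\[
\widetilde P^*_t=e^{-\frac{\phi}{2}t}\Gamma_t\Big(x_0-\int_0^t e^{\frac{\phi}{2}s}\Gamma_s^{-1}\big(h_s-\sigma_{1,s}\pi_s k_s\big)\,ds-\int_0^t e^{\frac{\phi}{2}s}\Gamma_s^{-1}k_s\,dW_s\Big).
\]
There are no jumps to handle, because $\alpha$ only enters the coefficients.

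\textbf{Moment bounds for $\Gamma$ and $\Gamma^{-1}$.} Both should have all moments of their suprema, via Lemma~\ref{lemma:Gamma} with $\theta=\mp\sigma_1\pi$.
\begin{itemize}
\item By Assumption~\ref{ass:standing}(ii), $|\sigma_1\pi|^2\le \hat c^2e^{-2Ls}|\pi|^2$. Together with $|\pi|^2\in H^{2,K_\pi}_{\rm BMO}$ and $K_\pi<L$, this puts $\theta^2$ in $H^{2,K_\theta}_{\rm BMO}$ with $K_\theta=K_\pi-2L<0$.
\item The drift term contains $(\rho^\alpha+\mu-\sigma_1^2)\pi^\alpha$. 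Here I would use $\rho^\alpha+\mu-\sigma_1^2\ge0$ from Assumption~\ref{ass:standing}(iii), and also $\pi\ge0$, which should follow from $\bar A\ge0$ and the comparison results in Appendix~\ref{app:A}. Granting these, the drift is nonpositive in the exponent of $\Gamma$, so it can only help for $\Gamma$.
\item For $\Gamma^{-1}$ the drift term must be controlled. I would try to bound $\pi$ using $\pi\in S^{2,0}$ and the uniform bound on $\gamma/a$ from Corollary~\ref{coro:gamma-a}.
\end{itemize}

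\textbf{Main obstacle: discounting $\Gamma$ over infinite time.} The exponential $e^{2\varphi t}$ has to be beaten by something decaying. The factor $e^{-\frac{\phi}{2}t}\Gamma_t$ only contains $e^{-\frac{\phi}{2}t}$ plus a nonpositive drift. The drift bound in Assumption~\ref{ass:standing}(iii),
\[
\tfrac{\phi}{2}+(\rho+\mu-\sigma_1^2)\pi\ \ge\ \epsilon \quad\text{(the third inequality)},
\]
together with the identity for $\pi$, is where the rate $\epsilon>\varphi$ comes from. So I would instead write the exponent of $e^{-\frac{\phi}{2}t}\Gamma_t$ as $-\int_0^t b_s\,ds$ plus martingale terms, with $b\ge\epsilon$. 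This gives $e^{2\varphi t}|e^{-\frac{\phi}{2}t}\Gamma_t|^2\le e^{-2(\epsilon-\varphi)t}\cdot(\text{stochastic exponential part})$, and Lemma~\ref{lemma:Gamma} applies to the stochastic exponential part. Inside the integrals one gets ratios $\Gamma_t/\Gamma_s$, and $\int_s^t b\ge\epsilon(t-s)$ supplies the decay $e^{-\epsilon(t-s)}$.

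\textbf{Bounding the inhomogeneous terms.} The terms involving $h,k$ are bounded as follows.
\begin{itemize}
\item $\widetilde\sigma/\gamma$ carries $e^{-\beta s/2}$, and $1/\gamma$ has all moments by Lemma~\ref{lemma:gamma}.
\item $\eta\in S^{4,-\varphi}$ gives $|\eta_s|\lesssim e^{-\varphi s}$ in $L^4$.
\item $\sigma_1$ contributes $e^{-Ls}$.
\end{itemize}
Using BDG for the stochastic integral and H\"older with the moment bounds on $\Gamma$, I would aim for $\mathbb E[\sup_t e^{4\varphi t}|\widetilde P^*_t|^{4+\delta}]<\infty$. Since $\varphi<\beta/8$, the required power of the weight stays below the available decay $e^{-\beta s/2}$ or $e^{-\varphi s}$ after the convolution. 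Multiplying by $\gamma_t^2$, which has all moments of its supremum (its exponent has integrable coefficients), and applying H\"older gives \eqref{integrability:P}.

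\textbf{Integrability of $\widetilde X^*$.} Finally, $\widetilde X^*=(1-\pi^\alpha)\widetilde P^*-\eta^\alpha$ from \eqref{optimal-state-inf}. Combining $\pi\in S^{2,0}$, $\eta\in S^{4,-\varphi}$, the moment bound for $\gamma$ and Cauchy--Schwarz with the bound on $\widetilde P^*$ gives \eqref{integrability:X}.
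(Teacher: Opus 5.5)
Your variation-of-constants route differs from the paper's, but as written it has a genuine gap: it rests on a lower bound that is not available. You use $\pi\ge0$ and $b=\frac{\phi}{2}+(\rho^\alpha+\mu-\sigma_1^2)\pi^\alpha\ge\epsilon$, citing the third inequality of Assumption~\ref{ass:standing}(iii). Neither holds in general.

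\textbf{Sign of $\pi$.} We have $\pi^i=\frac{\gamma}{a^i}(\frac{\lambda^i}{\gamma}+(\rho^i+\mu)\bar A^i+\sigma_1Z^{\bar A,i})$. This contains $\frac{\gamma\sigma_1Z^{\bar A,i}}{a^i}$, which has no sign and is unbounded, so $\bar A\ge0$ does not give $\pi\ge0$.

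\textbf{What (iii) actually controls.} The third inequality bounds $\frac{\phi}{2}+\frac{(\lambda/\gamma)(\rho+\mu)}{\mu/2+\rho+\lambda/\gamma+\phi/2}$. It involves the factor $\rho+\mu$, not $\rho+\mu-\sigma_1^2$, and only the $\lambda$-part of $\pi$. This matches the drift $\mu-\frac{\phi}{2}-(\rho+\mu)\pi$ of $\gamma\widetilde P^*$, which is why the paper works with $\gamma\widetilde P^*$ rather than $\widetilde P^*$.

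\textbf{Missing idea.} Split $(\rho+\mu)\pi=(\rho+\mu)\frac{\gamma\sigma_1Z^{\bar A}}{a}+(\hat q-\frac{\phi}{2})$. The first part is $O(e^{-Ls}|Z^{\bar A}_s|)$; together with $\mu$ it goes into an exponential factor controlled by Lemmas~\ref{lemma:zeta-BMO} and~\ref{lemma:Gamma}, using $Z^{\bar A}\in H^{2,K_{\bar A}}_{\text{BMO}}$ with $K_{\bar A}<L$. For the remainder you need $0\le\bar A\le\frac12$, hence $a/\gamma\le\frac{\mu}{2}+\rho+\frac{\lambda}{\gamma}+\frac{\phi}{2}$, to get $\hat q\ge\epsilon>\varphi$. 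Carried out for your $b$, this only yields $b_s\ge\epsilon-ce^{-2Ls}-ce^{-Ls}|Z^{\bar A}_s|$. So $\int_s^t b_u\,du\ge\epsilon(t-s)$ holds only up to a random constant with exponential moments, not pathwise.

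\textbf{Stochastic term.} Your argument for the $dW$-integral does not work as stated.
\begin{enumerate}
\item Lemma~\ref{lemma:Gamma} gives nothing for $\sup_t\Gamma_t^{-1}$ with your $\Gamma$. The drift $\int_0^t(\rho+\mu-\sigma_1^2)\pi\,ds$ in its exponent is not of the decaying form $\int_0^te^{-Ks}\zeta_s\,ds$, and $\Gamma_t^{-1}$ typically grows exponentially.
\item Passing to ratios $\Gamma_t/\Gamma_s$ is fine in the $ds$-integrals, but not in $e^{-\frac{\phi}{2}t}\Gamma_t\int_0^te^{\frac{\phi}{2}s}\Gamma_s^{-1}k_s\,dW_s$. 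The $\mathcal F_t$-measurable factor $\Gamma_t$ cannot be moved inside the It\^o integral.
\item BDG for $\int_0^te^{\frac{\phi}{2}s}\Gamma_s^{-1}k_s\,dW_s$ followed by H\"older separates $e^{-\int_0^tb}$ from $e^{\int_0^sb}$. Their moments do not compensate, because $b$ is random (through $\alpha$ and $\pi$).
\end{enumerate}
The paper sidesteps this. It removes both the indeterminate drift and the linear diffusion coefficient $\sigma_1(1-\pi^\alpha)$ of $\gamma\widetilde P^*$, using the paper's own $\Gamma$ (different from yours) and $\hat{\mathcal E}$. The quotient $\Gamma/\hat{\mathcal E}$ and its inverse have all moments of their suprema by Lemma~\ref{lemma:Gamma}, since only decaying coefficients enter. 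Then $\Lambda=\frac{\Gamma}{\hat{\mathcal E}}\gamma\widetilde P^*$ has drift $-\hat q\Lambda$ plus source terms and additive noise. It\^o's formula for $e^{2\varphi t}\Lambda_t^2$, discarding $2\int_0^te^{2\varphi s}\Lambda_s^2(\hat q_s-\varphi)\,ds\ge0$, together with BDG and Young, gives $\mathbb E[\sup_te^{2p\varphi t}|\Lambda_t|^{2p}]<\infty$.

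Your final step from $\widetilde P^*$ to $\widetilde X^*$ is fine. Minor point: the $ds$-correction in your formula should be $h+\sigma_1\pi k$, not $h-\sigma_1\pi k$.
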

\begin{proof}
	We begin by deriving an estimate of $\gamma_s\widetilde P^*$, which satisfies the SDE: 
	\begin{equation*}
	\left\{\begin{aligned}
		d(\gamma_s\widetilde P^{*}_s)=&~ \left(\bar q ^{\alpha_s}_s\gamma_s\widetilde P^{*}_s -(\rho^{\alpha_s}_s+\mu_s)\gamma_s\eta^{\alpha_s}_s\right)\,ds +\Big(\sigma_{1,s}(1-\pi^{\alpha_s}_s)\gamma_s\widetilde P^{*}_s-\sigma_{1,s}\gamma_s\eta^{\alpha_s}_s+\widetilde\sigma_s^{\alpha_s}\Big)\,dW_s,\\
		\gamma_0\widetilde P^*_0=&~\gamma_0x_0,
	\end{aligned}\right. 
\end{equation*}
where
\begin{equation*}
	\begin{split}
		\bar q ^{\alpha_s}_s :=&~ \mu_s-\left(\frac{\phi}{2}+(\rho^{\alpha_s}_s+\mu_s)\pi_s^{\alpha_s}\right)\\
		=&~	\mu_s-(\rho_s^{\alpha_s}+\mu_s)\frac{ \gamma_s\sigma_{1,s} Z^{\bar A,\alpha_s}_s }{a_s^{\alpha_s}}	-\frac{\phi}{2}  - \frac{  \lambda_s^{\alpha_s}(\rho_s^{\alpha_s}+\mu_s)+\gamma_s(\rho_s^{\alpha_s}+\mu_s)^2\bar A_s^{\alpha_s} }{a^{\alpha_s}_s} 	\\
		:=&~ \mu_s-(\rho_s^{\alpha_s}+\mu_s)\frac{ \gamma_s\sigma_{1,s} Z^{\bar A,\alpha_s}_s }{a_s^{\alpha_s}} - \hat q^{\alpha_s}_s. 
	\end{split}
\end{equation*}
Note that the signs of the first two terms in \(\bar q^\alpha \) are indeterminate, and the coefficient of the linear term in the diffusion, that is, $\sigma_1 (1-\pi^\alpha)\gamma$, is unbounded. To estimate \( \gamma \widetilde{P}^* \), we first introduce two transformations that yield an SDE with a monotone driver and without unbounded linear term in the diffusion. 

To eliminate the first two terms of \(\bar q^\alpha \) from the driver of \( \gamma \widetilde{P}^* \), we consider the transformation \( \Gamma \gamma \widetilde{P}^* \), where
\[
   \Gamma_\cdot=\exp\left\{\int^\cdot_0\left(-\mu_s + \frac{(\rho^{\alpha_s}_s+\mu_s) \gamma_s \sigma_{1,s}Z_s^{\bar{A},
		\alpha_s}}{  a^{\alpha_s}_s  }\right)\,ds\right\}.
\]
It follows that 
\begin{equation*}
	d(\Gamma_s\gamma_s\widetilde P^*_s) =  \left(-\hat q ^{\alpha_s}_s  \Gamma_s \gamma_s\widetilde P^{*}_s -\Gamma_s(\rho^{\alpha_s}_s+\mu_s)\gamma_s\eta^{\alpha_s}_s\right)\,ds + \Big(\sigma_{1,s}(1-\pi^{\alpha_s}_s) \Gamma_s  \gamma_s\widetilde P^{*}_s-\Gamma_s\sigma_{1,s}\gamma_s\eta^{\alpha_s}_s+\Gamma_s\widetilde\sigma_s^{\alpha_s} \Big)\,dW_s.
\end{equation*}
%
Next, to eliminate the first term in the diffusion, we consider the transformation \( \Lambda := \frac{\Gamma}{\hat{\mathcal{E}}} \gamma \widetilde{P}^* \), where
\begin{align*}
	\hat{\mathcal E}:=\mathcal E\left(\sigma_{1}(1-\pi^{\alpha})\right).
\end{align*}
It follows that 
\begin{equation*}
	\begin{split}
		d\Lambda_s=&~- \hat q^{\alpha_s}_s \Lambda_s\,ds -\frac{\Gamma_s}{\hat{\mathcal E}_s}(\rho^{\alpha_s}_s+\mu_s)\gamma_s\eta^{\alpha_s}_s\,ds-\frac{\Gamma_s}{\hat{\mathcal E}_s}\sigma_{1,s}(1-\pi^{\alpha_s}_s)(-\sigma_{1,s}\gamma_s\eta^{\alpha_s}_s+\widetilde\sigma_s^{\alpha_s} )\,ds\\
		&~-\frac{\Gamma_s}{\hat{\mathcal E}_s} (\sigma_{1,s}\gamma_s\eta^{\alpha_s}_s-\widetilde\sigma_s^{\alpha_s})\,dW_s.
	\end{split}
\end{equation*}
For $\frac{\Gamma}{\hat{\cal E}}$, we have the following expression:
\begin{equation*}
	\frac{\Gamma}{\hat{\mathcal E}} = \exp\left( \int_0^t  \left\{ -\mu_s + \frac{(\rho^{\alpha_s}_s+\mu_s)\gamma_s\sigma_{1,s} Z^{\bar A,\alpha_s}_s			}{a^{\alpha_s}_s} 	+\frac{1}{2}\sigma_{1,s}^2(1-\pi_s)^2 \right\}\,ds - \int_0^t\sigma_{1,s}(1-\pi_s)\,dW_s   \right).
\end{equation*}
Recall $0<K_\pi <L$. Then for any constant $K_\theta$ such that $ K_\pi - 2L <K_\theta<0$, it holds that by Assumption \ref{ass:existence-control}
\begin{equation*}
	\begin{split}
		\esssup_{\tau,\omega}\mathbb E_\tau \left[ \int_\tau^\infty e^{-2K_\theta s} \{   \sigma_{1,s}^2(1-\pi_s)^2   \}^2\,ds  \right] \leq&~ c\esssup_{\tau,\omega}\mathbb E_\tau \left[ \int_\tau^\infty e^{-2K_\theta s-4Ls+2K_\pi s}   e^{-2K_\pi s } \{ (1-\pi_s)^2   \}^2\,ds  \right] \\
		 \leq&~ c\esssup_{\tau,\omega}\mathbb E_\tau \left[ \int_\tau^\infty   e^{-2K_\pi s } \{ (1-\pi_s)^2   \}^2\,ds  \right] <\infty.
	\end{split}
\end{equation*}
Moreover, by choosing $0<K_{\bar A}<L$ we have
\[
	\left|-\mu_s + \frac{(\rho^{\alpha_s}_s+\mu_s)\gamma_s\sigma_{1,s} Z^{\bar A,\alpha_s}_s			}{a^{\alpha_s}_s} 	+\frac{1}{2}\sigma_{1,s}^2(1-\pi_s)^2  \right|  \leq ce^{-Ls} \left\{ 1 + |Z^{\bar A,\alpha_s}_s|  + |\pi_s|^2			  \right\},
\]
where $1 +  Z^{\bar A,\alpha}   + |\pi|^2	\in H^{2,K_{\bar A}\vee K_\pi}_{\text{BMO}}$ and $K_\pi\vee K_{\bar A}<L$. Thus, $\frac{\Gamma}{\hat{\cal E}}$ satisfies all conditions in Lemma \ref{lemma:Gamma}, which implies that for each $p\geq 1$
\begin{equation}\label{estimate:Gamma/E}
	\begin{split}
		\mathbb E\left[ \sup_{t\geq 0}  \left(\frac{\Gamma_t}{\hat{\cal E}_t}  \right)^p \right]<\infty.
	\end{split}
\end{equation}
Similary, under the same requirement of $K_\pi$ and $K_{\bar A}$, we also have 
\begin{equation}\label{estimate:E/Gamma}
	\begin{split}
		\mathbb E\left[ \sup_{t\geq 0}  \left(\frac{ \hat{\cal E}_t}{\Gamma_t}   \right)^p \right]<\infty.
	\end{split}
\end{equation}
Note that $\hat q^i$ satisfies the following estimate:
\begin{align*}
	&~-\hat q^i\\
\leq&~ 	 	-\frac{\phi}{2}  - \frac{  \lambda_s^{\alpha_s}(\rho_s^{\alpha_s}+\mu_s)  }{ a^{\alpha_s}_s  } 					\qquad(\text{since }\bar A\text{ is positively valued})	\\ 		
\leq&~ 	 	-\frac{\phi}{2}  - \frac{  \lambda_s^{\alpha_s}(\rho_s^{\alpha_s}+\mu_s)  }{  \frac{\mu_s\gamma_s}{2} +\gamma_s\rho_s^{\alpha_s}+\lambda_s^{\alpha_s}+\frac{\phi\gamma_s}{2}    } 			\qquad(\text{since }\bar A\text{ is valued below }1/2)	\\ 		
=&~ 	-\frac{\phi}{2}  
- \frac{  \frac{\lambda_s^{\alpha_s}}{\gamma_s}(\rho_s^{\alpha_s}+\mu_s)  }{  \frac{\mu_s }{2} + \rho_s^{\alpha_s}+\frac{\lambda_s^{\alpha_s}}{\gamma_s}+\frac{\phi}{2}   }.
\end{align*}
 Recalling that by Assumption \ref{ass:standing} (iii) and the definition of $\varphi$, we have
	\[
		\frac{\phi}{2} + \min_i \frac{
		\frac{\lambda_s^{i}}{\gamma_s}(\rho_s^{i}+\mu_s) }{  \frac{\mu_s}{2} + \rho_s^{i} +
		\frac{\lambda_s^{i}}{\gamma_s} + \frac{\phi}{2}}-\varphi>0 \quad \text{ and } \quad  \frac{\beta}{8}-\varphi >0,
	\] 
	In particular, $\hat q^{i}-\varphi>0$ for all $i\in\cal M$.

	By It\^o's formula, it holds that 
	\begin{align*}
		&~e^{2\varphi t} \Lambda^2_t + \int^t_02e^{2\varphi s}\Lambda_s^2\left( \hat q^{\alpha_s}_s  -   \varphi  \right)\,ds \\
		=&~ \Lambda_0^2 - \int_0^t 2 e^{2\varphi s}\Lambda_s \frac{\Gamma_s}{\hat{\mathcal E}_s}\left\{   	(\rho^{\alpha_s}_s+\mu_s  )\gamma_s\eta_s^{\alpha_s}  + \sigma_{1,s}(1-\pi_s^{\alpha_s}) ( -\sigma_{1,s}\gamma_s\eta_s^{\alpha_s} + \widetilde\sigma^{\alpha_s}_s 			)  	\right\}\,ds \\
		&~+\int_0^t e^{2\varphi s}\left( \frac{\Gamma_s}{\hat{\mathcal E}_s}  \right)^2\left(  \sigma_{1,s}\gamma_s\eta_s^{\alpha_s} - \widetilde\sigma_s^{\alpha_s}		  \right)^2\,ds \\
		&~-\int_0^t   2e^{2\varphi s} \Lambda_s \frac{\Gamma_s}{\hat{\mathcal E}_s} (	\sigma_{1,s}\gamma_s\eta_s^{\alpha_s}  - \widetilde\sigma^{\alpha_s}_s		)\,dW_s.
	\end{align*}
Taking $\mathbb E \left[\sup_{t\ge 0}(\dots)^p\right]$ for any $p\ge 1$ on both sides, using $\hat q^i>\varphi$ and BDG's inequality in the first inequality, and using Young's inequality in the second one, we have
	\begin{align*}
		&~\mathbb E\left[\sup_{t\ge 0}e^{2p\varphi t}|\Lambda_t|^{2p}\right]\\
		\leq&~c\mathbb E\left[\Lambda_0^{2p}\right]+c\mathbb E\left[\left(\int^\infty_0 e^{2\varphi s}\left|\Lambda_s\Gamma_s\hat{\mathcal E}_s^{-1}(\rho^{\alpha_s}_s+\mu_s)\gamma_s\eta^{\alpha_s}_s\right|ds\right)^p\right]\\
		&~+c\mathbb E\left[\left(\int^\infty_0e^{2\varphi s}\left|\Lambda_s\Gamma_s\hat{\mathcal E}^{-1}_s\sigma_{1,s}(1-\pi^{\alpha_s}_s)(-\sigma_{1,s}\gamma_s\eta^{\alpha_s}_s+\widetilde\sigma_s^{\alpha_s} )\right|\,ds\right)^p\right]\\
		&~+c\mathbb E\left[  \left(	\int_0^\infty e^{2\varphi s}\left( \Gamma_s \hat{\mathcal E}_s^{-1}  \right)^2\left(  \sigma_{1,s}\gamma_s\eta_s^{\alpha_s} - \widetilde\sigma_s^{\alpha_s}	  \right)^2\,ds	  \right)^p     \right]\\
		&~+c\mathbb E\left[\left|\int^\infty_0\left\{  e^{2\varphi s}\Lambda_s\Gamma_s\hat{\mathcal E}_s^{-1}(\sigma_{1,s}\gamma_s\eta^{\alpha_s}_s-\widetilde\sigma_s^{\alpha_s} )\right\}^2\,ds\right|^{p/2}\right] 			\\
		\leq&~c \mathbb E\left[ \Lambda^{2p}_0\right]+\frac{1}{4}\mathbb E\left[\sup_{t\ge 0}e^{2p\varphi t}|\Lambda_t|^{2p}\right]+c\mathbb E\left[\left(\int^\infty_0e^{ \varphi s}\left|\Gamma_s\hat{\mathcal E}_s^{-1}(\rho^{\alpha_s}_s+\mu_s)\gamma_s\eta^{\alpha_s}_s\right|ds\right)^{2p}\right]\\
		&~+c\mathbb E\left[\left(\int^\infty_0e^{\varphi s}\left|\Gamma_s\hat{\mathcal E}_s^{-1}\sigma_{1,s}(1-\pi^{\alpha_s}_s)(-\sigma_{1,s}\gamma_s\eta^{\alpha_s}_s+\widetilde\sigma_s^{\alpha_s} )\right|ds\right)^{2p}\right]\\
		&~+c\mathbb E\left[  \left(	\int_0^\infty e^{2\varphi s}\left( \Gamma_s \hat{\mathcal E}_s^{-1}  \right)^2\left(  \sigma_{1,s}\gamma_s\eta_s^{\alpha_s} - \widetilde\sigma_s^{\alpha_s} 		  \right)^2\,ds	  \right)^p     \right]\\
		&~+c\mathbb E\left[\left(   \int^\infty_0\left\{  e^{\varphi s} \Gamma_s\hat{\mathcal E}_s^{-1}(\sigma_{1,s}\gamma_s\eta^{\alpha_s}_s-\widetilde\sigma_s^{\alpha_s} )\right\}^2\,ds\right)^{p}\right],
	\end{align*}
which further implies that by the assumption $\varphi<\frac{\beta}{8}$, \eqref{estimate:Gamma/E} and Corollary \eqref{coro:gamma-a}
\begin{align*}
		\mathbb E\left[\sup_{t\ge 0}e^{2p\varphi t}|\Lambda_t|^{2p}\right]   
	\leq&~c \mathbb E\left[\Lambda^{2p}_0 \right]+c\left( \|\bar B\|^{2p}_{\infty,-\frac{\beta}{4}} +1\right)   \mathbb E\left[\sup_{t\ge 0}\left|\Gamma_t\hat{\mathcal E}^{-1}_t\right|^{2p}  \left(\int_0^\infty e^{-(\frac{\beta}{4}-\varphi)s}   \,ds \right)^{2p}    \right]\\
	&~+	 	c\mathbb E\left[  \sup_{t\ge 0}\left|\Gamma_t\hat{\mathcal E}^{-1}_t\right|^{4p}  \right]^{1/2} \mathbb E\left[  \left(\int_0^\infty e^{-2(\frac{\beta}{8}-\varphi)s}    \,ds\right)^{2p}    \left(  \int_0^\infty e^{\frac{\beta}{4}s}	 | Z^{ \bar B}_s |^2  \,ds \right)^{2p}	\right]^{1/2}					\\
	&~+c\mathbb E\left[\sup_{t\ge 0}\left|\Gamma_t\hat{\mathcal E}^{-1}_t\right|^{2p}\left(\int^\infty_0e^{-(\frac{\beta}{4}-\varphi) s} ( e^{-Ls}|Z^{\bar A}_s|+1  )  \,ds\right)^{2p} \right]  \left( \|\bar B\|^{2p}_{\infty,-\frac{\beta}{4}} +1\right)  \\
	&~+ c\mathbb E\left[  \sup_{t\ge 0}\left|\Gamma_t\hat{\mathcal E}^{-1}_t\right|^{4p}   \right]^{1/2}	\mathbb E\left[ \left(\int_0^\infty e^{-Ls}|Z_s^{\bar A}|^2\,ds    \right)^{4p}  \right]^{1/4}	 \mathbb E\left[  \left(  \int_0^\infty	  e^{\frac{\beta}{4}s}| Z^{\bar B}_s |^2\,ds   \right)^{4p}  \right]^{1/4}	 \\
	&~+c\mathbb E\left[\sup_{t\ge 0}\left|\Gamma_t \hat{\mathcal E}^{-1}_t\right|^{2p}  \left( \int_0^\infty e^{-(\frac{\beta}{2}-2\varphi)s}  \,ds \right)^p		  \right] \left( \|\bar B\|^{2p}_{\infty,-\frac{\beta}{4}}  +1  \right)\\
	<&~\infty.
\end{align*}
Thus, it follows that by \eqref{estimate:E/Gamma}
		\[
		\mathbb E\left[\sup_{t\ge 0}\left| e^{\varphi t} \gamma_t {\widetilde{P}_t^{\ast}}  \right|^p\right]\leq \mathbb E\left[\sup_{t\ge 0}e^{2p\varphi t}|\Lambda_t|^{2p}\right]^\frac{1}{2}\mathbb E\left[\sup_{t\ge 0}\left|\Gamma^{-1}_t\hat{\mathcal E}_t\right|^{2p}\right]^\frac{1}{2}<\infty. 
		\]
By Lemma \ref{lemma:gamma} it follows that 
\[
	\mathbb E\left[\sup_{t\ge 0}e^{4\varphi t}\gamma_t^2(\widetilde{P}_t^{\ast})^4\right]\leq\mathbb E\left[\sup_{t\ge 0} \left| e^{\varphi t} \gamma_t {\widetilde{P}_t^{\ast}}  \right|^8\right]^\frac{1}{2}\mathbb E\left[\sup_{t\ge 0} \frac{1}{\gamma_t^4}\right]^\frac{1}{2}<\infty.
\]   
Finally, by \eqref{optimal-state-inf} and Assumption \ref{ass:existence-control} it holds
\begin{equation*}
	\begin{aligned}
		&~\mathbb E \left[ \sup_{t\geq 0} e^{2\varphi t}\gamma_t (\widetilde{X}_t^*)^2  \right] \\
		\leq&~ c \mathbb E \left[ \sup_{t\geq 0} e^{2\varphi t} \left(1-\pi^{\alpha_t}_t\right)^2 \gamma_t (\widetilde{P}_t^*)^2  \right]  + c \mathbb E \left[ \sup_{t\geq 0} e^{2\varphi t} \gamma_t(\eta^{\alpha_t}_t)^2  \right] \\
		\leq&~c \mathbb E \left[ \sup_{t\geq 0} e^{4\varphi t} \gamma_t^2 (\widetilde{P}_t^*)^4  \right]^{1/2}  \left(1+\mathbb E \left[ \sup_{t\geq 0} \left|\pi^{\alpha_t}_t\right|^4   \right]^{1/2} \right)+ c \mathbb E \left[ \sup_{t\geq 0}  \gamma_t^2 \right]^{1/2}\mathbb E \left[ \sup_{t\geq 0} e^{4\varphi t}(\eta^{\alpha_t}_t)^4  \right]^{1/2}	\\
		<&~ \infty.
	\end{aligned}
\end{equation*}
\end{proof}

\begin{proof}[Proof of Theorem \ref{thm:existence-optimal-control}]

	 By the identity \eqref{eq:cost-inf} in Theorem \ref{thm:reformulation-cost-inf}, it holds 
	\begin{equation*}
		\begin{split}
			\mathbb{E}_t\left[    \int_{t}^{\infty} e^{-\phi s}\left(-Y_{s-}\,dX_s + \frac{\gamma_s}{2}\,d[X]_s -\sigma^{\alpha_s}_s \,d[X,W]_s+\lambda^{\alpha_s}_s X_s^2\,ds \right)  \right]
			\geq\mathcal X^\top_{t-}A_t^{i}\mathcal X_{t-} + \mathcal X^\top_{t-}B^{i}_t+C^{i}_t.
		\end{split}
	\end{equation*}  
	Moreover, it is easy to check that $I^{\bar A,i}\mathcal X^{*}+I^{\bar B,i}=0$ for ${\mathcal X^{*}}:=(\widetilde X^*,\widetilde Y^*)^\top$ defined in \eqref{optimal-state-inf}, for any $i\in \cal M$. Thus, $\widetilde X^*$ in \eqref{optimal-state-inf} and \eqref{jump-inf} is optimal if it is a c\`adl\`ag semimartingale.

	It remains to verify the initial jump \eqref{jump-inf} is optimal. Note that after the initial jump, it should hold $I^{\bar A,i}_0\mathcal X_0+I^{\bar B,i}_0=0$, equivalently, we have 
		$$
		I^{\bar A,i}_{1,0} \widetilde X_0+I^{\bar A,i}_{2,0}\widetilde Y_0+I^{\bar B,i}_0=I^{\bar A,i}_{1,0} ( X_{0-}+\Delta X_0)+I^{\bar A,i}_{2,0}( Y_{0-}  + \Delta Y_0 )+I^{\bar B,i}_0=0,
		$$
		where $I^{\bar A}_1$ and $I^{\bar A}_2$ denote the first and the second components of $I^{\bar A}$. 
		 Note that $\Delta Y_0=-\gamma_0 \Delta X_0$ and that $\gamma I^{\bar A}_2-I^{\bar A}_1=a$. It holds that $\Delta X_0=\frac{1}{a_0^{\alpha_0}}\left(  I^{\bar A,\alpha_0}_0 \mathcal X_{0-} + I_0^{\bar B,\alpha_0}    \right)$.
\end{proof}

\section{Conclusion}
In this paper, we study a class of infinite‑horizon stochastic control problems with regime switching arising from optimal liquidation with semimartingale strategies. We characterize the value function and the optimal strategy through three systems of BSDEs defined on an infinite time horizon. Existence of solutions to these BSDE systems is established through the development of new analytical techniques involving BMO analysis and comparison principles for multidimensional BSDEs.
The analysis of the stochastic control problem proceeds in two steps. First, we establish the well‑posedness of the control problem by rewriting the cost functional with the aid of the BSDE characterization. Second, under additional assumptions, we prove the existence of an optimal control. 
 
 \begin{appendix}
 	\section{Results on $A$.}\label{app:A}

  By extending the proof in   \cite[Theorem 4.2]{Hu-COCV}, we obtain the following wellposedness result for $\bar A$.
 	\begin{proposition}\label{prop:A}
 		The system of infinite-horizon BSDEs \eqref{BSDE:A-inf} admits a solution  $(\bar A,Z^{\bar A})$ in $L^{\infty,0} \times H^{2,K_{\bar A}}_{BMO} $ for each $K_{\bar A}>0$. Moreover,  	$\bar A^{i}$ is valued in $[0,1/2]$ for each $i\in\mathcal M$.
 	\end{proposition}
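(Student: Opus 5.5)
We construct $\bar A$ as a limit of truncated finite-horizon solutions, in the spirit of \cite[Theorem 4.2]{Hu-COCV}, using the regime-switching comparison theorem \cite[Lemma 2.2]{Hu-Liang-Tang} for the bounds. For each $m\in\mathbb N$ we consider the system \eqref{BSDE:A-inf} on $[0,m]$ with terminal value $\bar A^{m,i}_m=0$. To control the quadratic term we truncate it. We replace $\bar A^i$ inside the square and inside $a^i$ by its projection onto $[0,1/2]$, and replace $\sigma_1 Z^{\bar A,i}$ by $h_n(\sigma_1 Z^{\bar A,i})$. This gives a Lipschitz driver on a finite horizon, so the truncated system has a unique solution in $S^{2}\times M^{2}$ by \cite[Theorem 4.1]{Briand03}. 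On the event where $\bar A\in[0,1/2]$, Assumption \ref{ass:standing}(iii) gives $a^i\ge \gamma(\frac{\mu}{2}-\frac{\sigma_1^2}{2}+\rho^i+\frac{\phi}{2})\ge \epsilon\gamma>0$. Hence the factor $\gamma/a^i$ is bounded by $1/\epsilon$, and the driver is well defined.

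Next we prove $0\le \bar A^{m,i}\le 1/2$ by comparison with constant supersolutions and subsolutions, treating $\sum_j q^{ij}\bar A^j$ as the monotone coupling.

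\textbf{Lower bound.} With $\bar A\equiv 0$ and $Z=0$, the driver equals
\[
\frac{\lambda^i}{\gamma}-\frac{\gamma}{a^i}\Big(\frac{\lambda^i}{\gamma}\Big)^2=\frac{\lambda^i}{\gamma}\Big(1-\frac{\lambda^i}{a^i}\Big).
\]
This is nonnegative because $a^i\ge\lambda^i$ when $\bar A=0$, using the second inequality in (iii) together with $\phi\ge0$. So $0$ is a subsolution.

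\textbf{Upper bound.} With $\bar A\equiv 1/2$, $a^i=\gamma(\frac{\mu}{2}+\rho^i+\frac{\phi}{2})+\lambda^i$. Writing $x=\frac{\mu+\rho^i}{2}+\frac{\lambda^i}{\gamma}$, the driver is $\frac{\mu-\phi}{2}+\frac{\lambda^i}{\gamma}-\frac{\gamma}{a^i}x^2$. Since $a^i/\gamma=x+\frac{\phi}{2}$, this equals
\[
x-\frac{\rho^i+\phi}{2}-\frac{x^2}{x+\phi/2}=\frac{x\phi/2}{x+\phi/2}-\frac{\rho^i+\phi}{2}.
\]
Its sign must be checked against (iii). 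If it is not directly $\le0$, we would instead use a time-dependent or $\epsilon$-adjusted barrier. I expect the sign check on this constant to be the delicate algebra, and I would first try the monotonicity of the driver in $\bar A$ at $Z=0$.

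Comparison then gives $0\le\bar A^{m,i}\le1/2$, which removes the projection. The remaining truncation in $Z$ is handled as follows. Since $\bar A^m$ is bounded, Itô's formula applied to $e^{-2K_{\bar A}t}(\bar A^{m,i})^2$, together with $\gamma/a^i\le 1/\epsilon$ and $|\sigma_1|\le \hat c e^{-Lt}$, yields a uniform bound on $\|Z^{\bar A,m}\|_{\text{BMO},K_{\bar A}}$ for every $K_{\bar A}>0$. The square is absorbed because the $Z$-quadratic part enters with a good sign, namely $-\frac{\gamma}{a}(\cdots)^2\le0$, and because $\sigma_1$ decays exponentially. A uniform BMO bound lets us drop $h_n$ by a standard localization argument.

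To pass to the limit $m\to\infty$, we use comparison again: extending $\bar A^{m}$ by $0$ after $m$, the family is monotone in $m$, since $\bar A^{m+1}_m\ge0=\bar A^m_m$. Therefore $\bar A^m\uparrow\bar A$ pointwise, with $\bar A$ valued in $[0,1/2]$. The $Z^m$ converge in $M^{2,K_{\bar A}}$ by an Itô estimate on the differences weighted with $e^{-2K_{\bar A}t}$. That estimate uses the uniform BMO bounds and the local Lipschitz property of the driver on the bounded set, after which we pass to the limit in the equation term by term as in Step 3 of Theorem \ref{thm:general-YZ}. The transversality condition \eqref{trans:A} is immediate from boundedness and $K_{\bar A}>0$. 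Membership $Z^{\bar A}\in H^{2,K_{\bar A}}_{BMO}$ follows by Fatou's lemma from the uniform BMO bounds. I expect the main obstacle to be this uniform BMO estimate combined with the convergence of $Z^m$ under the quadratic, non-Lipschitz $Z$-dependence.
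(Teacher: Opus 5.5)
Your architecture is the natural one and is what the paper's appeal to \cite[Theorem 4.2]{Hu-COCV} amounts to: finite horizon $[0,m]$ with zero terminal value, a Lipschitz truncation, barriers $0$ and $\frac12$ via the quasi-monotone comparison theorem, a uniform BMO bound, and a monotone limit in $m$. The BMO bound (from Itô's formula, using $\bar A^m\ge 0$ and the nonpositive quadratic term) and the transversality step are fine. The upper barrier, which you left open, works as it stands. At $\bar A\equiv\frac12$ one has $a^i/\gamma=x+d$ with $d:=\frac{\rho^i+\phi}{2}$, not $x+\frac{\phi}{2}$. Hence
\[
x-d-\frac{x^2}{x+d}=-\frac{d^2}{x+d}=-\frac{\gamma(\rho^i+\phi)^2}{4a^i}\le 0,
\]
and no adjusted barrier is needed. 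For the lower barrier, use the \emph{first} inequality in Assumption \ref{ass:standing}(iii), which gives $a^i\ge\lambda^i+\epsilon\gamma$ at $\bar A=0$. The second inequality together with $\phi\ge 0$ does not control the sign of $\rho^i+\phi$.

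The genuine gap is the convergence of the $Z$-components, both when you drop $h_n$ and when $m\to\infty$. For a driver that is quadratic in $Z$, a uniform BMO bound gives boundedness, not convergence, and no ``localization'' removes a $Z$-truncation. Consider the weighted Itô estimate for $\delta\bar A=\bar A^{m'}-\bar A^m$. The driver difference contains a term of order $|\sigma_1\delta Z|\,|\sigma_1|(|Z^m|+|Z^{m'}|)$ from the square, and a term of order $\sigma_1^2|\delta\bar A|\,|\sigma_1Z^m|^2$ from the $\bar A$-dependence of $\gamma/a^i$. After Young's inequality you would need
\[
\mathbb E\Big[\int_0^\infty e^{-2K_{\bar A}s}\sigma_{1,s}^2|\delta\bar A_s|^2\big(|Z^m_s|^2+|Z^{m'}_s|^2\big)\,ds\Big]\longrightarrow 0 .
\]
Pointwise monotone convergence of $\bar A^m$ and uniform BMO bounds do not give this. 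You have no uniform-in-time convergence of $\bar A^m$; Dini's theorem would need continuity of the limit, which is what you are proving. You also have no uniform integrability of $|Z^{m'}|^2$ in $ds\otimes d\mathbb P$. Moreover, $h_n$ does not order the approximations in $n$ at all.

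What is missing is a stability theorem for quadratic BSDEs. One way to close the gap:
\begin{itemize}
\item Write $-\frac{\gamma}{a^i}u^2=\inf_{v\in\mathbb R}\{\frac{a^i}{\gamma}v^2+2uv\}$ with $u=(\mu+\rho^i)\bar A^i+\sigma_1Z^i+\frac{\lambda^i}{\gamma}$, and restrict to $|v|\le n$ instead of using $h_n$.
\item The resulting drivers are Lipschitz, decrease in $n$, and satisfy $|F^n|\le c(1+|z|^2)$ uniformly in $n$.
\item The quadratic part stays $\le 0$, so your BMO bound survives.
\item $0$ remains a subsolution for every $n$, and $\frac12$ a supersolution for $n\ge 2\hat c/\epsilon$, since the minimizer at $(\frac12,0)$ is bounded.
\item Comparison at this Lipschitz level then orders $\bar A^{m,n}$ both in $n$ and in $m$.
\item Apply Kobylanski's monotone stability theorem componentwise, treating $\sum_{j\ne i}q^{ij}\bar A^{m,n,j}$ as a bounded, a.e.\ convergent source. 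Its proof uses Itô's formula for an exponential function of the difference precisely to absorb the $|Z|^2$ terms above.
\item Do this first on $[0,m]$ as $n\to\infty$, then on each $[0,N]$ as $m\to\infty$.
\item The tail $[N,\infty)$ in $M^{2,K_{\bar A}}$ is uniformly small, by your BMO bound with any exponent $0<K'<K_{\bar A}$.
\end{itemize}
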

 	\begin{corollary}\label{coro:gamma-a}
 	Recall $a^{i}$ is defined in \eqref{def:a-i}. It satisfies $0<\epsilon\leq \frac{a^{i}}{\gamma}\leq c$. 
 	\end{corollary}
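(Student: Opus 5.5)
The plan is to divide the definition \eqref{def:a-i} of $a^i$ by $\gamma>0$, which gives the identity
\[
\frac{a^i_t}{\gamma_t}=\sigma_{1,t}^2\bar A^i_t+\frac{\mu_t}{2}-\frac{\sigma_{1,t}^2}{2}+\rho^i_t+\frac{\lambda^i_t}{\gamma_t}+\frac{\phi}{2}.
\]
Then I bound each term from below and above using Proposition~\ref{prop:A} and Assumption~\ref{ass:standing}. Division by $\gamma_t$ is legitimate because, by \eqref{eq:gamma}, $\gamma$ is a stochastic exponential started at $\gamma_0>0$, so $\gamma_t>0$ for all $t$.

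For the lower bound, Proposition~\ref{prop:A} gives $\bar A^i\in[0,1/2]$, so $\sigma_{1,t}^2\bar A^i_t\ge 0$. Assumption~\ref{ass:standing}(i) gives $\lambda^i_t/\gamma_t\ge 0$. Dropping these two nonnegative terms leaves
\[
\frac{a^i_t}{\gamma_t}\ge \frac{\phi}{2}+\rho^i_t+\frac{\mu_t}{2}-\frac{\sigma_{1,t}^2}{2}\ge\epsilon,
\]
where the last inequality is exactly the first condition in Assumption~\ref{ass:standing}(iii). This also gives $a^i>0$.

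For the upper bound, I use $\bar A^i\le 1/2$ together with the bounds in Assumption~\ref{ass:standing}: $|\sigma_{1,t}|\le\hat c e^{-Lt}\le\hat c$, $|\mu_t|\le\hat c$, $|\rho^i_t|\le\hat c$ and $\lambda^i_t/\gamma_t\le\hat c$. Since the term $-\sigma_{1,t}^2/2$ is nonpositive, this yields
\[
\frac{a^i_t}{\gamma_t}\le \frac{\hat c^2}{2}+\frac{\hat c}{2}+2\hat c+\frac{\phi}{2}=:c,
\]
a constant independent of $(t,\omega,i)$.

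No step is a genuine obstacle. The only nontrivial input is the a priori range $\bar A^i\in[0,1/2]$ from Proposition~\ref{prop:A}. Without it, the term $\sigma_1^2\bar A^i$ could be negative or unbounded, and both bounds would fail.
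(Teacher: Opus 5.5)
Your proof is correct and follows essentially the paper's argument: divide by $\gamma>0$, drop the nonnegative terms $\sigma_1^2\bar A^i$ and $\lambda^i/\gamma$ to reduce the lower bound to Assumption~\ref{ass:standing}(iii), and combine $\bar A^i\le 1/2$ with the coefficient bounds for the upper bound. The only cosmetic difference is in the upper bound: the paper groups $\sigma_1^2\bar A^i-\frac{\sigma_1^2}{2}=\sigma_1^2(\bar A^i-\frac12)\le 0$ and drops it, whereas you bound $\sigma_1^2\bar A^i$ by $\hat c^2/2$ separately.
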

 	\begin{proof}
 		By definition, we have
 			\[
 				\frac{a^{i}}{\gamma} = \sigma_1^2\bar A^{i} +\frac{\mu}{2}-\frac{\sigma_1^2}{2} +\rho^i+\frac{\phi}{2}+\frac{\lambda^i}{\gamma}.
 		\]
 		By Proposition \ref{prop:A}, Assumption \ref{ass:standing} (i) and (iii), we have $\frac{a^{i}}{\gamma}\geq \epsilon$. By Proposition \ref{prop:A} and Assumption \ref{ass:standing} (i) and (ii), 
 		$$
 			 \sigma_1^2\bar A^{i} +\frac{\mu}{2}-\frac{\sigma_1^2}{2} +\rho^i+\frac{\phi}{2}+\frac{\lambda^i}{\gamma} \leq \frac{\mu}{2}  +\rho^i+\frac{\phi}{2}+\frac{\lambda^i}{\gamma}\leq c<\infty.
 		$$ 
 	\end{proof}

 \end{appendix}

\bibliographystyle{plain}
\bibliography{bib_CFX2026}

\end{document}